\documentclass[final,leqno,onetabnum]{siamltex704}

% \PassOptionsToPackage{sort}{natbib} %Does not do anything  

\usepackage{amsmath,amssymb} % for \begin{bmatrix}, \begin{smallmatrix}, \begin{align} etc.
\usepackage{amsfonts} % for \mathbb{} etc. `blackboard' for uppercase only.
\usepackage{exscale}
\usepackage{graphicx}
\usepackage{epstopdf}  
\usepackage{epsfig}

\usepackage[%
  breaklinks=true,%
  colorlinks=true,%
  linkcolor=blue,anchorcolor=blue,%
  citecolor=blue,filecolor=blue,%
  menucolor=blue,
  urlcolor=blue]{hyperref}
\usepackage{url,doi}

\renewcommand{\ldots}{\ensuremath{\dotsc}}
\newcommand{\comment}[1]{}

\newcommand{\R}{\mathbb{R}} % reals 
 % range
\newcommand{\C}{\mathbb{C}} % complex 

\def\Z{{\mathcal Z}}

\newcommand{\X}{\mathcal{X}}
\newcommand{\Y}{\mathcal{Y}}
\newcommand{\F}{\mathcal{F}}
\newcommand{\G}{\mathcal{G}}
\newcommand{\XP}{{\mathcal{X}^\perp}}
\newcommand{\YP}{{\mathcal{Y}^\perp}}
\newcommand{\QX}{X}
\newcommand{\QXP}{X_\perp}
\newcommand{\QY}{Y}
\newcommand{\QYP}{Y_\perp}

\newcommand{\PP}{\mathcal{P}}
\newcommand{\QQ}{\mathcal{Q}}
\newcommand{\lp}{\left(}
\newcommand{\rp}{\right)}

\def\span{{\rm span}}
\def\dim{{{\rm dim}}}
\def\max{{{\rm max}}}
\def\min{{{\rm min}}}

\newcommand{\re}{\mathop{\mathrm{re}}}
\newcommand{\pol}{\mathop{\mathrm{pol}}}

\newtheorem{thm}{Theorem}[section]
\newtheorem{cor}[thm]{Corollary}

\newtheorem{conjecture}[thm]{Conjecture}
\newtheorem{defin}[thm]{Definition}

\newtheorem{remark}[thm]{Remark}

\title{Rayleigh-Ritz majorization error bounds\\ of the mixed type
\thanks{%
%Published electronically \today.
A preliminary version is posted at arXiv \cite{kz16}. 
%\URL simax/?/?.html
%This material is based upon work partially supported by the National Science Foundation under Grant No. 1115734. 
}}

\author{%
Peizhen Zhu\footnotemark[2]\ \footnotemark[3]
\and
Andrew V. Knyazev\footnotemark[4]\ \footnotemark[5]\ \footnotemark[6]
}

\begin{document}
\vspace{-1.2in}
%\slugger{simax}{2012}{00}{0}{000-000} %used for `SIAM etc' at the top.
\vspace{.9in}

\setcounter{page}{1}
\maketitle

\renewcommand{\thefootnote}{\fnsymbol{footnote}}
\footnotetext[2]{Department of Mathematical \& Statistical; 
Missouri University of Science and Technology,
202 Rolla Building, 400 W. 12th St., Rolla, MO 65409-0020
}
\footnotetext[3]{zhupe[at]mst.edu}
\footnotetext[4]{Mitsubishi Electric Research Laboratories; 201 Broadway
Cambridge, MA 02139}
\footnotetext[5]{knyazev[at]merl.com}
\footnotetext[6]{\url{http://www.merl.com/people/knyazev}
}
\renewcommand{\thefootnote}{\arabic{footnote}}

\begin{abstract}
The absolute change in the Rayleigh quotient (RQ)  for a Hermitian matrix with respect to vectors is bounded in
terms of the norms of the residual vectors and the angle between vectors in [\doi{10.1137/120884468}]. We substitute multidimensional subspaces for the vectors and derive new bounds of absolute changes of eigenvalues of the matrix RQ in terms of singular values of residual matrices and principal angles between subspaces, using majorization.
We show how our results relate to bounds for eigenvalues after discarding off-diagonal blocks
or additive perturbations.

\end{abstract}
\begin{keywords}
Rayleigh, Ritz, majorization, angles, subspaces, eigenvalue, Hermitian, matrix
\end{keywords}
\begin{AM}
  15A03, 15A18, 15A42, 15B57.
\end{AM}

%\begin{DOI}
% Place for Digital Object Identifier, to get an idea of the final spacing.   %10.1137/050630416
%\end{DOI}

\pagestyle{myheadings}
\thispagestyle{plain}
\markboth{PEIZHEN ZHU and ANDREW V. KNYAZEV}
{Rayleigh-Ritz majorization mixed error bounds} %50 Characters Limit

\section{Introduction}\label{intro}
In this work, we continue our decades-long 
investigation, see \cite{akpp08,k85,k86,k87,ka06a,ka06,ka10,kja10,ko06,zhu2013}, of sensitivity of the Rayleigh quotient (RQ)  and the Ritz values (the eigenvalues of the matrix RQ)  with respect to changes in vectors or subspaces, and closely related error bounds for accuracy of eigenvalue approximations by the Rayleigh-Ritz (RR)  method, for  Hermitian matrices and operators. 

There are two main types of bounds of absolute changes of the Ritz values: \emph{a priori} and \emph{a posteriori} bounds, as classified, e.g.,\ in  \cite{zhu2013}.
The~a~priori bounds, e.g.,\ presented in \cite{ka10},
are in terms of principal angles between subspaces (PABS) , which may not be always readily available in practice. 
The~a~posteriori bounds, e.g.,\ presented in \cite{bhatia_book,Mathias98,parlett80,sunste}, are based on easily computable singular values of residual matrices.  
Our  bounds in this work use both PABS and the singular values of the residual matrices, thus called \emph{mixed bounds}, following \cite{zhu2013}. 

Different vectors/subspaces may have the same RQ/Ritz values, but if the 
values are different, the vectors/subspaces cannot be the same, evidently.  
A priori and mixed bounds can be used to differentiate vectors/subspaces, providing guaranteed lower bounds
for the angles. In \cite{ka06}, this idea is illustrated for graph matching, by comparing spectra of graph Laplacians of the graphs to be matched. Our bounds can be similarly applied for signal distinction in signal processing, where the Ritz values serve as a harmonic signature of the subspace; cf. \emph{Star Trek subspace signature}.
Furthermore, the RQ/Ritz values are computed independently for every vector/subspace in a pair, thus, also suitable for distributed or privacy-preserving data mining, while determining the angles requires both vectors/subspaces in a pair to be available.

The rest of this paper is organized as follows. Section \ref{sec:mc} motivates 
our work and contains our conjecture.
In Section \ref{sec:majorization}, we formally introduce the notation, define majorization, and formulate PABS.
 %We review the perturbation of eigenspaces
%and  present the relation of the residual matrix to the tangents of PABS 
 %in Section~\ref{sec:ch6.2}. 
Section \ref{sec:ch6.3} contains our main results---mixed bounds of 
the absolute changes of the Ritz values in terms of PABS and the singular values of the residual matrices by using weak majorization. In Section \ref{sec:discussion},  we compare our mixed majorization bounds with those known and relate to eigenvalue perturbations.

\section{Motivation and Conjectures}\label{sec:mc}
For a nonzero vector $x$ and a Hermitian matrix $A$, RQ, defined by
$\rho\lp x\rp =\left\langle x, Ax\right\rangle/\left\langle x,x\right\rangle$, where $\left\langle\cdot,\cdot \right\rangle$ is an inner product, associated with a norm by $\|\cdot\|^2=\left\langle\cdot,\cdot\right\rangle$, is typically used as an approximation to some eigenvalue of $A$. The corresponding residual vector is denoted by $r\lp x\rp =Ax-\rho\lp x\rp x$.  

Let $x$ and $y$ be unit vectors. The angle $\theta\lp x,y\rp $ between them is defined by $\cos\theta\lp x,y\rp =\left| \langle x,y \rangle \right|$. 
In  \cite[Theorem 3.7 and Remark 3.6]{zhu2013}, the absolute change in RQ with respect to the vectors $x$ and $y$, not orthogonal to each other,  is bounded as follows,
\begin{eqnarray}\label{eqn:chap6_1}
|\rho\lp x\rp -\rho\lp y\rp |&\leq&\frac{\|P_\Y\, r\lp x\rp \|+\|P_\X\, r\lp y\rp \|}{\cos\theta\lp x, y\rp }\\
&=& \lp  \|P_{\X+\Y}\, r\lp x\rp \|+\|P_ {\X+\Y}\, r\lp y\rp \|\rp \tan\theta\lp x, y\rp , \nonumber
\end{eqnarray}
where  $P_\X$ and $P_\Y$ are orthogonal projectors on the one-dimensional subspaces $\X$ and $\Y$ spanned by the vectors $x$ and $y$, correspondingly, and 
$P_{{\X+\Y}}$ is the orthogonal projector on the subspace spanned by vectors $x$ and $y$.
If the vector $x$ is an eigenvector of $A$, the RQ of $x$ is an eigenvalue of~$A$, and \eqref{eqn:chap6_1} turns into the following equalities, 
\begin{equation}\label{eqn:chap6_2}
|\rho\lp x\rp  -\rho\lp y\rp |=\frac{\|P_\X\, r\lp y\rp \|}{\cos\theta\lp x, y\rp }=\|P_ {\X+\Y}\, r\lp y\rp \|\tan\theta\lp x, y\rp ,
\end{equation}
since $r\lp x\rp =0$; i.e. the absolute change  $|\rho\lp x\rp -\rho\lp y\rp |$ of the RQ
becomes in \eqref{eqn:chap6_2} the absolute error in the eigenvalue $\rho\lp x\rp $ of $A$; cf. \cite{knyazev97,sun91,zhu2013}. 

It is elucidative to examine bounds \eqref{eqn:chap6_1} and \eqref{eqn:chap6_2} in a context of an asymptotic Taylor expansion of the RQ at the vector $y$ with the expansion center at the vector~$x$. If $x$ is  an eigenvector of $A$, then $r\lp x\rp =0$, the gradient of the RQ at $x$, and thus the first order term in the Taylor expansion vanishes. This implies that the RQ behaves as a quadratic function in a vicinity of the eigenvector $x$, e.g., giving a second order bound for the  absolute change  $|\rho\lp x\rp -\rho\lp y\rp |$ of the RQ, as captured by \eqref{eqn:chap6_2} as well as by the following a priori bound from \cite[Theorem 4]{ka06a},
\begin{equation}\label{eqn:2order}
|\rho\lp x\rp -\rho\lp y\rp |=\left[\lambda_{\max}-\lambda_{\min}\right]\sin^2\theta\lp x, y\rp ,
\end{equation}
where $\lambda_{\max}\geq \lambda_{\min}$ are the two eigenvalues of the projected matrix $P_{\span\lp x\rp +\span\lp y\rp }A$ restricted to 
its invariant two-dimensional subspace $\span\lp x\rp +\span\lp y\rp $, which is the range of the orthogonal projector $P_{\span\lp x\rp +\span\lp y\rp }$.

If $x$ is not an eigenvector of $A$, then the gradient of the RQ at $x$ is not zero, and one can obtain only a first order bound, e.g., as in \cite[Theorem 1 and Remark 3]{ka06a},
\begin{equation}\label{eqn:1order}
|\rho\lp x\rp -\rho\lp y\rp |=\left[\lambda_{\max}-\lambda_{\min}\right]\sin\theta\lp x, y\rp .
\end{equation}
 
If the vector $x$ moves toward an eigenvector of $A$, the first order term in the Taylor expansion gets smaller, so a desired first order bound can be expected to gradually turn into a second order one, which bound \eqref{eqn:chap6_1} demonstrates turning into \eqref{eqn:chap6_2}, in contrast to autonomous a priori bounds \eqref{eqn:2order} and \eqref{eqn:1order}.

In this paper, we substitute finite dimensional subspaces $\X$ and $\Y$ of the same dimension
for one-dimensional subspaces spanned by the vectors $x$ and $y$ and, thus, generalize bounds \eqref{eqn:chap6_1} and \eqref{eqn:chap6_2} to the multi-dimensional case.
An extension of RQ to the multi-dimensional case is provided by the Rayleigh-Ritz (RR)  method; see, e.g.,\ \cite{{parlett80},{sunste}}.
Specifically, let columns of the matrix $\QX$ form an orthonormal basis for the subspace~$\X$. 
The matrix RQ associated with the matrix $\QX$ is defined by
$\rho\lp X\rp =X^HAX,$
and the corresponding residual matrix is $R_X=AX-X\rho\lp X\rp.$ 

Since the matrix $A$ is Hermitian, the matrix RQ $\rho\lp X\rp $ is also Hermitian. 
The eigenvalues of $\rho\lp X\rp $  do not depend on the particular choice of the basis $X$ of the subspace $\X$,
and are commonly called ``Ritz values'' of the matrix $A$ corresponding to the subspace $\X$.
If~the subspace $\X$ is $A$-invariant, i.e. $A\X \subset \X$, then $R_X=0$ and all eigenvalues of $\rho\lp X\rp $ are also the 
eigenvalues of $A$.

Our goal is bounding changes in Ritz values where the subspace varies. Particularly, we bound the 
differences of eigenvalues of Hermitian matrices $\rho\lp X\rp =X^HAX$ and $\rho\lp Y\rp =Y^HAY$, where the columns of the  matrices $X$ and  $Y$ form orthonormal bases for  subspaces $\X$ and $\Y$, correspondingly.
 In particular, if the subspace $\X$ is an invariant subspace of $A$, 
then the changes of eigenvalues of Hermitian matrices $X^HAX$ and $Y^HAY$ represent  approximation errors of eigenvalues of the Hermitian matrix $A$.
We generalize \eqref{eqn:chap6_1} and \eqref{eqn:chap6_2} to the multidimensional setting using 
majorization, see \cite{bhatia_book,{marshall79}}, as stated in the following conjecture.
\begin{conjecture}\label{thm:conjecture}
Let columns of matrices $X$ and $Y$ form orthonormal bases for the subspaces $\X$ and $\Y$ 
with $\dim\lp \X\rp =\dim\lp \Y\rp $, correspondingly.
  Let $A$ be a Hermitian matrix and $\Theta\lp \X,\Y\rp <\pi/2$. Then 
\begin{eqnarray}\label{eqn:conjecture}
\left|\Lambda\lp X^HAX\rp -\Lambda\lp Y^HAY\rp \right|&\prec_{w}& 
 \frac{S\lp P_\Y R_X\rp +S\lp P_\X R_Y\rp }{\cos\lp \Theta\lp \X,\Y\rp \rp },
 \end{eqnarray}
\begin{eqnarray}\label{eqn:conjecture_tan}
\hspace{9mm} 
\left|\Lambda\lp X^HAX\rp -\Lambda\lp Y^HAY\rp \right|&\prec_{w}& 
 \left\{ S\lp P_{\X+\Y} R_X\rp +S\lp P_{\X+\Y}R_Y\rp \right\}\, \tan\lp \Theta\lp \X,\Y\rp \rp .
\end{eqnarray}
If the subspace $\X$ is $A$-invariant, then
\begin{eqnarray}\label{eqn:eigen_conjecture}
\left|\Lambda\lp X^HAX\rp -\Lambda\lp Y^HAY\rp \right|&\prec_{w}&\frac{S\lp P_\X R_Y\rp }{\cos\lp \Theta\lp \X,\Y\rp \rp },
\end{eqnarray}
\begin{eqnarray}\label{eqn:eigen_conjecture_tan}
\left|\Lambda\lp X^HAX\rp -\Lambda\lp Y^HAY\rp \right|&\prec_{w}&
S\lp P_{\X+\Y} R_Y\rp \, \tan\lp \Theta\lp \X,\Y\rp \rp .
\end{eqnarray}
\end{conjecture}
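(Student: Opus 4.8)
The plan is to establish the scalar ("one‑dimensional") mixed bounds first, in a form that already carries the information we need for majorization, and then lift to subspaces through the standard variational characterization of eigenvalues together with Lidskii‑type majorization arguments. More precisely, I would proceed as follows. First, reduce to the ``cosine'' bounds \eqref{eqn:conjecture} and \eqref{eqn:eigen_conjecture}: the ``tangent'' forms \eqref{eqn:conjecture_tan}, \eqref{eqn:eigen_conjecture_tan} follow from them exactly as in the vector case in \eqref{eqn:chap6_1}, by noting $P_\Y R_X = P_\Y P_{\X+\Y} R_X$, $P_\X R_Y = P_\X P_{\X+\Y} R_Y$, and that the largest principal angle controls the relation $\|P_\Y\,(\cdot)\| \le \|P_{\X+\Y}(\cdot)\|$ while $1/\cos\Theta$ is replaced by $\tan\Theta$ after projecting onto $\X+\Y$; singular values of $P_{\X+\Y}R_X$ weakly dominate those of $P_\Y R_X$ up to the $\cos\Theta$ factor. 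Likewise, \eqref{eqn:eigen_conjecture} is the special case $R_X=0$ of \eqref{eqn:conjecture}, so the whole statement reduces to proving \eqref{eqn:conjecture}.

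For \eqref{eqn:conjecture} itself, the key identity I would exploit is the block‑type relation for the matrix Rayleigh quotient. Writing $k=\dim\X=\dim\Y$, choose the orthonormal bases so that $X^HY = \mathrm{diag}(\cos\theta_i)$ with $\theta_i$ the principal angles, i.e. put $X,Y$ in canonical (CS‑decomposition) form. Then $Y = X C + X_\perp S$ for suitable $C=\mathrm{diag}(\cos\theta_i)$, $S$ with $\|S\|=\sin\Theta(\X,\Y)$, and a partial isometry into $\XP$. Substituting into $Y^HAY$ and using $R_X=AX-X\rho(X)$, one gets $Y^HAY = C\,\rho(X)\,C + C X^H R_Y^{\,\sharp} + \cdots$; the honest computation expresses $Y^HAY - C\rho(X)C$ as a sum of terms each of which is a product of a contraction, a factor bounded by $\sin\Theta$ or $1$, and one of the residual blocks $P_\Y R_X$ or $P_\X R_Y$. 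The scalar proof in \cite{zhu2013} of \eqref{eqn:chap6_1} is precisely this computation with $k=1$; I would redo it at the block level, being careful to keep every intermediate quantity as a genuine matrix product so that singular‑value (weak‑majorization) submultiplicativity applies.

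The final step is to pass from a bound on a difference of Hermitian matrices to a weak‑majorization bound on the difference of their \emph{ordered eigenvalue vectors}. Here I would invoke the Lidskii–Wielandt / Ky Fan machinery: for Hermitian $M,N$ one has $|\Lambda(M)-\Lambda(N)| \prec_w S(M-N)$, and if $M-N$ is itself a sum $M-N = \sum_j T_j$ of (not necessarily Hermitian) matrices then $S(M-N) \prec_w \sum_j S(T_j)$, while $S(BT) \prec_w \|B\|\,S(T)$. Combining this chain with the block identity from the previous paragraph, and dividing the $\sin$‑type factors against $\cos\Theta$ to recover $P_\Y R_X/\cos\Theta$ and $P_\X R_Y/\cos\Theta$, yields \eqref{eqn:conjecture}. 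I expect the main obstacle to be the algebra of the block identity: unlike the scalar case, the matrices $\rho(X)$, $C$, $S$ do not commute, so the naive rearrangement that works for $k=1$ must be replaced by a careful bookkeeping that isolates $P_\Y R_X$ and $P_\X R_Y$ cleanly without cross terms — equivalently, choosing the right grouping so that only singular‑value submultiplicativity, and not any eigenvalue interlacing that could fail for non‑normal factors, is needed. A secondary subtlety is making sure the condition $\Theta(\X,\Y)<\pi/2$ is used exactly where $C^{-1}$ (hence $1/\cos\Theta$) enters, so that the bound does not silently blow up.
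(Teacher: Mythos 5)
The statement you are addressing is presented in the paper as a \emph{conjecture}: the authors state explicitly that they are unable to prove it and that it is only supported by numerical tests; what they actually prove are the weaker variants in Theorems \ref{thm:mixsincos}, \ref{thm:squaretan_1}, and \ref{thm:conditiontan2}. Your proposal does not close the gap either --- it stalls at precisely the point where the authors do. The parts you reduce correctly are genuine: \eqref{eqn:eigen_conjecture} and \eqref{eqn:eigen_conjecture_tan} are the special case $R_X=0$ of \eqref{eqn:conjecture} and \eqref{eqn:conjecture_tan}, and the tangent forms do follow from the cosine forms (the paper does this via Lemma \ref{lem:ch6_maj3}, $S\lp P_\X R_Y\rp \prec_w S\lp P_{\X+\Y}R_Y\rp\sin\lp\Theta\rp$, which is essentially your observation). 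Your ``block identity'' is also already in the paper as \eqref{eqn:maj6}: one has $X^HAX\lp X^HY\rp-\lp X^HY\rp Y^HAY=-X^HA\QXP\QXP^HY+X^H\QYP\QYP^HAY$, whence $S\lp X^HAX\lp X^HY\rp-\lp X^HY\rp Y^HAY\rp\prec_w S\lp P_\Y R_X\rp+S\lp P_\X R_Y\rp$ by the Fan inequality, with no commutativity difficulties at all. So the algebra you flag as the ``main obstacle'' is not the obstacle.

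The real gap is your final step, ``dividing the $\sin$-type factors against $\cos\Theta$ to recover $P_\Y R_X/\cos\Theta$,'' i.e.\ passing from the weak-majorization bound on $S\lp AT-TB\rp$ with $A=X^HAX$, $B=Y^HAY$, $T=X^HY$ to a bound on $\left|\Lambda\lp A\rp-\Lambda\lp B\rp\right|$ carrying the \emph{element-wise} weight vector $S\lp T^{-1}\rp=1/\cos\lp\Theta^\downarrow\rp$. No known tool delivers this. Lidskii--Wielandt (Theorem \ref{thm:bhatia1}) combined with $S\lp BT\rp\prec_w\|B\|\,S\lp T\rp$ --- the chain you invoke --- only extracts the scalar factor $\|T^{-1}\|=1/\cos\lp\theta_{\max}\rp$, which is exactly Lemma \ref{lemma:commutor2}(1) and yields the paper's Theorem \ref{thm:mixsincos}, not \eqref{eqn:conjecture}. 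To get the full vector $S\lp T^{-1}\rp$ one currently must either square everything (Lemma \ref{lemma:commutor2}(3), giving Theorem \ref{thm:squaretan_1}, from which one cannot take element-wise square roots inside $\prec_w$) or accept the extra factor $\sqrt{\kappa\lp X^HY\rp}$ from Theorem \ref{thm:condition} (giving Theorem \ref{thm:conditiontan2}). Unless you can prove the missing majorization inequality $\left|\Lambda\lp A\rp-\Lambda\lp B\rp\right|\prec_w S\lp T^{-1}\rp\,S\lp AT-TB\rp$ for Hermitian $A,B$ and invertible $T$ --- which would be a new result and would settle the conjecture --- your outline reproduces the known weaker bounds rather than proving the statement.
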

$P_\X$ and $P_\Y$ are orthogonal projectors on subspaces $\X$ and $\Y$; 
$\Lambda\lp \cdot\rp $ denotes the vector of decreasing eigenvalues; 
$S\lp \cdot\rp $ denotes the vector of decreasing singular values; 
$\Theta\lp \X,\Y\rp $ denotes the vector of decreasing angles between the subspaces $\X$ and $\Y$;
$\prec_{w}$~denotes the weak majorization relation; 
see the formal definitions in Section \ref{sec:majorization}. 
All arithmetic operations with vectors in Conjecture \ref{thm:conjecture}
are performed element-wise. 
Let us note that the eigenvalues and the singular values appearing in Conjecture \ref{thm:conjecture}
do not depend on the particular choice of the bases $X$ and $Y$ of the subspaces $\X$ and $\Y$, correspondingly. 

To highlight advantages of the mixed bounds of Conjecture \ref{thm:conjecture},  
 compared to a~priori majorization bounds, 
 we formulate here one known result as follows, cf. \cite{LI1992249}.
 \begin{thm}[{\cite[Theorem 2.1]{ka10}}]\label{thm:cha6priori}
Under the assumptions of Conjecture \ref{thm:conjecture}, 
\begin{eqnarray}\label{eqn:ap}
\left|\Lambda\lp X^HAX\rp -\Lambda\lp Y^HAY\rp \right|&\prec_w& \left[\lambda_{\max}-\lambda_{\min}\right]\sin\lp  \Theta\lp \X,\Y\rp \rp .
\end{eqnarray}
If in addition one of the subspaces is $A$-invariant, then
\begin{eqnarray}\label{eqn:eigen_ap}
\left|\Lambda\lp X^HAX\rp -\Lambda\lp Y^HAY\rp \right|&\prec_w& \left[\lambda_{\max}-\lambda_{\min}\right]\sin^2\lp  \Theta\lp \X,\Y\rp \rp ,
\end{eqnarray}
where $\lambda_{\max}\geq \lambda_{\min}$ are the end points of the spectrum of the matrix $P_{\X+\Y}A$ restricted to 
its invariant subspace $\X+\Y$.
 \end{thm}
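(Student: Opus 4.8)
The plan is to reduce the statement to one singular-value bound and then to sharpen a $2\times 2$-block version of the scalar argument behind \eqref{eqn:1order}--\eqref{eqn:2order}.

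\emph{Reduction.} Since $X^HAX=X^H\lp P_{\X+\Y}AP_{\X+\Y}\rp X$ and likewise for $Y$, the Ritz values depend on $A$ only through its compression to $\Z:=\X+\Y$, which by the very definition of $\lmin,\lmax$ is Hermitian on $\Z$ with spectrum $[\lmin,\lmax]$; so we may assume $A$ acts on $\Z$ with $\|A\|=\tfrac{\lmax-\lmin}{2}$, the normalization being free because replacing $A$ by $A-\tfrac{\lmin+\lmax}{2}I$ shifts every Ritz value of $\X$ and of $\Y$ by the same scalar and hence leaves $\Lambda\lp X^HAX\rp-\Lambda\lp Y^HAY\rp$ unchanged. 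I would then pass to principal bases: orthonormal $X,Y$ with $X^HY=C:=\mathrm{diag}\lp\cos\Theta\lp\X,\Y\rp\rp$, so that $Y=XC+WS$ for an orthonormal $W$ with $W^HX=0$, where $S:=\mathrm{diag}\lp\sin\Theta\lp\X,\Y\rp\rp$ and $\mathrm{range}\lp X\ W\rp=\Z$ (assuming $\X\cap\Y=\{0\}$; the general case is a routine modification). Setting $\widehat A:=\lp X\ W\rp^HA\lp X\ W\rp$ (Hermitian, $\|\widehat A\|=\tfrac{\lmax-\lmin}{2}$) and $R:=\begin{pmatrix}C&S\\-S&C\end{pmatrix}$ (orthogonal), a direct computation gives $X^HAX=\widehat A_{11}$ and $Y^HAY=\lp R\widehat A R^H\rp_{11}$.

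\emph{From eigenvalues to singular values.} Because $\Lambda\lp X^HAX\rp$ and $\Lambda\lp Y^HAY\rp$ are the eigenvalue vectors of these two $(1,1)$-blocks, the weak-majorization form of Mirsky's theorem, $\left|\Lambda\lp M\rp-\Lambda\lp N\rp\right|\prec_w S\lp M-N\rp$ for Hermitian $M,N$, yields $\left|\Lambda\lp X^HAX\rp-\Lambda\lp Y^HAY\rp\right|\prec_w S\lp E\rp$ with $E:=\lp R\widehat A R^H-\widehat A\rp_{11}$, and it remains to prove $S\lp E\rp\prec_w\lp\lmax-\lmin\rp\sin\Theta\lp\X,\Y\rp$.

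\emph{The crux.} This last inequality is the main obstacle. The cheap estimate $S\lp E\rp\le S\lp R\widehat A R^H-\widehat A\rp\le 2\,\|\widehat A\|\,S\lp R-I\rp$, combined with $S\lp R-I\rp=\lp 2\sin\tfrac{\theta_1}{2},2\sin\tfrac{\theta_1}{2},2\sin\tfrac{\theta_2}{2},\dots\rp$, already gives the correct order but with a constant exceeding one (and with $\Theta$ rather than $\sin\Theta$). To get the exact constant I would reorganize $\widehat A$ and $R$ into the $2\times 2$ blocks $\begin{pmatrix}a_i&b_i\\\bar b_i&c_i\end{pmatrix}$ indexed by the principal angles $\theta_i$, write $E$ explicitly through these blocks, factor a copy of $S$ out of $E$, and bound the leftover factor in the weak-majorization sense by $\lmax-\lmin$ using the trace and determinant identities of each $2\times 2$ compression of $A$ --- exactly the mechanism by which $\sqrt{(a-c)^2+4|b|^2}=\lmax-\lmin$ turns the identity $\rho(x)-\rho(y)=\sin\theta\,[\sin\theta(a-c)-2\cos\theta\,\re b]$ into the scalar bound \eqref{eqn:1order}. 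Finally, if $\X$ is $A$-invariant then $W^HAX=0$, the off-diagonal block of $\widehat A$ vanishes, $E=C\widehat A_{11}C+S\widehat A_{22}S-\widehat A_{11}$, and using $I-C=S^2(I+C)^{-1}$ one rewrites $E$ as a sum of terms each sandwiched between copies of $S$ (the first-order contribution of the general case is now absent); the same bookkeeping then produces the quadratic bound \eqref{eqn:eigen_ap} with $\sin^2\Theta\lp\X,\Y\rp$, paralleling the scalar equality \eqref{eqn:2order}.
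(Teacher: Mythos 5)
First, be aware that the paper contains no proof of Theorem \ref{thm:cha6priori}: it is quoted verbatim from \cite[Theorem 2.1]{ka10} purely as a known a priori bound to contrast with the conjectured mixed bounds. So what you are attempting is a reconstruction of the proof in \cite{ka10}, and it has a genuine gap at exactly the point you yourself flag as ``the crux.''

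Your reduction is fine: compressing $A$ to $\X+\Y$, centering the spectrum so $\|\widehat A\|=\tfrac{\lmax-\lmin}{2}$, passing to principal bases with $Y=XC+WS$, and writing $X^HAX=\widehat A_{11}$, $Y^HAY=\lp R\widehat AR^H\rp_{11}$. But note that applying Mirsky's theorem already commits you to proving $S\lp X^HAX-Y^HAY\rp\prec_w\lp\lmax-\lmin\rp\sin\lp\Theta\lp\X,\Y\rp\rp$, a formally \emph{stronger} singular-value statement than the eigenvalue statement of the theorem, and this is where the argument does not close. Writing $\widehat A=\begin{pmatrix}a&b\\b^H&c\end{pmatrix}$, one gets $E=-\lp a-CaC\rp+CbS+Sb^HC+ScS$ with $a-CaC=\lp I-C\rp aC+a\lp I-C\rp$ and $I-C=S^2\lp I+C\rp^{-1}$. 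The term-by-term estimate via Fan's theorem and $S\lp MN\rp\prec_w S\lp M\rp S\lp N\rp$, even with the optimal bounds $\|a\|,\|b\|,\|c\|\le\tfrac{\lmax-\lmin}{2}$, yields roughly $\lp\lmax-\lmin\rp\lp\sin\Theta+\tfrac32\sin^2\Theta\rp$ --- the right order but a constant exceeding one, which cannot be absorbed into $\lp\lmax-\lmin\rp\sin\Theta$. The sharp scalar constant comes from the Cauchy--Schwarz cancellation between $\sin^2\theta\lp a-c\rp$ and $2\sin\theta\cos\theta\,\re b$ inside a single $2\times2$ compression; for $p>1$ the rotation $R$ does split into independent $2\times2$ blocks per principal angle, but $\widehat A$ does not, so $E$ couples all angles and there is no per-angle trace/determinant identity to invoke. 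The same obstruction defeats your invariant-subspace argument: there $E=ScS-\lp a-CaC\rp$, and since $a$ need not be diagonal in the principal basis, the entries of $a-CaC$ carry weights $1-\cos\theta_i\cos\theta_j\neq\sin\theta_i\sin\theta_j$, so the two terms do not combine into $\lp c-a\rp$ sandwiched between copies of $S$, and the naive bound again gives a constant of about $\tfrac32$ rather than $1$. Supplying the missing multidimensional cancellation is the entire content of \cite{ka10}, whose proof runs through a different chain of majorization lemmas (ultimately reducing to orthogonal projectors and to $S\lp P_\X-P_\Y\rp$) rather than the blockwise bookkeeping you propose.
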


Theorem \ref{thm:cha6priori} generalizes bounds \eqref{eqn:2order} and \eqref{eqn:1order} to the multidimensional case, but also inherits their deficiencies. 
Similar to bound \eqref{eqn:chap6_1} being compared to bounds \eqref{eqn:2order} and \eqref{eqn:1order}, Conjecture \ref{thm:conjecture} is more mathematically elegant, compared to Theorem \ref{thm:cha6priori}.
Indeed, bound \eqref{eqn:ap} cannot imply 
bound \eqref{eqn:eigen_ap} in Theorem \ref{thm:cha6priori}, while in Conjecture~\ref{thm:conjecture} bounds \eqref{eqn:eigen_conjecture} and \eqref{eqn:eigen_conjecture_tan} for the case of the $A$-invariant subspace $\X$ follow directly from bounds \eqref{eqn:conjecture} and \eqref{eqn:conjecture_tan}, since $R_X=0$ and some terms vanish. 

Conjecture \ref{thm:conjecture} is particularly advantageous in a case 
where both $\X$ and $\Y$ approximate the same $A$-invariant subspace, 
so that the principal angles between subspaces $\X$ and $\Y$ are small and 
the singular values of both residual matrices are also small, e.g., leading to bound \eqref{eqn:conjecture_tan}
of nearly the second order, while bound \eqref{eqn:ap} is of the first order.
For example, let the subspace $\X$ be obtained off-line to approximate an $A$-invariant subspace, 
while the subspace $\Y$ be computed from the subspace $\X$ by rounding in a low-precision computer arithmetic of components 
of the basis vectors spanning $\X$, for the purpose of efficiently storing, fast transmitting, or quickly analyzing in real time. 
Bounds \eqref{eqn:conjecture} and \eqref{eqn:conjecture_tan} allow estimating the effect of the rounding on the change in the Ritz values and choosing an optimal rounding precision. 

Another example is deriving convergence rate bounds of first order iterative minimization methods related to the Ritz values; e.g.,\ subspace iterations like the Locally Optimal Block Preconditioned Conjugate Gradient (LOBPCG)  method \cite{k2001}. 
The first order methods typically converge linearly. If the iterative subspace, which approximates an invariant subspace, is slightly perturbed for whatever purpose, that effects in even smaller changes in the Ritz values, e.g., according to \eqref{eqn:conjecture_tan}, preserving essentially the same rate of convergence. Such arguments appear in trace minimization  \cite{zli15}, where \cite[Lemma 5.1]{zli15} presents an inequality for the trace of the difference $\Lambda\lp X^HAX\rp -\Lambda\lp Y^HAY\rp $ that gradually changes, as \eqref{eqn:conjecture_tan}, from the first to the second order error bound.
Much earlier examples can be found, e.g., in \cite[Theorem 4.2]{k87}, where the Ritz vectors are substituted by their surrogate approximations, which are easier to deal with, slightly perturbing the subspace. 
Similarly, in \cite[Lemma 4.1]{bpk96}, the actual nonlinear subspace iterations are approximated by a linear scheme. Availability of bounds like \eqref{eqn:conjecture_tan} is expected to greatly simplify proofs and lead to better convergence theory of subspace iterations.

We are unable to prove  Conjecture \ref{thm:conjecture}, although it holds in all our numerical tests. 
Instead, we prove here slightly weaker results, which still 
generalize and improve bounds obtained in \cite{{knyazev97},{sun91}}, even for the case where one
subspace is $A$-invariant. Our bounds, originating from \cite[Section 6.3]{zhu2012thesis},
exhibit the desired behavior balancing between first and second order error terms.

We finally motivate our results of \S\ref{ss:ap}. The Ritz values are fundamentally related to PABS, as discovered in \cite{ka06}. 
For example, simply taking the matrix $A$ to be an orthogonal projector $Z$ to a subspace $\Z$ of the same dimension as 
$\X$ and $\Y$ turns the Ritz values into the cosines squared of PABS, e.g., 
\[
 \Lambda\lp X^HAX\rp  = \Lambda\lp X^HZX\rp  = \cos^2\lp  \Theta\lp \X,\Z\rp \rp .
\]
Thus, in this example,  Conjecture \ref{thm:conjecture} bounds changes in the PABS $\Theta\lp \X,\Z\rp $ compared to $\Theta\lp \Y,\Z\rp $,
extending and improving some known results, e.g., from our earlier works \cite{ka02,ka06a,ka06}, even in the particular case
$\X=\Z$, where $\Z$ is $A=Z$-invariant. 

It~is interesting to investigate a geometric meaning, in terms of PABS, of the singular values that appear in the bounds, such as
$S\lp P_{\X+\Y} R_Y\rp $. We leave such an investigation to future research, 
except for one simplified case, where the projector $P_{\X+\Y}$ is dropped,
in \S\ref{ss:ap} that discusses new majorization bounds for changes in matrix eigenvalues under additive perturbations.

 \section{Notation and Definitions}\label{sec:majorization}
 Throughout this paper, $S\lp A\rp $ denotes  a vector of decreasing singular values of a matrix $A$, such that $S\lp A\rp =[s_1\lp A\rp ,\ldots,s_n\lp A\rp ]$. $S^2\lp A\rp $ denotes the entry-wise square, i.e. $S^2\lp A\rp =[s_1^2\lp A\rp ,\ldots,s_n^2\lp A\rp ]$.  $\Lambda\lp A\rp $ denotes a vector of decreasing eigenvalues of $A$. $|||\cdot|||$ denotes a unitarily invariant norm. $s_{\min}\lp A\rp $ and $s_{\max}\lp A\rp =\|A\|$ denote the smallest and largest, correspondingly, singular values of $A$. $\lambda_{\min}\lp A\rp $ and $\lambda_{\max}\lp A\rp $ denote the smallest and largest eigenvalues of a Hermitian matrix $A$. $P_\X$~denotes an orthogonal projector on the subspace $\X$. $\XP$ denotes the orthogonal complement of the subspace $\X$. $\Theta\lp \X,\Y\rp $ denotes the vector of decreasing principal angles between the subspaces $\X$ and $\Y$.  $\theta_{\max}\lp \X,\Y\rp $ and $\theta_{\min}\lp \X,\Y\rp $ denote the largest and smallest angles between subspaces $\X$ and $\Y,$ correspondingly.  
 
 We use the symbols ``$\downarrow$'' and ``$\uparrow$'' to arrange components of a vector in decreasing and increasing order, correspondingly. For example, the equality $x^\downarrow=[x_1,\ldots,x_n]$ implies $x_1\geq x_2\geq \cdots\geq x_n$. All arithmetic operations with vectors are performed entry-wise, without introducing a special notation. 
 Vectors $S\lp \cdot\rp $, $\Lambda\lp \cdot\rp ,$  and $\Theta\lp \cdot\rp $ are by definition decreasing, i.e. $S\lp \cdot\rp =S^{\downarrow}\lp \cdot\rp $, $\Lambda\lp \cdot\rp =\Lambda^{\downarrow}\lp \cdot\rp ,$  and $\Theta\lp \cdot\rp =\Theta^{\downarrow}\lp \cdot\rp .$
 
 We now define the concepts
of weak majorization and (strong)  majorization, which are comparison relations between
two real vectors. For detailed information, we refer the reader to \cite{bhatia_book,marshall79}.
\begin{defin}
Let $x^{\downarrow}$ and $y^{\downarrow} \in R^n$ be the vectors obtained by rearranging the coordinates of vectors 
$x$ and $y$ in algebraically decreasing order, denoted by $x_1, \ldots , x_n$ and $y_1, \ldots, y_n$,
such that $x_1\geq x_2\cdots\geq x_n$ and $y_1\geq y_2\cdots\geq y_n.$ 
 We say that $x$ is weakly majorized by $y$, 
  using the notation $ x \prec_w y,$ if
\[
\sum^k_{i=1}x_i\leq \sum^k_{i=1}y_i,   \quad 1\leq k\leq n.
\]
If in addition $
\sum^n_{i=1}x_i = \sum^n_{i=1}y_{i},
$
we say that 
$x$ is  majorized or strongly majorized by $y$, using the notation $ x \prec y$.
\end{defin}

The inequality $x \leq y$ means $x_i\leq y_i$ for $i=1,\ldots,n$. Therefore, $x \leq y$ 
implies $x \prec_w y$, but $x \prec_w y$ does not imply $x \leq y$. The relations 
$\prec$ and $\prec_w$ are both reflective and transitive~\cite{{bhatia_book}}. 
%but not symmetric, e.g., $y\prec x$ and $x\prec y$ do not imply $x=y$.

\begin{thm}[{\cite{Polya50}}]\label{thm:convex} If $f$ is a increasing convex function, then
$x\prec_w y$ implies $f\lp x\rp \prec_w f\lp y\rp $.
\end{thm}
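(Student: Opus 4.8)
The plan is to prove Pólya's classical result that, for an increasing convex function $f\colon\R\to\R$, the relation $x\prec_w y$ implies $f(x)\prec_w f(y)$, where $f$ is applied entry-wise. I would first reduce the statement to a single pointwise-type inequality using the standard characterization of weak majorization: $x\prec_w y$ holds if and only if $\sum_i (x_i - t)_+ \le \sum_i (y_i - t)_+$ for every real $t$, where $(s)_+ = \max(s,0)$. An equivalent and more convenient route is the classical Hardy--Littlewood--Pólya fact that $x\prec_w y$ iff $\sum_i \phi(x_i)\le\sum_i\phi(y_i)$ for \emph{every} increasing convex $\phi$; but since that is essentially the statement itself with the sums taken only at $k=n$, I would instead work directly from the partial-sum definition.

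First I would record the two elementary facts I need about an increasing convex $f$: (i) $f$ is nondecreasing, so it preserves the decreasing order, meaning $f(x)^\downarrow = [f(x_1),\dots,f(x_n)]$ when $x = x^\downarrow$ — this lets me keep the coordinates aligned without rearranging; and (ii) for any $a\ge b$ and $c\ge d$ with $a\ge c$ and $a+ d \ge \text{(something)}$... more cleanly, the key lemma is: if $a_1\ge a_2$, $b_1\ge b_2$, $a_1\le b_1$, and $a_1+a_2\le b_1+b_2$, then $f(a_1)+f(a_2)\le f(b_1)+f(b_2)$. This is the two-point case and follows from convexity together with monotonicity: one moves from $(b_1,b_2)$ to $(a_1,a_2)$ by first decreasing the larger coordinate (using monotonicity) and then performing a Robin Hood / $T$-transform that decreases the spread while fixing the sum (using convexity), staying within the constraint region. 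Iterating this two-point argument across the ordered coordinates is the standard inductive engine.

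The main step is then the induction on $k$, $1\le k\le n$. Fix $k$ and consider $\sum_{i=1}^k f(x_i)$ versus $\sum_{i=1}^k f(y_i)$. Because $\sum_{i=1}^j x_i \le \sum_{i=1}^j y_i$ for all $j\le k$, one can construct an auxiliary vector $\tilde y$ with $\tilde y_i \le y_i$ (entry-wise, using that the $y_i$ are the larger partial sums, so we have room to decrease them) such that $\tilde y$ is still decreasing and $\sum_{i=1}^j \tilde y_i = \sum_{i=1}^j x_i$ for $j<k$ while $\sum_{i=1}^k \tilde y_i \ge \sum_{i=1}^k x_i$; in effect $x\prec \tilde y$ on the first $k$ coordinates in the ordinary majorization (equal-sum) sense after a final adjustment. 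Then $\sum_{i=1}^k f(x_i)\le \sum_{i=1}^k f(\tilde y_i)$ by the equal-sum (Schur-convexity) case applied to $f$ on these $k$ coordinates, and $\sum_{i=1}^k f(\tilde y_i)\le\sum_{i=1}^k f(y_i)$ by monotonicity of $f$ since $\tilde y_i\le y_i$. Chaining the two inequalities gives the $k$-th partial sum bound, completing the proof.

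\textbf{The hard part} is the bookkeeping in constructing the intermediate vector $\tilde y$ and verifying it stays decreasing while realizing the prescribed partial sums — this is where a careless argument can go wrong, since one must simultaneously respect the ordering constraints and the partial-sum constraints. An alternative that sidesteps this is to invoke the Hardy--Littlewood--Pólya representation: $x\prec_w y$ iff there is a doubly \emph{sub}stochastic matrix $D$ with $x = Dy$ (equivalently $x\le Dy$ entry-wise for a doubly stochastic $D$, after appending). Then for each coordinate $x_i = \sum_j D_{ij} y_j$ with $\sum_j D_{ij}\le 1$ and $D_{ij}\ge0$; writing $x_i\le \sum_j D_{ij} y_j + (1-\sum_j D_{ij})\cdot M$ is awkward because $M$ is unbounded, so one uses instead $f(x_i)\le \sum_j D_{ij} f(y_j) + (1 - \sum_j D_{ij}) f(\text{lower bound})$ via convexity and then monotonicity to drop the last term when $f\ge 0$; handling the sign of $f$ requires the increasing hypothesis to replace $f$ by $f - f(t_0)$ harmlessly in partial sums only after noting the number of terms is the same on both sides — which again needs a small argument. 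I would present the direct inductive proof as the cleanest self-contained route, flagging the $\tilde y$ construction as the one place demanding care.
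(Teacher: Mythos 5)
The paper itself gives no proof of this theorem --- it is stated as a citation to P\'olya's 1950 note --- so there is nothing internal to compare against, and I can only assess your argument on its own terms. Your overall strategy is the standard one: reduce the $k$-th partial-sum inequality to the equal-sum (Hardy--Littlewood--P\'olya, Schur-convexity) case by interpolating a vector $\tilde y$ between $x$ and $y$, then finish with monotonicity of $f$. The preliminary reduction $f(x^\downarrow)=f(x)^\downarrow$ for nondecreasing $f$ is correct, and your two-point lemma is true: for $a_2>b_2$ one has $\delta:=a_2-b_2\le b_1-a_1$, and convexity plus monotonicity give $f(b_1)-f(a_1)\ge f(a_1+\delta)-f(a_1)\ge f(a_2)-f(a_2-\delta)=f(a_2)-f(b_2)$.

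However, the construction of $\tilde y$ as you specify it is impossible in general, and this is precisely the step you flagged as delicate. You require $\tilde y_i\le y_i$ entrywise \emph{and} $\sum_{i=1}^{j}\tilde y_i=\sum_{i=1}^{j}x_i$ for $j<k$. The second condition telescopes to $\tilde y_j=x_j$ for $j\le k-1$, and combined with the first it forces $x_j\le y_j$ entrywise, which weak majorization does not give: for $x=(1,1,1)$, $y=(3,0,0)$, $k=3$, you would need $\tilde y_2=1\le y_2=0$. The fix is to anchor $\tilde y$ to $y$ rather than to $x$: set $\tilde y_j=y_j$ for $j<k$ and $\tilde y_k=y_k-\lp \sum_{i\le k}y_i-\sum_{i\le k}x_i\rp$. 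Then $\tilde y$ is still decreasing, $\tilde y\le y$, and $(x_1,\ldots,x_k)\prec(\tilde y_1,\ldots,\tilde y_k)$ in the equal-sum sense, after which your two inequalities chain as intended. Be aware also that the ``equal-sum case'' you invoke ($u\prec v$ implies $\sum_i f(u_i)\le\sum_i f(v_i)$ for convex $f$) is itself the HLP theorem; your two-point lemma is one $T$-transform step of its proof, and the fact that finitely many such steps connect $\tilde y$ to $x$ is the part you leave unproved. A shorter self-contained route, closer to P\'olya's original argument, is Abel summation: choose nonincreasing, nonnegative subgradients $c_i$ of $f$ at the points $x_i$, so that $\sum_{i\le k}\lp f(y_i)-f(x_i)\rp\ge\sum_{i\le k}c_i(y_i-x_i)=\sum_{i<k}(c_i-c_{i+1})(Y_i-X_i)+c_k(Y_k-X_k)\ge0$, where $X_j,Y_j$ denote the partial sums; this handles every $k$ at once with no auxiliary vector.
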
 
 
 From Theorem \ref{thm:convex}, we see that increasing convex functions preserve the weak majorization.
The following two results also provide ways to preserve majorization.
Let nonnegative vectors $x, y, u,$ and $v$ be decreasing and of the same size. 
If $x \prec_w y$, then $x \,  u \prec_w y \,  u$; see, e.g.,\ \cite{ka10}. 
Moreover, if  $x \prec_w y$ and  $u \prec_w v$, then $x\,  u \prec_w y \,  v$. The proof is simple,
$x\prec_w y$ implies $x \,  u \prec_w y \,  u$ and  $u \prec_w v$ implies $y \,  u \prec_w y \,  v$, 
so we have $x\,  u \prec_w y \,  v$.

  Majorization is one of the most powerful techniques that can be used to 
derive inequalities in a concise way.    Majorization relations 
among eigenvalues and singular values
of matrices produce a variety of inequalities in matrix theory. 
 We review some existing majorization inequalities
and prove necessary new majorization inequalities for singular
values and eigenvalues in the Appendix.

We define PABS using singular values; see, e.g.,\ \cite{bjorck73,ka02,ZhuK13}. 
\begin{defin}\label{thm:2.1def}
Let columns of the matrices $\QX \in \C^{n \times p}$ and 
$\QY \in \C^{n \times q}$ %\lp $p\leq n$ and $q\leq n$\rp  
form  orthonormal bases for the subspaces $\X$ and $\Y$, 
correspondingly, and $m=\min\lp p,q\rp $. 
Then
\[
\cos\lp  \Theta^{\uparrow}\lp \X,\Y\rp \rp =S\lp  \QX^H\QY\rp =
%S^{\downarrow}\lp X^HY\rp =
\left[s_1\lp  \QX^H\QY\rp , \ldots, s_m\lp  \QX^H\QY\rp \right].
\] 
\end{defin}

\section{Majorization-type mixed bounds}\label{sec:ch6.3}
In this section, we derive several different majorization-type mixed bounds of the absolute changes of  eigenvalues of the matrix RQ for Hermitian matrices in terms of 
singular values of residual matrix and PABS.
One of our main results is contained in the following theorem.
\begin{thm}\label{thm:mixsincos}
%Let the columns of matrices $X$ and $Y$ form the orthonormal bases for the subspaces $\X$ and $\Y$ 
%with $\dim\lp \X\rp =\dim\lp \Y\rp $, correspondingly.
%  Let $A$ be a Hermitian matrix and $\Theta\lp \X,\Y\rp <\pi/2$. Then 
Under the assumptions of Conjecture \ref{thm:conjecture},
we have  
\begin{eqnarray}\label{eqn:ch6_1}
\hspace{9mm}
\left|\Lambda\lp X^HAX\rp -\Lambda\lp Y^HAY\rp \right|  &\prec_w
&\frac{1}{\cos\lp \theta_{\max}\lp \X,\Y\rp \rp }\left\{S\lp P_\Y R_X\rp +S\lp P_\X R_Y\rp \right\}.
\end{eqnarray}
If the subspace $\X$ is $A$-invariant, then
\begin{eqnarray}\label{eqn:ch6_2}
\left|\Lambda\lp X^HAX\rp -\Lambda\lp Y^HAY\rp \right|  &\prec_w
&\frac{1}{\cos\lp \theta_{\max}\lp \X,\Y\rp \rp }S\lp P_\X R_Y\rp .
\end{eqnarray}
%%%%%%%%%%%%%%%%%%%%%%%%%%
\end{thm}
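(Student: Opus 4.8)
The plan is to reduce the multidimensional bound \eqref{eqn:ch6_1} to an algebraic identity between the matrix Rayleigh quotients and the residual matrices, and then apply weak-majorization machinery to pass from matrices to the vectors of eigenvalues and singular values. First I would choose convenient bases: since $\Theta\lp \X,\Y\rp <\pi/2$, the matrix $X^HY$ is invertible, so $Y$ can be written in terms of $X$ and a component in $\XP$. More precisely, I would start from the key algebraic step used in the one-dimensional case of \cite{zhu2013}: expand $X^HAX - (X^HY)(Y^HAY)(X^HY)^{-1}$ (or a symmetrized version) and relate it to $P_\Y R_X$ and $P_\X R_Y$. The natural identity to exploit is $Y^H R_X = Y^HAX - (Y^HX)\rho\lp X\rp$ and $X^H R_Y = X^HAY - (X^HY)\rho\lp Y\rp$; subtracting appropriate conjugates gives an expression of the form $(X^HY)\rho\lp Y\rp - \rho\lp X\rp(X^HY) = X^H R_Y - (R_X)^HY$, which is the multidimensional analogue of the scalar identity behind \eqref{eqn:chap6_1}.

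Second, I would invoke a perturbation bound for eigenvalues of two Hermitian matrices related by a congruence that is close to unitary. Writing $C = X^HY$ with singular values $\cos\lp \Theta\lp \X,\Y\rp \rp$, the matrix $\rho\lp X\rp$ and $C\rho\lp Y\rp C^{-1}$ have the same eigenvalues as $\rho\lp Y\rp$ only up to the non-normality of $C$; the cleaner route is to compare $\Lambda\lp \rho\lp X\rp\rp$ and $\Lambda\lp \rho\lp Y\rp\rp$ directly via a Weyl-type majorization: $\left|\Lambda(H_1)-\Lambda(H_2)\right|\prec_w S(H_1-H_2)$ for Hermitian $H_1,H_2$. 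So I would manufacture Hermitian matrices whose eigenvalues are exactly the Ritz values and whose difference has singular values controlled by $S\lp P_\Y R_X\rp$, $S\lp P_\X R_Y\rp$, and $1/\cos\lp \theta_{\max}\rp$. Concretely, conjugating $\rho\lp Y\rp$ by the unitary polar factor of $C$ and absorbing the positive-definite factor (whose eigenvalues are the $\cos$ of the angles) is where the $1/\cos\lp \theta_{\max}\rp$ denominator enters — bounding $\|C^{-1}\|$ or $\|(C^HC)^{-1/2}\|$ by $1/\cos\lp \theta_{\max}\lp \X,\Y\rp \rp$.

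Third, once the matrix-level inequality $|\Lambda(\rho\lp X\rp) - \Lambda(\rho\lp Y\rp)| \prec_w S(M)$ is in hand for the appropriate difference matrix $M$, I would use the submultiplicativity of singular values under products, $S(AB) \prec_w S(A)\cdot S(B)$ when one factor is a scalar-like bound, together with the weak-majorization product rules quoted in Section~\ref{sec:majorization} (if $x\prec_w y$ and $u\prec_w v$ with all vectors nonnegative decreasing then $xu\prec_w yv$), to split $S(M)$ into the sum $S\lp P_\Y R_X\rp + S\lp P_\X R_Y\rp$ scaled by the constant $1/\cos\lp \theta_{\max}\rp$. The triangle-type inequality for singular values of a sum, $S(A+B)\prec_w S(A)+S(B)$, handles the two-term structure. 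Finally, \eqref{eqn:ch6_2} is immediate: if $\X$ is $A$-invariant then $R_X = AX - X\rho\lp X\rp = 0$, so the term $S\lp P_\Y R_X\rp$ vanishes and \eqref{eqn:ch6_1} collapses to \eqref{eqn:ch6_2}.

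The main obstacle I anticipate is the second step: cleanly producing Hermitian matrices realizing the Ritz values whose difference is genuinely bounded by the residual quantities divided by $\cos\lp \theta_{\max}\rp$ — rather than by the weaker $\cos\lp \theta_{\min}\rp$ or a product of individual angle cosines, which would give a bound weaker than the conjectured $\prec_w$ with the full vector $\cos\lp \Theta\lp \X,\Y\rp \rp$ in the denominator. Getting the single scalar $\cos\lp \theta_{\max}\rp$ (and not the element-wise vector, which is why this is a theorem and not the conjecture) requires carefully controlling the interplay between the polar decomposition of $X^HY$ and the eigenvalue perturbation, and this is precisely the gap between Theorem~\ref{thm:mixsincos} and Conjecture~\ref{thm:conjecture}.
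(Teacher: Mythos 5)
Your overall architecture coincides with the paper's proof: the algebraic identity $X^HAX\lp X^HY\rp -\lp X^HY\rp Y^HAY = X^HR_Y - R_X^{H}Y$ (your first step, up to an immaterial sign), the identifications $S\lp X^HR_Y\rp =S\lp P_\X R_Y\rp $ and $S\lp R_X^{H}Y\rp =S\lp P_\Y R_X\rp $, Fan's inequality $S\lp A+B\rp \prec_w S\lp A\rp +S\lp B\rp $ (Theorem \ref{thm:fan}) for the two-term structure, the equality $s_{\min}\lp X^HY\rp =\cos\lp \theta_{\max}\lp \X,\Y\rp \rp $, and the observation that $R_X=0$ collapses \eqref{eqn:ch6_1} into \eqref{eqn:ch6_2} are all exactly what the paper does.

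However, your second step---the only genuinely hard one, which you yourself flag as ``the main obstacle''---is left unresolved, and the mechanism you sketch for it would not close on its own. Conjugating $\rho\lp Y\rp $ by the unitary polar factor $W$ of $C=X^HY=WP$ does produce Hermitian matrices $A'=W^H\rho\lp X\rp W$ and $B'=\rho\lp Y\rp $ with the right eigenvalues and with $S\lp A'P-PB'\rp =S\lp \rho\lp X\rp C-C\rho\lp Y\rp \rp $; but passing from $S\lp A'P-PB'\rp $ to $S\lp A'-B'\rp $ is not a matter of ``bounding $\|C^{-1}\|$'': the decomposition $A'-B'=\lp A'P-PB'\rp P^{-1}+\lp PB'P^{-1}-B'\rp $ leaves the uncontrolled similarity term $PB'P^{-1}-B'$, and $C\rho\lp Y\rp C^{-1}$ is not Hermitian, so Weyl's majorant theorem cannot be applied to it directly. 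The missing ingredient is the Bhatia--Davis--Kittaneh commutator majorization $s_{\min}\lp T\rp S\lp A-B\rp \prec_w S\lp AT-TB\rp $ for Hermitian $A,B$ and positive definite $T$ (Theorem \ref{thm:s2}), which the paper combines with the SVD of $T$ and Weyl's theorem (Theorem \ref{thm:bhatia1}) to obtain the first statement of Lemma \ref{lemma:commutor2}, namely $s_{\min}\lp T\rp \left|\Lambda\lp A\rp -\Lambda\lp B\rp \right|\prec_w S\lp AT-TB\rp $ for invertible $T$. Once that lemma is cited and applied with $A:=X^HAX$, $B:=Y^HAY$, $T:=X^HY$, the remainder of your argument goes through verbatim.
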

\begin{proof}
Since $\Theta\lp \X,\Y\rp <\pi/2$, the singular values of  $X^HY$ are positive, 
which implies that the matrix $X^HY$ is invertible.
We apply the first statement of Lemma~\ref{lemma:commutor2} with $A: =X^HAX$, $B: =Y^HAY$ and $T: =X^HY$ 
obtaining
\begin{eqnarray}\label{eqn:maj7}
\left|\Lambda\lp X^HAX\rp -\Lambda\lp Y^HAY\rp \right|\prec_w \frac{1}{s_{\min}\lp X^HY\rp }S\lp  X^HAX\lp X^HY\rp -\lp X^HY\rp Y^HAY\rp .\nonumber \\
\end{eqnarray}
By Definition \ref{thm:2.1def}, the singular values of 
$X^HY$ are the cosines of principal angles between two subspaces $\X$ and $\Y$. So, we have
\begin{eqnarray}\label{eqn:maj4}
 s_{\min}\lp X^HY\rp &=&\cos\lp \theta_{\max}\lp \X,\Y\rp \rp .
 \end{eqnarray}
Additionally, the expression $X^HAX\lp X^HY\rp -\lp X^HY\rp Y^HAY$ in the right side of \eqref{eqn:maj7} can be written as 
\begin{eqnarray}\label{eqn:maj6}
X^HAX\lp X^HY\rp -\lp X^HY\rp Y^HAY&=&X^HA\lp I-\QXP\QXP^H\rp Y-X^H\lp I-\QYP\QYP^H\rp AY \nonumber\\
%&=& X^HAY-X^HA\QXP\QXP^HY-X^HAY+X^H\QYP\QYP^HAY\nonumber\\
&=&-X^HA\QXP\QXP^HY+X^H\QYP\QYP^HAY,
\end{eqnarray}
where $[\QX, \QXP] $ and $[\QY, \QYP]$ are unitary matrices.
Since the singular values are invariant under conjugate transpose and orthonormal transforms, we have
\begin{eqnarray}\label{eqn:prothm}
S\lp X^HA\QXP\QXP^HY\rp &=&S\lp Y^H\QXP\QXP^HAX\rp =S\lp P_\Y P_\XP AX\rp 
=S\lp P_\Y R_X\rp .
\end{eqnarray}
Similarly, $S\lp X^H\QYP\QYP^HAY\rp =S\lp P_\X R_Y\rp .$ 
From Theorem \ref{thm:fan} and taking into account equalities \eqref{eqn:maj6} and \eqref{eqn:prothm}, we establish that
\begin{eqnarray}\label{eqn:maj8}
S\lp X^HAX\lp X^HY\rp -\lp X^HY\rp Y^HAY\rp &=&S\lp -X^HA\QXP\QXP^HY+X^H\QYP\QYP^HAY\rp \nonumber\\
&\prec_w &S\lp X^HA\QXP\QXP^HY\rp +S\lp X^H\QYP\QYP^HAY\rp \nonumber\\
&=&S\lp P_\Y R_X\rp +S\lp P_\X R_Y\rp .
\end{eqnarray}
Substituting \eqref{eqn:maj8} and \eqref{eqn:maj4} into \eqref{eqn:maj7}, 
we obtain \eqref{eqn:ch6_1}.
If the subspace $\X$ is $A$-invariant, then $R_X=0$, and \eqref{eqn:ch6_1} turns into \eqref{eqn:ch6_2}.
\end{proof}

%It is worth pointing out that the eigenvalues of $X^HAX$ and $Y^HAY$ do not depend on the particular choice of the orthonormal  bases for the subspaces $\X$ and $\Y$. Moreover, when the  subspace $\X$ is $A$-invariant, the eigenvalues of $X^HAX$ are exactly some eigenvalues of $A$.
%Thus, bound  \eqref{eqn:ch6_2} represent the approximation errors for eigenvalues of $A$ in the RR method. 

%Also we would like to point out that the singular value of $ R_X$ and $ R_Y$ do not depend on the particular choice of the orthonormal  bases for the subspaces $\X$ and $\Y$. For example, the singular value of $R_X$ are the same as the largest $\dim\lp \X\rp $ singular values of $P_\XP AP_\X.$ In addition, we premultiply $R_X$ and $P_\XP AP_\X$ by an orthogonal projector. They have the same largest $\dim\lp \X\rp $ singular values. Thus, the terms $R_X$ and $R_Y$ in $S\lp P_\Y R_X\rp $ and $S\lp P_\X R_Y\rp $ in Theorem \ref{thm:mixsincos} do not depend on the particular choice of the orthonormal  bases for the subspaces $\X$ and $\Y$.
%An alternative proof of \eqref{eqn:ch6_2} can be found in~\cite{ovtchinnikov62}.
%The bounds in Lemma \ref{thm:mixsincos} are sharp.  For the case $\dim\lp \X\rp =\dim\lp \Y\rp =1$, 
%the majorization inequality in \eqref{eqn:ch6_1} could become an equality. 
 %In particular, if $\X$ is $A$-invariant with $\dim\lp \X\rp =1$, then the weak majorization inequality in \eqref{eqn:ch6_2}
 % becomes an equality, e.g.,\ see \cite{cho97}. 
 
Let us clarify implications of the weak majorization inequalities in Theorem \ref{thm:mixsincos}. 
%may not be obvious to every reader. 
The components of both vectors $\Lambda\lp X^HAX\rp $ and $\Lambda\lp Y^HAY\rp $ are ordered decreasing. Let us denote 
$\Lambda\lp X^HAX\rp =\left[\alpha_1, \alpha_2, \ldots, \alpha_p\right]$ where $\alpha_1\geq\alpha_2\geq \ldots\geq\alpha_p$ and
 $\Lambda\lp Y^HAY\rp =\left[\beta_1, \beta_2, \ldots, \beta_p\right]$ where $\beta_1\geq \beta_2\geq \ldots\geq \beta_p$.
For $k=1, \ldots, p$, the weak majorization inequalities in \eqref{eqn:ch6_1} and \eqref{eqn:ch6_2} in Theorem \ref{thm:mixsincos}  are equivalent to
\begin{eqnarray}\label{eqn:implication_1}
\sum_{i=1}^k|\alpha_i-\beta_i|^{\downarrow}\leq \frac{1}{\cos\lp \theta_{\max}\lp \X,\Y\rp \rp }\sum_{i=1}^k\lp  s^{\downarrow}_i\lp  P_\Y R_X\rp +s^{\downarrow}_i\lp  P_\X R_Y\rp \rp ,
\end{eqnarray}
and
\begin{eqnarray}\label{eqn:implication_2}
\sum_{i=1}^k|\alpha_i-\beta_i|^{\downarrow}\leq \frac{1}{\cos\lp \theta_{\max}\lp \X,\Y\rp \rp }\sum_{i=1}^k\lp  s^{\downarrow}_i\lp  P_\X R_Y\rp \rp .
\end{eqnarray}
If $p=1$, the results in \eqref{eqn:implication_1} and \eqref{eqn:implication_2} are the same as  \eqref{eqn:chap6_1} and \eqref{eqn:chap6_2}. 

We use only the largest angle in Theorem \ref{thm:mixsincos}. We now prove two majorization mixed 
bounds involving all principal angles, but not as strong as \eqref{eqn:conjecture}. The first one is for the squares. 

\begin{thm}\label{thm:squaretan_1}
 Under the assumptions of Conjecture \ref{thm:conjecture},
%Under the assumptions of Theorem \ref{thm:mixsincos}, 
we have 
\[
\left|\Lambda\lp X^HAX\rp -\Lambda\lp Y^HAY\rp \right|^2  \prec_w
\frac{\left\{ S\lp P_\Y R_X\rp +S\lp P_\X R_Y\rp \right\}^2}{\cos^2\lp \Theta^\downarrow\lp \X,\Y\rp \rp }.
\]
In addition, if the subspace $\X$ is $A$-invariant, then
\[
\left|\Lambda\lp X^HAX\rp -\Lambda\lp Y^HAY\rp \right|^2 \prec_w
\frac{S^2\lp P_\X R_Y\rp }{\cos^2\lp \Theta^\downarrow\lp \X,\Y\rp \rp }.
\]
%where $\frac{S^2\lp P_\X R_Y\rp }{\cos^2\lp \Theta^\downarrow\lp \X,\Y\rp \rp \rp }=\left[\frac{s^2_{\max}\lp P_\X R_Y\rp }{\cos^2\lp \theta_{\max}\lp \X,\Y\rp \rp },\dots,\frac{s^2_{\min}\lp P_\X R_Y\rp }{\cos^2\lp \theta_{\min}\lp \X,\Y\rp \rp }\right].$
\end{thm}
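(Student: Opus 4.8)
The plan is to mimic the structure of the proof of Theorem~\ref{thm:mixsincos}, but instead of relying only on $s_{\min}(X^HY)=\cos(\theta_{\max}(\X,\Y))$, we keep track of the full vector of singular values of $X^HY$, which is $\cos(\Theta^\downarrow(\X,\Y))$, and combine it with a squared version of the commutator-type majorization. First I would invoke the appropriate variant of Lemma~\ref{lemma:commutor2} (applied with $A:=X^HAX$, $B:=Y^HAY$, $T:=X^HY$), but in the form that bounds $\left|\Lambda(X^HAX)-\Lambda(Y^HAY)\right|^2$ entry-wise or weakly by a product involving $S^{-2}(X^HY)$ and $S^2(X^HAX(X^HY)-(X^HY)Y^HAY)$. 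Concretely, the goal is to obtain
\begin{equation}\label{eqn:sq_plan}
\left|\Lambda(X^HAX)-\Lambda(Y^HAY)\right|^2 \prec_w \frac{S^2\lp X^HAX(X^HY)-(X^HY)Y^HAY\rp }{\cos^2\lp \Theta^\downarrow(\X,\Y)\rp },
\end{equation}
where the division is entry-wise and both vectors on the right are arranged so that the smallest $\cos^2$ (largest angle) pairs with the largest singular value of the residual-commutator. This pairing is exactly what makes the product-of-decreasing-vectors majorization lemmas (quoted right after Theorem~\ref{thm:convex} in the excerpt, and proved in the Appendix) applicable: if $u\prec_w v$ with $u,v$ nonnegative decreasing, then $u\cdot w\prec_w v\cdot w$ for nonnegative decreasing $w$.

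Next I would reuse, verbatim, the decomposition already established in \eqref{eqn:maj6} and \eqref{eqn:prothm}: namely
\[
X^HAX(X^HY)-(X^HY)Y^HAY=-X^HA\QXP\QXP^HY+X^H\QYP\QYP^HAY,
\]
with $S(X^HA\QXP\QXP^HY)=S(P_\Y R_X)$ and $S(X^H\QYP\QYP^HAY)=S(P_\X R_Y)$. Applying Theorem~\ref{thm:fan} to the sum of these two matrices gives the weak majorization
\[
S\lp X^HAX(X^HY)-(X^HY)Y^HAY\rp \prec_w S(P_\Y R_X)+S(P_\X R_Y),
\]
and then squaring (which is an increasing convex function on the nonnegative reals, so Theorem~\ref{thm:convex} applies, or more directly: the componentwise square of a weakly majorized nonnegative decreasing vector is weakly majorized by the square) yields
\[
S^2\lp X^HAX(X^HY)-(X^HY)Y^HAY\rp \prec_w \left\{S(P_\Y R_X)+S(P_\X R_Y)\right\}^2.
\]
Multiplying this weak-majorization relation entry-wise by the fixed nonnegative decreasing vector $1/\cos^2(\Theta^\downarrow(\X,\Y))$ and chaining with \eqref{eqn:sq_plan} through transitivity of $\prec_w$ gives the first claimed bound. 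The $A$-invariant case follows immediately by setting $R_X=0$, so that the $S(P_\Y R_X)$ term disappears and $\{S(P_\X R_Y)\}^2=S^2(P_\X R_Y)$.

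The main obstacle I anticipate is establishing \eqref{eqn:sq_plan} itself, i.e.\ the squared commutator majorization with the \emph{full} singular-value vector of $X^HY$ in the denominator rather than just its minimum; the crude estimate $s_{\min}(X^HY)^{-1}$ would only reproduce a weaker $\theta_{\max}$ bound after squaring. This requires a genuinely sharper lemma than the one used for Theorem~\ref{thm:mixsincos} — presumably a result of the form $\left|\Lambda(A)-\Lambda(B)\right|\prec_w S(AT-TB)\cdot S^{-1}(T)$ (entry-wise product, decreasing order) for Hermitian $A,B$ and invertible $T$, whose squared form feeds \eqref{eqn:sq_plan}. I would check whether such a statement is among the lemmas in the Appendix (e.g.\ the second statement of Lemma~\ref{lemma:commutor2}); if so, the proof is a short assembly as above, and the only care needed is the correct (reverse) ordering when forming the entry-wise product so that the product-majorization lemma can be legitimately applied.
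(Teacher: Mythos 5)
Your proposal follows the paper's proof essentially step for step: the ``sharper lemma'' you anticipate is exactly the \emph{third} statement of Lemma~\ref{lemma:commutor2} (not the second, which involves $S\lp T^{-1}A-BT^{-1}\rp$ rather than $S^2\lp T^{-1}\rp$), and the remaining assembly---reusing \eqref{eqn:maj6}--\eqref{eqn:maj8}, squaring via Theorem~\ref{thm:convex}, identifying $S\lp\lp X^HY\rp^{-1}\rp$ with $1/\cos\lp\Theta^\downarrow\lp\X,\Y\rp\rp$ as in \eqref{eqn:tansq1}, and multiplying the decreasing vectors entry-wise---is precisely the paper's argument. The only caution is that the \emph{unsquared} form you speculate about, $\left|\Lambda\lp A\rp-\Lambda\lp B\rp\right|\prec_w S\lp AT-TB\rp\, S\lp T^{-1}\rp$, is not available (it would immediately yield Conjecture~\ref{thm:conjecture}); only its squared version holds, which is all your argument actually uses.
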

\begin{proof}
%Since $\Theta\lp \X,\Y\rp <\pi/2$, it follows that $X^HY$ is invertible.
We substitute $X^HAX$, $Y^HAY$ and $X^HY$ for $A, B$, and $T$ in the 
third result of Lemma \ref{lemma:commutor2}, and get
\begin{eqnarray*}
&&\left|\Lambda\lp X^HAX\rp -\Lambda\lp Y^HAY\rp \right|^2 \\
&&\prec_w S^2\lp  \lp  X^HY\rp ^{-1}\rp \,  S^2\lp  X^HAX\lp X^HY\rp -\lp X^HY\rp Y^HAY\rp .
\end{eqnarray*}
Definition \ref{thm:2.1def} gives $ S\lp X^HY\rp =\cos\lp \Theta^{\uparrow}\lp \X,\Y\rp \rp $, thus,
\begin{eqnarray}\label{eqn:tansq1}
S\lp  \lp  X^HY\rp ^{-1}\rp =\frac{1}{\cos\lp \Theta^{\downarrow}\lp \X,\Y\rp \rp }.
\end{eqnarray}
 From \eqref{eqn:maj8}, we already have 
\[
S\lp  X^HAX\lp X^HY\rp -\lp X^HY\rp Y^HAY\rp \prec_w \left\{ S\lp P_\Y R_X\rp +S\lp P_\X R_Y\rp \right\}.
\]
Since increasing convex functions preserve weak majorization by Theorem \ref{thm:convex}, we take the function 
$f\lp x\rp =x^2$ for nonnegative $x$. Squaring both sides of the weak majorization inequality above yields 
\[
S^2\lp  X^HAX\lp X^HY\rp -\lp X^HY\rp Y^HAY\rp \prec_w \left\{ S\lp P_\Y R_X\rp +S\lp P_\X R_Y\rp \right\}^2.
\]
Together with \eqref{eqn:tansq1}, this proves the first statement of Theorem \ref{thm:squaretan_1}.
If the subspace $\X$ is $A$-invariant, then $R_X=0$, which completes the proof.
\end{proof}

Let us highlight, that one cannot take the square root from both sides of the weak majorization inequalities in Theorem \ref{thm:squaretan_1}. 
Without the squares, we can prove bound \eqref{eqn:conjecture} of Conjecture \ref{thm:conjecture}
but with an extra multiplier.

\begin{thm}\label{thm:conditiontan2} 
 Under the assumptions of Conjecture \ref{thm:conjecture}, we have 
\[
\left|\Lambda\lp X^HAX\rp -\Lambda\lp Y^HAY\rp \right|  \prec_w
\sqrt{c}\, \frac{\left\{ S\lp P_\Y R_X\rp +S\lp P_\X R_Y\rp \right\}}{\cos\lp \Theta^\downarrow\lp \X,\Y\rp \rp },
\]
where $c=\cos\lp \theta_{\min}\lp \X,\Y\rp \rp /\cos\lp \theta_{\max}\lp \X,\Y\rp \rp $. If the subspace $\X$ is $A$-invariant, then
\[
\left|\Lambda\lp X^HAX\rp -\Lambda\lp Y^HAY\rp \right|\prec_{w}\sqrt{c}\,\frac{S\lp P_\X R_Y\rp }{\cos\lp \Theta^\downarrow\lp \X,\Y\rp \rp }.
\]
\end{thm}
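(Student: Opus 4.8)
The plan is to follow the same route as the proof of Theorem~\ref{thm:mixsincos}, but instead of bounding $S\lp \lp X^HY\rp^{-1}\rp$ crudely by its largest entry $1/\cos\lp\theta_{\max}\rp$, I would split the factor $S\lp\lp X^HY\rp^{-1}\rp$ into a constant part and a decreasing-vector part. Write
\[
\frac{1}{\cos\lp\Theta^\downarrow\lp\X,\Y\rp\rp}
=\sqrt{\frac{1}{\cos\lp\theta_{\max}\rp}\cdot\frac{1}{\cos\lp\Theta^\downarrow\lp\X,\Y\rp\rp}}\cdot
\sqrt{\frac{\cos\lp\theta_{\max}\rp}{\cos\lp\Theta^\downarrow\lp\X,\Y\rp\rp}}\,,
\]
but a cleaner bookkeeping is: from \eqref{eqn:maj7} we have $s_{\min}\lp X^HY\rp=\cos\lp\theta_{\max}\rp$ in the denominator, and from \eqref{eqn:tansq1} we know the full vector $S\lp\lp X^HY\rp^{-1}\rp=1/\cos\lp\Theta^\downarrow\rp$. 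I want to interpolate between these two known estimates.

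The key algebraic identity I would use is
\[
\frac{1}{\cos\lp\Theta^\downarrow\lp\X,\Y\rp\rp}
=\frac{1}{s_{\min}\lp X^HY\rp}\cdot\frac{s_{\min}\lp X^HY\rp}{\cos\lp\Theta^\downarrow\lp\X,\Y\rp\rp}
=\frac{1}{\cos\lp\theta_{\max}\rp}\cdot\frac{\cos\lp\theta_{\max}\rp}{\cos\lp\Theta^\downarrow\lp\X,\Y\rp\rp},
\]
where the second factor is a decreasing vector with entries in $[\cos\lp\theta_{\max}\rp/\cos\lp\theta_{\max}\rp,\ \cos\lp\theta_{\max}\rp/\cos\lp\theta_{\min}\rp]=[1/c,\,1]\cdot$ wait---its entries lie between $\cos\lp\theta_{\max}\rp/\cos\lp\theta_{\min}\rp=1/c$ and $1$, so it is bounded entrywise by $1$ but that throws everything away. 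Instead I would square: starting from Theorem~\ref{thm:squaretan_1},
\[
\left|\Lambda\lp X^HAX\rp-\Lambda\lp Y^HAY\rp\right|^2\prec_w
\frac{\left\{S\lp P_\Y R_X\rp+S\lp P_\X R_Y\rp\right\}^2}{\cos^2\lp\Theta^\downarrow\lp\X,\Y\rp\rp},
\]
and then write $\dfrac{1}{\cos^2\lp\Theta^\downarrow\rp}=\dfrac{1}{\cos\lp\theta_{\max}\rp}\cdot\dfrac{\cos\lp\theta_{\max}\rp}{\cos^2\lp\Theta^\downarrow\rp}$; the second factor has entries at most $\cos\lp\theta_{\max}\rp/\cos^2\lp\theta_{\max}\rp=1/\cos\lp\theta_{\max}\rp$ and at least $\cos\lp\theta_{\max}\rp/\cos^2\lp\theta_{\min}\rp$, so this is still not quite $1/\lp c\cos^2\lp\Theta^\downarrow\rp\rp$. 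The right grouping is
\[
\frac{1}{\cos^2\lp\Theta^\downarrow\rp}\ \le\ \frac{c}{\cos^2\lp\Theta^\downarrow\rp}\cdot\frac{1}{\cos^2\lp\Theta^\downarrow\rp}\cdot\frac{1}{c}\cdot\cos^2\lp\Theta^\downarrow\rp,
\]
which is getting circular; the honest statement is that the claimed bound squared reads
$c\,\{S\lp P_\Y R_X\rp+S\lp P_\X R_Y\rp\}^2/\cos^2\lp\Theta^\downarrow\rp$, so we must show the RHS of Theorem~\ref{thm:squaretan_1} is weakly majorized, after taking square roots, by $\sqrt c\,\{\cdots\}/\cos\lp\Theta^\downarrow\rp$. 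Since square root does \emph{not} preserve weak majorization in general, the actual mechanism must be different.

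The correct approach, which I would carry out, is to return to \eqref{eqn:maj7} and apply the submultiplicativity-type majorization lemma (the second or third statement of Lemma~\ref{lemma:commutor2}) in a form that produces a \emph{product} of two decreasing vectors rather than one decreasing vector times a scalar. Concretely, bound $|\Lambda\lp X^HAX\rp-\Lambda\lp Y^HAY\rp|\prec_w u\cdot v$ where $u=S\lp\lp X^HY\rp^{-1/2}\rp=1/\sqrt{\cos\lp\Theta^\downarrow\rp}$ comes from one copy of $\lp X^HY\rp^{-1}$ and a second factor $v$ in which one copy of $(X^HY)^{-1/2}$ is estimated by its operator norm $1/\sqrt{\cos\lp\theta_{\max}\rp}$ and then recombined with $\{S\lp P_\Y R_X\rp+S\lp P_\X R_Y\rp\}$ and an extra $\sqrt{\cos\lp\theta_{\min}\rp}$ absorbed from a norm estimate on the residual side. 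Then $u\cdot v = \sqrt{\cos\lp\theta_{\min}\rp/\cos\lp\theta_{\max}\rp}\cdot\{\cdots\}/\lp\sqrt{\cos\lp\Theta^\downarrow\rp}\cdot\sqrt{\cos\lp\Theta^\downarrow\rp}\rp=\sqrt c\{\cdots\}/\cos\lp\Theta^\downarrow\rp$, using that the product of two identical decreasing vectors $1/\sqrt{\cos\lp\Theta^\downarrow\rp}$ is $1/\cos\lp\Theta^\downarrow\rp$, which is again decreasing so the entrywise product is also the decreasing rearrangement. The invariant-subspace case follows at once by setting $R_X=0$ and $S\lp P_\Y R_X\rp=0$.

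The main obstacle I anticipate is exactly this splitting step: one must apply the matrix majorization inequality $|\Lambda\lp T A T^{-1} \text{-type}\rp|\prec_w S(T)\,S(\cdot)\,S(T^{-1})$ (or the polar-decomposition version of Lemma~\ref{lemma:commutor2}) so as to distribute the $\cos$ factors as $\cos\lp\Theta^\downarrow\rp = \sqrt{\cos\lp\Theta^\downarrow\rp}\cdot\sqrt{\cos\lp\Theta^\downarrow\rp}$ and replace only \emph{one} of the two half-powers by its extreme value $\cos\lp\theta_{\max}\rp$, while simultaneously picking up the compensating $\sqrt{\cos\lp\theta_{\min}\rp}$ from a norm bound $\||\,P_\Y R_X\,|\|$-type estimate or from $s_{\max}$ of the residual product. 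Getting the constant $c=\cos\lp\theta_{\min}\rp/\cos\lp\theta_{\max}\rp$ rather than something worse requires choosing the symmetric split $T^{-1}=T^{-1/2}T^{-1/2}$ and being careful that after one $T^{-1/2}$ is replaced by the scalar $1/\sqrt{s_{\min}(T)}=1/\sqrt{\cos\lp\theta_{\max}\rp}$, the leftover scalar $\sqrt{\cos\lp\theta_{\min}\rp}$ that appears is indeed $s_{\max}$ of the \emph{other} half-power times something already accounted for. I would verify this by writing the intermediate inequality explicitly as a product of three decreasing vectors and collapsing two of them, checking that no weak-majorization step is reversed.
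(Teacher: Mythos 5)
Your proposal does not close the argument: the final ``correct approach'' is described only as a plan, and its crucial step---splitting $\lp X^HY\rp^{-1}$ into two half-powers, replacing one of them by its operator norm $1/\sqrt{\cos\lp\theta_{\max}\lp\X,\Y\rp\rp}$, and then ``picking up the compensating $\sqrt{\cos\lp\theta_{\min}\lp\X,\Y\rp\rp}$ from a norm estimate on the residual side''---is never substantiated. None of the tools you invoke (the statements of Lemma~\ref{lemma:commutor2}, or Theorem~\ref{thm:general}) produces a factor $s_{\max}\lp X^HY\rp=\cos\lp\theta_{\min}\lp\X,\Y\rp\rp$ in the \emph{numerator}: your decomposition only ever involves negative powers of $X^HY$, so the best such a route yields is the bound of Theorem~\ref{thm:mixsincos} (one scalar $1/\cos\lp\theta_{\max}\rp$) or the squared bound of Theorem~\ref{thm:squaretan_1}, and you correctly observe yourself that square roots do not preserve weak majorization. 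You have, in effect, identified the obstacle but not overcome it.

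The paper's proof uses a tool you did not consider. Since $X^HY$ is invertible, $Y^HAY$ is \emph{similar} to $\lp X^HY\rp Y^HAY\lp X^HY\rp^{-1}$, so
\[
\Lambda\lp X^HAX\rp-\Lambda\lp Y^HAY\rp=\Lambda\lp U_1D_1U_1^H\rp-\Lambda\lp \lp X^HY\rp U_2D_2U_2^H\lp X^HY\rp^{-1}\rp ,
\]
with $U_1,U_2$ unitary from the spectral decompositions of $X^HAX$ and $Y^HAY$. Theorem~\ref{thm:condition} (Bhatia's eigenvalue perturbation bound for diagonalizable matrices) then gives
\[
\left|\Lambda\lp X^HAX\rp-\Lambda\lp Y^HAY\rp\right|\prec_w\left[\kappa\lp X^HY\rp\right]^{1/2}S\lp X^HAX-\lp X^HY\rp Y^HAY\lp X^HY\rp^{-1}\rp ,
\]
and the factor $\sqrt{c}$ is exactly $\left[\kappa\lp X^HY\rp\right]^{1/2}=\sqrt{s_{\max}\lp X^HY\rp/s_{\min}\lp X^HY\rp}$---this is the only place $\cos\lp\theta_{\min}\rp$ enters, and it comes from the condition number, not from the residual. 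Writing $X^HAX-\lp X^HY\rp Y^HAY\lp X^HY\rp^{-1}=\left[X^HAX\lp X^HY\rp-\lp X^HY\rp Y^HAY\right]\lp X^HY\rp^{-1}$ and applying Theorem~\ref{thm:general} together with \eqref{eqn:maj8} and \eqref{eqn:tansq1} finishes the proof. Without Theorem~\ref{thm:condition} (or an equivalent condition-number perturbation result) your outline cannot generate the $\sqrt{c}$ factor, so the proposal as written has a genuine gap.
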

\begin{proof}
The assumption $\Theta\lp \X,\Y\rp <\pi/2$ implies that $X^HY$ is invertible, so $Y^HAY$ is similar to the matrix $\lp  X^HY\rp Y^HAY\lp  X^HY\rp ^{-1}$.
The matrix $A$ is Hermitian, so are $X^HAX$ and $Y^HAY$. From the spectral decomposition, we have $X^HAX=U_1D_1U_{1}^{H}$, 
where $U_1$ is unitary and $D_1$ is diagonal. Similarly, $Y^HAY=U_2D_2U_{2}^{H}$, where $U_2$ is unitary and $D_2$ is diagonal.
As a consequence, we have
\begin{eqnarray*} 
\Lambda\lp  X^HAX\rp -\Lambda\lp  Y^HAY\rp &=&\Lambda\lp  X^HAX\rp -\Lambda\lp  \lp  X^HY\rp Y^HAY\lp  X^HY\rp ^{-1}\rp \\
&=&\Lambda\lp  U_1D_1U_1^H\rp -\Lambda\lp  \lp  X^HY\rp U_2D_2U_2^H\lp  X^HY\rp ^{-1}\rp .
\end{eqnarray*}
Applying Theorem \ref{thm:condition}, we obtain 
\begin{eqnarray*}
&&\begin{array}{l}|\Lambda\lp  X^HAX\rp -\Lambda\lp  Y^HAY\rp | \end{array}\\ &&\prec_w\left[\kappa\lp U_1\rp \kappa\lp X^HYU_2\rp \right]^{1/2}S\lp  X^HAX-\lp  X^HY\rp Y^HAY\lp  X^HY\rp ^{-1}\rp .
\end{eqnarray*}  
 Furthermore, the condition number of $U_1$ is $1$ and the condition number of $X^HYU_2$ 
 is equal to the condition number of $X^HY$, i.e.
$\kappa\lp U_1\rp =1$ and $\kappa\lp X^HYU_2\rp =\kappa\lp X^HY\rp .$
Moreover, we have
\begin{eqnarray*}
&&X^HAX-\lp  X^HY\rp Y^HAY\lp  X^HY\rp ^{-1}\\
&&=\left[X^HAX\lp  X^HY\rp -\lp  X^HY\rp Y^HAY\right]\lp  X^HY\rp ^{-1}.
\end{eqnarray*}
Consequently, 
\begin{eqnarray}\label{eqn:ch6_3}
&&\begin{array}{l}\left|\Lambda\lp  X^HAX\rp -\Lambda\lp  Y^HAY\rp \right| \end{array}\nonumber\\ 
&&\prec_w\left[\kappa\lp X^HY\rp \right]^{1/2}S\left[X^HAX\lp  X^HY\rp -\lp  X^HY\rp Y^HAY\right]\,  S\lp  \lp  X^HY\rp ^{-1}\rp .
\end{eqnarray}
Substituting \eqref{eqn:maj8} and \eqref{eqn:tansq1} into \eqref{eqn:ch6_3} completes the proof.
\end{proof}

The bounds in Theorems \ref{thm:mixsincos}, \ref{thm:squaretan_1}, and 
\ref{thm:conditiontan2} are different, but comparable and, in some particular cases, actually the same.
 For example, the bounds for the largest component
$\max\left|\Lambda\lp X^HAX\rp -\Lambda\lp Y^HAY\rp \right|$
are the same in both Theorem~\ref{thm:mixsincos} and Theorem~\ref{thm:squaretan_1}. 
If $\dim\lp \X\rp =\dim\lp \Y\rp =1$, then the bounds in all three theorems are the same as the 
first inequality in \eqref{eqn:chap6_1}.

Theorem \ref{thm:conditiontan2} may be tighter for the sum of $\left|\Lambda\lp X^HAX\rp -\Lambda\lp Y^HAY\rp \right|$, compared to that of Theorems \ref{thm:mixsincos} and \ref{thm:squaretan_1}, since 
\[
1/\cos\lp \Theta^\downarrow\lp \X,\Y\rp \rp \leq [1/\cos\lp \theta_{\max}\lp \X,\Y\rp \rp ,\ldots,1/\cos\lp \theta_{\max}\lp \X,\Y\rp \rp ].
\]
 
Theorems \ref{thm:mixsincos}, \ref{thm:squaretan_1}, and~\ref{thm:conditiontan2} provide various alternatives to conjecture \eqref{eqn:conjecture}, all involving  the singular values of $P_\X R_Y$ and $P_\Y R_X$. Our second conjecture
\eqref{eqn:conjecture_tan} relies instead on the singular values of $P_{\X+\Y} R_Y$ and $P_{\X+\Y} R_X$. 
We next clarify the relationship between these singular values.
%of $P_\X R_Y$ and $P_\Y R_X$ and  the singular values of $P_{\X+\Y} R_Y$ and $P_{\X+\Y} R_X$. 

\begin{lemma}\label{lem:ch6_maj3}
%In the notation of Theorem \ref{thm:mixsincos}, 
% Under the assumptions of Conjecture \ref{thm:conjecture},
We have
$
S\lp P_\X R_Y\rp \prec_w S\lp P_{\X+\Y}R_Y\rp \, \sin\lp \Theta\lp \X,\Y\rp \rp $ and, similarly,  $S\lp P_\Y R_X\rp \prec_w S\lp P_{\X+\Y}R_X\rp \, \sin\lp \Theta\lp \X,\Y\rp \rp .$
%where $S\lp \cdot\rp \sin\lp \Theta\lp \X,\Y\rp \rp =\left[s_{\max}\lp \cdot\rp \sin\lp \theta_{\max}\lp \X,\Y\rp \rp ,\ldots,s_{\min}\lp \cdot\rp \sin\lp \theta_{\min}\lp \X,\Y\rp \rp \right].$
\end{lemma}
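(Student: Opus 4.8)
The plan is to exploit the structure of the residual together with the orthogonal decomposition of $\X+\Y$ induced by $\Y$; I would work in the setting $\dim\lp\X\rp=\dim\lp\Y\rp$ of Conjecture \ref{thm:conjecture}. Recall $R_Y=AY-Y\lp Y^HAY\rp=\lp I-P_\Y\rp AY$, so every column of $R_Y$ lies in $\YP$, i.e.\ $P_\Y R_Y=0$. Since $\Y\subseteq\X+\Y$, I would split $\X+\Y$ orthogonally as $\X+\Y=\mathcal{M}\oplus\Y$ with $\mathcal{M}:=\lp\X+\Y\rp\cap\YP$, so that $P_{\X+\Y}=P_\mathcal{M}+P_\Y$ and hence $P_{\X+\Y}R_Y=P_\mathcal{M}R_Y$. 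Letting the columns of a matrix $M$ form an orthonormal basis of $\mathcal{M}$, so $P_\mathcal{M}=MM^H$, and using $X^HP_{\X+\Y}=X^H$ (because $\X\subseteq\X+\Y$), I obtain the factorization
\[
X^HR_Y=X^HP_{\X+\Y}R_Y=X^HP_\mathcal{M}R_Y=\lp X^HM\rp\lp M^HR_Y\rp.
\]

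Next I would rewrite the quantities of interest through singular values that are easy to track. Since $P_\X=XX^H$ with $X$ of orthonormal columns, $S\lp P_\X R_Y\rp=S\lp XX^HR_Y\rp=S\lp X^HR_Y\rp$; likewise $S\lp M^HR_Y\rp=S\lp MM^HR_Y\rp=S\lp P_\mathcal{M}R_Y\rp=S\lp P_{\X+\Y}R_Y\rp$. Then I would invoke the standard weak majorization $S\lp CD\rp\prec_w S\lp C\rp\,S\lp D\rp$ for singular values of a product (see, e.g.,\ \cite{bhatia_book,marshall79}; it also follows from Horn's log-majorization together with Theorem \ref{thm:convex} applied to $f\lp t\rp=e^t$), which gives
\[
S\lp P_\X R_Y\rp=S\lp\lp X^HM\rp\lp M^HR_Y\rp\rp\prec_w S\lp X^HM\rp\,S\lp P_{\X+\Y}R_Y\rp.
\]
The right-hand side is already decreasing, being an entrywise product of two decreasing nonnegative vectors, so it remains only to identify $S\lp X^HM\rp=\sin\lp\Theta\lp\X,\Y\rp\rp$.

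The main step, and the one I expect to demand the most care, is this geometric identity. I would compute
\[
\lp X^HM\rp\lp X^HM\rp^H=X^HMM^HX=X^HP_\mathcal{M}X=X^H\lp P_{\X+\Y}-P_\Y\rp X=I-\lp X^HY\rp\lp X^HY\rp^H,
\]
using $X^HP_{\X+\Y}X=X^HX=I$ and $P_\Y=YY^H$. Thus the squared singular values of $X^HM$ are the eigenvalues $1-s_i^2\lp X^HY\rp$, and by Definition \ref{thm:2.1def} the numbers $s_i\lp X^HY\rp$ are the cosines of the principal angles between $\X$ and $\Y$; hence the singular values of $X^HM$ are the corresponding sines, and since $\sin$ is increasing on $[0,\pi/2]$ arranging these values in decreasing order amounts to arranging the angles in decreasing order, so $S\lp X^HM\rp=\sin\lp\Theta^{\downarrow}\lp\X,\Y\rp\rp=\sin\lp\Theta\lp\X,\Y\rp\rp$. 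This proves the first inequality; the second follows by the symmetric argument, with $R_X=\lp I-P_\X\rp AX$ having columns in $\XP$, with $\mathcal{N}:=\lp\X+\Y\rp\cap\XP$, $P_{\X+\Y}=P_\mathcal{N}+P_\X$, and $S\lp Y^HN\rp=\sin\lp\Theta\lp\Y,\X\rp\rp=\sin\lp\Theta\lp\X,\Y\rp\rp$ for an orthonormal basis matrix $N$ of $\mathcal{N}$.

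The obstacle here is bookkeeping rather than depth. One must carefully justify the orthogonal decomposition $\X+\Y=\mathcal{M}\oplus\Y$ and the projector identity $P_{\X+\Y}=P_\mathcal{M}+P_\Y$, cover the degenerate case $\mathcal{M}=\{0\}$ (when $\X=\Y$), and keep all singular-value and angle vectors at the common length $\dim\lp\X\rp$ by zero-padding when $\X\cap\Y\neq\{0\}$; the padding is harmless, since a vanishing principal angle contributes a zero to both sides. Everything else is a routine application of known singular-value majorization inequalities.
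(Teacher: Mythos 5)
Your proof is correct and follows essentially the same route as the paper's: both insert the projector $P_{\X+\Y}$ "for free" between the residual factor and a factor whose singular values are $\sin\lp\Theta\lp\X,\Y\rp\rp$, and then split the product by the weak majorization $S\lp CD\rp\prec_w S\lp C\rp\,S\lp D\rp$ (Theorem \ref{thm:general}). The only cosmetic differences are that the paper works with the transposed quantity $S\lp Y^HAP_\YP X\rp$ and cites $S\lp P_\YP X\rp=\sin\lp\Theta\lp\X,\Y\rp\rp$ directly, whereas you package the same insertion through the orthonormal basis $M$ of $\lp\X+\Y\rp\cap\YP$ and rederive the sine identity from the Gram matrix $I-\lp X^HY\rp\lp X^HY\rp^H$.
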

\begin{proof}
Since the singular values are invariant under unitary transforms and the matrix conjugate transpose, we get
$
S\lp P_\X R_Y\rp =S\lp X^H P_\YP AY\rp =S\lp Y^HAP_\YP X\rp .
$
The identities
$Y^HAP_\YP X=Y^HAP_\YP P_\YP X= Y^HAP_\YP P_{\X+\Y} P_\YP X$
hold, since 
\[
P_\YP X= X-P_\Y X= P_{\X+\Y}\lp  X- P_\Y X\rp = P_{\X+\Y} P_\YP X,
\]
where every column of the matrix $X-P_\Y X$ evidently belongs to the subspace $\X+\Y.$
Thus, Theorem \ref{thm:general} gives
\begin{eqnarray*}
S\lp Y^HAP_\YP X\rp &\prec_w& S\lp  Y^HAP_\YP P_{\X+\Y} \rp \,  S\lp P_\YP X\rp =
S\lp  P_{\X+\Y} P_\YP AY\rp \,  S\lp P_\YP X\rp \\
&=&S\lp P_{\X+\Y} R_Y\rp \,  S\lp P_\YP X\rp .
\end{eqnarray*}
The singular values of $P_\YP X$ coincide with the sines of the
principal angles between $\X$ and $\Y$, e.g.,\ see \cite{bjorck73,ka02}, since
$\dim\lp \X\rp =\dim\lp \Y\rp $, i.e. $S\lp P_\YP X\rp =\sin\lp \Theta\lp \X,\Y\rp \rp $. This proves
$S\lp P_\X R_Y\rp \prec_w S\lp P_{\X+\Y} R_Y\rp \sin\lp \Theta\lp \X,\Y\rp \rp $. 

The second bound similarly follows from 
$S\lp P_\Y R_X\rp \prec_w S\lp P_{\X+\Y} R_X\rp S\lp P_\XP Y\rp $
since $S\lp P_\XP Y\rp =\sin\lp \Theta\lp \X,\Y\rp \rp $ due to the symmetry of PABS 
and $\dim\X=\dim\Y$.
\end{proof}

We note that \eqref{eqn:conjecture} implies \eqref{eqn:conjecture_tan} using Lemma \ref{lem:ch6_maj3}. 
We can also combine Theorems~\ref{thm:mixsincos}, \ref{thm:squaretan_1}, and~\ref{thm:conditiontan2} with  Lemma \ref{lem:ch6_maj3}
to easily obtain several tangent-based bounds. 

\begin{cor}\label{thm:mixsincos1}
 Under the assumptions of Conjecture \ref{thm:conjecture},
%Under the assumptions of Theorem \ref{thm:mixsincos}, 
we have 
\[
\left|\Lambda\lp X^HAX\rp -\Lambda\lp Y^HAY\rp \right| \prec_w
\left\{ S\lp P_{\X+\Y}R_X\rp +S\lp P_{\X+\Y}R_Y\rp \right\} \frac{\sin\lp \Theta\lp \X,\Y\rp \rp }{\cos\lp \theta_{\max}\lp \X,\Y\rp \rp },
\]
\[
\left|\Lambda\lp X^HAX\rp -\Lambda\lp Y^HAY\rp \right|^2  \prec_w
\left\{ S\lp P_{\X+\Y}R_X\rp +S\lp P_{\X+\Y}R_Y\rp \right\}^2 \,  \tan^2\lp \Theta\lp \X,\Y\rp \rp ,
\]
\[
\left|\Lambda\lp X^HAX\rp -\Lambda\lp Y^HAY\rp \right|  \prec_w
\sqrt{c}\,\left\{ S\lp P_{\X+\Y}R_X\rp +S\lp P_{\X+\Y}R_Y\rp \right\}\, \tan\lp \Theta\lp \X,\Y\rp \rp .
\]
In addition, if the subspace $\X$ is $A$-invariant, then  correspondingly we have
\begin{equation}\label{e:c1}
\left|\Lambda\lp X^HAX\rp -\Lambda\lp Y^HAY\rp \right|  \prec_w
S\lp P_{\X+\Y}R_Y\rp  \frac{\sin\lp \Theta\lp \X,\Y\rp \rp }{\cos\lp \theta_{\max}\lp \X,\Y\rp \rp },
\end{equation}
\begin{equation}\label{e:c2}
\left|\Lambda\lp X^HAX\rp -\Lambda\lp Y^HAY\rp \right|^2 \prec_w
S^2\lp P_{\X+\Y} R_Y\rp \, \tan^2\lp \Theta\lp \X,\Y\rp \rp ,
\end{equation}
\begin{equation}\label{e:c3}
\left|\Lambda\lp X^HAX\rp -\Lambda\lp Y^HAY\rp \right|\prec_{w}\sqrt{c}\,S\lp P_{\X+\Y} R_Y\rp \, 
\tan\lp \Theta\lp \X,\Y\rp \rp ,
\end{equation}
where $c=\cos\lp \theta_{\min}\lp \X,\Y\rp \rp /\cos\lp \theta_{\max}\lp \X,\Y\rp \rp $.
%%%%%%%%%%%%%%%%%%%%%%%%%%
\end{cor}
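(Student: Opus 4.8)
The plan is to obtain each of the six displayed inequalities by feeding Lemma~\ref{lem:ch6_maj3} into the corresponding bound of Theorem~\ref{thm:mixsincos}, Theorem~\ref{thm:squaretan_1}, or Theorem~\ref{thm:conditiontan2}, and then chaining via transitivity of $\prec_w$ together with the elementary preservation rules for weak majorization collected in Section~\ref{sec:majorization}: invariance under multiplication by a nonnegative scalar, invariance under multiplication by a common nonnegative \emph{decreasing} vector, and preservation under an increasing convex function (Theorem~\ref{thm:convex}, used with $f\lp t\rp =t^2$ for the squared bounds).

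The one ingredient that is not literally one of these rules is the combined form of Lemma~\ref{lem:ch6_maj3},
\[
S\lp P_\Y R_X\rp +S\lp P_\X R_Y\rp \prec_w \left\{S\lp P_{\X+\Y}R_X\rp +S\lp P_{\X+\Y}R_Y\rp \right\}\sin\lp \Theta\lp \X,\Y\rp \rp .
\]
I would derive this by adding the two weak majorizations supplied by Lemma~\ref{lem:ch6_maj3}. Here a small point deserves attention: additivity of weak majorization ($u_1\prec_w v_1$ and $u_2\prec_w v_2$ imply $u_1+u_2\prec_w v_1+v_2$) holds only when the dominating vectors $v_1,v_2$ are \emph{similarly ordered}. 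In our situation they are, because $v_1=S\lp P_{\X+\Y}R_X\rp \sin\lp \Theta\rp $ and $v_2=S\lp P_{\X+\Y}R_Y\rp \sin\lp \Theta\rp $ are each an entry-wise product of nonnegative decreasing vectors ($S\lp \cdot\rp $ is decreasing by convention, and $\sin\lp \Theta\lp \X,\Y\rp \rp $ is decreasing since $\sin$ increases on $[0,\pi/2)$), hence both are decreasing; consequently $\sum_{i=1}^k\lp v_1+v_2\rp _i=\sum_{i=1}^k\lp v_1\rp _i+\sum_{i=1}^k\lp v_2\rp _i$, and splitting, for each $k$, the size-$k$ index set realizing the $k$ largest entries of $u_1+u_2$ between $u_1$ and $u_2$ closes the argument. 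The common factor $\sin\lp \Theta\lp \X,\Y\rp \rp $ is then pulled out on the right.

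With this inequality in hand: the first bound follows by multiplying it by the nonnegative scalar $1/\cos\lp \theta_{\max}\lp \X,\Y\rp \rp $ and chaining with Theorem~\ref{thm:mixsincos}. For the squared bound, apply $f\lp t\rp =t^2$ (Theorem~\ref{thm:convex}) to the combined inequality to get $\{S\lp P_\Y R_X\rp +S\lp P_\X R_Y\rp \}^2\prec_w\{S\lp P_{\X+\Y}R_X\rp +S\lp P_{\X+\Y}R_Y\rp \}^2\sin^2\lp \Theta\rp $, then multiply both sides by the nonnegative decreasing vector $1/\cos^2\lp \Theta^\downarrow\lp \X,\Y\rp \rp $ (positive since $\Theta<\pi/2$), observe $\sin^2\lp \Theta\rp /\cos^2\lp \Theta^\downarrow\rp =\tan^2\lp \Theta\rp $ entry-wise, and chain with Theorem~\ref{thm:squaretan_1}. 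For the third bound, multiply the combined inequality by the nonnegative decreasing vector $1/\cos\lp \Theta^\downarrow\lp \X,\Y\rp \rp $ and by the scalar $\sqrt{c}$, use $\sin\lp \Theta\rp /\cos\lp \Theta^\downarrow\rp =\tan\lp \Theta\rp $, and chain with Theorem~\ref{thm:conditiontan2}. The bounds \eqref{e:c1}, \eqref{e:c2}, and \eqref{e:c3} for the $A$-invariant case are the respective specializations: then $R_X=0$, so $S\lp P_{\X+\Y}R_X\rp $ is the zero vector and that summand drops out of each right-hand side.

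No step here is genuinely hard — the paper's own phrasing ``easily obtain'' is accurate — and the content rests entirely on the already proved Lemma~\ref{lem:ch6_maj3} and Theorems~\ref{thm:mixsincos}--\ref{thm:conditiontan2}. The only places that demand care are the two flagged above: verifying that the dominating vectors in the additivity step are similarly ordered, and checking that every factor pulled into or out of a weak majorization is nonnegative and ordered consistently with the other vectors, so that in particular the entry-wise quotient $\sin\lp \Theta^\downarrow\rp /\cos\lp \Theta^\downarrow\rp $ may legitimately be read as the decreasing vector $\tan\lp \Theta^\downarrow\rp $.
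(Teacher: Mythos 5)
Your proposal is correct and follows exactly the route the paper intends: the paper offers no detailed proof, only the remark that the corollary is obtained by combining Theorems~\ref{thm:mixsincos}, \ref{thm:squaretan_1}, and~\ref{thm:conditiontan2} with Lemma~\ref{lem:ch6_maj3}, and you carry out precisely that combination, correctly flagging the two points (similar ordering of the dominating vectors in the additivity step, and nonnegativity/monotonicity of the factors being multiplied in) that make the chaining legitimate.
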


\section{Discussion}\label{sec:discussion}
In this section, we briefly discuss and compare our new mixed majorization bounds of the absolute changes of  eigenvalues, e.g.,\ where
the subspace $\X$ is $A$-invariant, with some known results, formulated here in our notation.

% \subsection{Ovtchinnikov's 2006 mixed bound}
 Our bounds \eqref{e:c1} and \eqref{e:c2} are stronger than the following particular cases, where $\dim\X=\dim\Y$, of 
 \cite[Theorem 1]{ovtchinnikov62},  
 \[
 \cos\lp \theta_{\max}\lp \X,\Y\rp \rp \max\left|\Lambda\lp X^HAX\rp -\Lambda\lp Y^HAY\rp \right|\leq \sin\lp \theta_{\max}\lp \X, \Y\rp \rp \|R_Y\|,
 \]
and \cite[Remark 3]{ovtchinnikov62}, that uses the Frobenius norm $\|\cdot\|_F$,
 \[
 \cos\lp \theta_{\max}\lp \X,\Y\rp \rp \left\|\Lambda\lp X^HAX\rp -\Lambda\lp Y^HAY\rp \right\|_F\leq \sin\lp \theta_{\max}\lp \X, \Y\rp \rp \|R_Y\|_F.
 \]
 
\subsection{Sun's 1991 majorization bound}
Substituting the $2$-norm in \cite[Theorem 3.3]{sun91} in our notation as follows,
 $\left\|I-X^HYY^HX\right\|_2=\sin^2\lp  \theta_{\max}\lp \X,\Y\rp \rp $, one obtains the bound
%\begin{eqnarray*}
%|\Lambda\lp X^HAX\rp -\Lambda\lp Y^HAY\rp |\prec_w\frac{\|\sin\lp  \Theta\lp \X,\Y\rp \rp \|}{\sqrt{1-\|\sin\lp  \Theta\lp \X,\Y\rp \rp \|^2}}
%S\lp R_Y\rp ,
%\end{eqnarray*} which implies, for instance, that 
$\left|\Lambda\lp  X^HAX\rp -\Lambda\lp  Y^HAY\rp \right|\prec_w S\lp  R_Y\rp \tan\lp  \theta_{\max}\lp \X,\Y\rp \rp ,$
which is weaker compared to our bound \eqref{e:c1}, since 
\begin{eqnarray}\label{eqn:RY}
S\lp  P_{\X+\Y} R_Y\rp \leq s_\max\lp  P_{\X+\Y}\rp S\lp  R_Y\rp =S\lp  R_Y\rp 
 \end{eqnarray}
and
$\sin\lp  \Theta\lp \X,\Y\rp \rp \leq\sin\lp  \theta_{\max}\lp \X,\Y\rp \rp $. 

\subsection{First order a posteriori majorization bounds}
A posteriori majorization bounds in terms of norms of residuals for the Ritz values approximating eigenvalues are known for decades.  
We quote here one of the best such bounds of the first order, i.e. involving the norm rather, then the norm squared,
of the residual.
\begin{thm}[{\cite{{bhatia_book},{sunste},{xie97}}}]\label{thm:majorresidual}
Let $Y$ be an orthonormal $n$ by $p$ matrix and matrix $A$ be Hermitian. 
Then there exist a set of incices $1\leq i_1<i_2<\cdots<i_p\leq n$, 
and some $p$ eigenvalues of $A$ as $\Lambda_I\lp A\rp =\lp  \lambda_{i_1}, \ldots, \lambda_{i_p}\rp $, such that
\begin{eqnarray*}
 |\Lambda_I\lp A\rp -\Lambda\lp Y^HAY\rp |\prec_w [s_1,s_1,s_2,s_2,\ldots] \prec_w 2S\lp  R_Y\rp , 
 \end{eqnarray*}
 where $S\lp R_Y\rp =[s_1,s_2,\ldots,s_p]$. The multiplier $2$ cannot be removed; see \cite[p.~188]{bhatia_book}.
 \end{thm}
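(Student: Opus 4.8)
\textbf{Proof proposal for Theorem \ref{thm:majorresidual}.}

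The plan is to reduce the statement to a block-decomposition argument applied to $A$ in a cleverly chosen orthonormal basis, then invoke a known Lidskii-type majorization inequality. First I would complete $Y$ to a full unitary matrix $[Y,\,Y_\perp]$, so that $A$ is unitarily equivalent to the $2\times2$ block Hermitian matrix $\begin{pmatrix} Y^HAY & Y^HAY_\perp \\ Y_\perp^HAY & Y_\perp^HAY_\perp\end{pmatrix}$. The key observation is that the off-diagonal block $Y_\perp^HAY$ has the same singular values as the residual: indeed $Y_\perp^HAY = Y_\perp^H(AY - Y\rho(Y)) = Y_\perp^H R_Y$ (using $Y_\perp^H Y = 0$), and since $R_Y = (I - YY^H)AY$ already lies in the range of $P_{Y_\perp}$, we get $S(Y_\perp^HAY) = S(R_Y)$. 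Thus zeroing out the off-diagonal block of the block matrix is a Hermitian perturbation $E$ with $E = \begin{pmatrix} 0 & Y^HAY_\perp \\ Y_\perp^HAY & 0\end{pmatrix}$, whose nonzero singular values (equivalently, absolute eigenvalues) are exactly $s_1,s_1,s_2,s_2,\ldots,s_p,s_p$ — each $s_i$ appearing twice, because an anti-diagonal block matrix $\begin{pmatrix}0 & B^H\\ B & 0\end{pmatrix}$ has eigenvalues $\pm s_i(B)$.

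Next I would apply the standard majorization form of Lidskii's eigenvalue perturbation theorem: if $M$ and $M+E$ are Hermitian, then the vector of differences of their sorted eigenvalues is weakly majorized by $\Lambda(E)$ restricted appropriately, and more precisely $|\Lambda(M+E) - \Lambda(M)| \prec_w S(E) = [s_1,s_1,s_2,s_2,\ldots]$ after interleaving and resorting. Here $M$ is the block-diagonal matrix $\mathrm{diag}(Y^HAY,\,Y_\perp^HAY_\perp)$, whose eigenvalues are the $p$ Ritz values together with the $n-p$ eigenvalues of the complementary block, while $M+E$ is (unitarily equivalent to) $A$. Selecting from the sorted eigenvalues of $A$ the $p$ indices $i_1<\cdots<i_p$ that get matched up with the Ritz values in the Lidskii correspondence yields the set $\Lambda_I(A)$, and the majorization $|\Lambda_I(A) - \Lambda(Y^HAY)| \prec_w [s_1,s_1,s_2,s_2,\ldots]$ falls out. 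The final bound $[s_1,s_1,s_2,s_2,\ldots]\prec_w 2S(R_Y)$ is elementary: the $k$-th partial sum of the left side is at most $2(s_1+\cdots+s_{\lceil k/2\rceil}) \le 2(s_1+\cdots+s_k)$.

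The main obstacle — and the only genuinely delicate point — is the bookkeeping that produces the \emph{specific} index set $1\le i_1<\cdots<i_p\le n$ and guarantees that exactly these $p$ eigenvalues pair with the $p$ Ritz values under a single weak-majorization inequality, rather than merely obtaining a majorization between the full eigenvalue vector of $M$ and that of $A$. One clean way around this is to invoke the interlacing/monotonicity structure of the Lidskii correspondence directly, or to cite the result in the exact form already worked out in \cite{bhatia_book,sunste,xie97}; since the statement is explicitly attributed there, it is legitimate to present the block-matrix reduction above as the conceptual skeleton and defer the index-selection combinatorics to those references. I would also remark, as the theorem statement already notes, that the constant $2$ is sharp — witnessed by a $2\times2$ example with $Y$ a single unit vector not aligned with an eigenvector — so no improvement of the second inequality is possible; the reference \cite[p.~188]{bhatia_book} supplies the extremal configuration.
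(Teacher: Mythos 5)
The paper does not actually prove Theorem \ref{thm:majorresidual}; it quotes it as a known result from \cite{bhatia_book,sunste,xie97}, so there is no in-paper proof to compare against. Your reconstruction is nonetheless the standard argument and is essentially correct: the unitary reduction to the block form, the identification $S(Y_\perp^HAY)=S(R_Y)$, the fact that the off-diagonal perturbation $E$ has singular values $s_1,s_1,s_2,s_2,\ldots$ (each doubled, plus zeros), the appeal to the Weyl--Lidskii majorant theorem (the paper's Theorem \ref{thm:bhatia1}), and the elementary partial-sum comparison with $2S(R_Y)$ are all right.

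The one step you defer --- producing the index set $i_1<\cdots<i_p$ --- is less delicate than you suggest and does not require any ``Lidskii correspondence'' machinery. Let $M=\mathrm{diag}(Y^HAY,\,Y_\perp^HAY_\perp)$ and let $j_1<\cdots<j_p$ be the positions occupied by the $p$ Ritz values inside the decreasingly sorted spectrum of $M$ (ties resolved arbitrarily), so that the $k$-th largest Ritz value equals $\lambda_{j_k}(M)$. Take $i_k=j_k$. Then the vector $|\Lambda_I(A)-\Lambda(Y^HAY)|$ is exactly the subvector of the full nonnegative difference vector $v=|\Lambda(A)-\Lambda(M)|$ at positions $j_1,\ldots,j_p$, and a subvector of a nonnegative vector inherits weak majorization: the sum of the $k$ largest entries of the subvector is at most the sum of the $k$ largest entries of $v$, hence at most $\sum_{i=1}^k w_i$ for $w=S(E)=[s_1,s_1,s_2,s_2,\ldots]$. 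That closes the argument in two lines. With that observation supplied, your proof is complete and matches the classical one in the cited sources.
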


It is important to realize that in our bounds the choice of subspaces $\X$ and $\Y$ is arbitrary, while
in Theorem \ref{thm:majorresidual} one cannot choose the subspace $\X.$ 
The implication of this fact is that
we can choose $\X$ in our bounds, such that $\theta_{\max}\lp \X,\Y\rp \leq \pi/4$, to make our bounds sharper. Next, we describe some situations
where principal angles are less than $\pi/4.$

\begin{enumerate}
\item \cite{george00} Let $A$ be a Hermitian quasi-definite matrix, i.e.
\begin{eqnarray} 
	A&=&\left[\begin{array}{lr}
         H & B^H\\
         B & -G
\end{array}\right],
\end{eqnarray} 
where $H \in \C^{k\times k}$and $G \in \C^{\lp n-k\rp  \times \lp n-k\rp }$ are 
Hermitian positive definite matrices. Let the subspace $\X$ be spanned by the eigenvectors 
of $A$ corresponding to $p$ eigenvalues which have the same sign. 
Let the subspace $\Y$ be spanned by $e_1,\ldots, e_p$ and $\Z$ be spanned by 
$e_{n-p+1},\ldots, e_{n}$, where $e_i$ is the coordinate vector. Then,
if the eigenvalues corresponding to the eigenspace $\X$ are positive, we have 
$
\theta_{\max}\lp \X,\Y\rp  < \pi/4.
$
If the eigenvalues corresponding to the eigenspace $\X$ are negative, we have  
$
\theta_{\max}\lp \X,\Z\rp  < \pi/4.
$
	\item \cite[p.~64]{bosner09} Let $[\QY\, \QYP]$	be unitary. Suppose 
	$\lambda_{\max}\lp Y^HAY\rp  < \lambda_{\min}\lp \QYP^HA\QYP\rp $ and $\X$ is the space spanned 
	by the eigenvectors corresponding to the $p$ smallest eigenvalues of $A$. Then 
$	\theta_{\max}\lp \X,\Y\rp  < \pi/4$.
	\item \cite[$\sin\lp 2\theta\rp $ and Theorem 8.2]{kahan} 
	Let $A$ be Hermitian and let $[\QX\, \QXP]$	be 
unitary with $X\in \C^{n\times p}$, such that
$
[\QX\, \QXP]^HA[\QX\, \QXP]=\diag\lp L_1,L_2\rp .
$ 
 Let $Y \in \C^{n \times p}$ be with orthonormal columns and $H_Y=Y^HAY.$
Let there be $\delta >0$, such that $\Lambda\lp L_1\rp  \in [\alpha, \beta]$ and
	 $\Lambda\lp L_2\rp  \in \R\setminus[\alpha-\delta, \beta+\delta]$. 
	Let  $\Lambda\lp H_Y\rp  \in [\alpha-\frac{\delta}{2}, \beta-\frac{\delta}{2}]$. 
	Then $\theta_{\max}\lp \X,\Y\rp  < \pi/4$. 
\end{enumerate}

Theorem \ref{thm:majorresidual} gives a first order error bound using $S\lp  R_Y\rp $. 
Under additional assumptions, there are similar, but second order, also called quadratic, i.e. involving the square $S^2\lp  R_Y\rp $, a posteriori error bounds; e.g., \cite{Mathias98,ovtchinnikov62}.
Next we check how some known bounds (see, e.g.,\ \cite{kahan,Mol16,Nakatsukasa12,Xie1997} and references there) for the angles $\Theta\lp \X,\Y\rp$ in terms of $S\lp  R_Y\rp $
can be combined with our tangent-based results, leading to various 
second order a posteriori error bounds, comparable to those in \cite{Mathias98,ovtchinnikov62}. 

\subsection{Quadratic a posteriori majorization bounds}
The second order a posteriori error bounds involve the term $S^2\lp  R_Y\rp $ and a gap, 
see, e.g., \cite{Mathias98,ovtchinnikov62} and references there.
In \cite[$\sin\lp \theta\rp $ and $\tan\lp \theta\rp $ Theorem]{kahan} and its variations, e.g.,\ \cite{Mol16,Nakatsukasa12}, bounds  of $\sin\lp  \Theta\lp \X,\Y\rp \rp $ and $\tan\lp  \Theta\lp \X,\Y\rp \rp $ 
above in terms of $S\lp  R_Y\rp $ and the gap are derived.  Two known theorems bounding principal angles between an exact invariant subspace $\X$ and its approximation $\Y$ are presented below, the first one for the $\sin\lp  \Theta\lp \X,\Y\rp \rp $.
\begin{thm}[{\cite[$\sin{\Theta}$ Theorem]{kahan}}]\label{thm:residualsin}
Let $A$ be Hermitian and let $[\QX\, \QXP]$	be unitary with $\QX\in \C^{n\times p}$, such that
$[\QX\, \QXP]^HA[\QX\, \QXP]=\diag\lp L_1,L_2\rp.$
 Let $Y \in \C^{n \times p}$ have orthonormal columns and $R_Y=AY-YH_Y$ with $H_Y=Y^HAY.$ 
 If $\Lambda\lp H_Y\rp\subset [a, b]$ and $\Lambda\lp L_2 \rp\subset \R\setminus [a-\delta,b+\delta]$ with $\delta >0$, then
 \begin{eqnarray*}\label{eqn:sinthm1}
 \sin\lp  \Theta\lp \X,\Y\rp \rp \prec_w \frac{S \lp R_Y \rp}{\delta}.
 \end{eqnarray*}
\end{thm}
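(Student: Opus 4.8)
The plan is to reduce the bound to a standard Sylvester-equation argument. First I would write $\QX^H Y$ and $\QXP^H Y$ for the components of $Y$ relative to the orthogonal decomposition $\C^n = \X \oplus \XP$ furnished by the unitary matrix $[\QX\ \QXP]$. Since $S\lp \QXP^H Y\rp = \sin\lp \Theta\lp \X,\Y\rp \rp$ (by Definition \ref{thm:2.1def} applied to the complement, using $\dim\X = \dim\Y = p$), it suffices to bound the singular values of $E := \QXP^H Y$ by those of $R_Y/\delta$. The key identity comes from multiplying the residual equation $AY = YH_Y + R_Y$ on the left by $\QXP^H$ and inserting $[\QX\ \QXP][\QX\ \QXP]^H = I$: this gives
\begin{equation*}
\QXP^H A [\QX\ \QXP][\QX\ \QXP]^H Y = L_2\, \QXP^H Y = \lp \QXP^H Y\rp H_Y + \QXP^H R_Y,
\end{equation*}
that is, the Sylvester equation $L_2 E - E H_Y = \QXP^H R_Y$.

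Next I would invoke the spectral separation hypothesis. Since $\Lambda\lp H_Y\rp \subset [a,b]$ and $\Lambda\lp L_2\rp \subset \R\setminus[a-\delta,b+\delta]$, the spectra of $L_2$ and $H_Y$ are separated by a gap of at least $\delta$, so the Sylvester operator $E \mapsto L_2 E - E H_Y$ is invertible. The quantitative statement I need is the majorization-form of the Davis–Kahan/Bhatia–Davis–McIntosh bound for Sylvester equations: if $L_2 E - E H_Y = F$ with $\mathrm{dist}\lp \Lambda(L_2),\Lambda(H_Y)\rp \geq \delta$, then $S\lp E\rp \prec_w S\lp F\rp /\delta$. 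This follows by diagonalizing both Hermitian matrices $L_2 = V_2 M_2 V_2^H$ and $H_Y = W D W^H$, writing the equation entrywise as $\lp M_2\rp _{ii}\, \tilde E_{ij} - \tilde E_{ij}\, D_{jj} = \tilde F_{ij}$ for $\tilde E = V_2^H E W$, so that $|\tilde E_{ij}| \leq |\tilde F_{ij}|/\delta$ entrywise; the pinching/Schur-type majorization inequalities for singular values (of the kind collected in the Appendix of this paper, cf.\ Theorem \ref{thm:fan} and Theorem \ref{thm:general}) then upgrade the entrywise bound to $S\lp \tilde E\rp \prec_w S\lp \tilde F\rp /\delta$, which is unitarily the same as $S\lp E\rp \prec_w S\lp F\rp /\delta$.

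Finally I would assemble: $S\lp \QXP^H R_Y\rp \leq S\lp R_Y\rp$ since $\QXP^H$ is a submatrix of a unitary matrix (equivalently $s_{\max}\lp \QXP^H\rp = 1$), so $S\lp \QXP^H R_Y\rp \prec_w S\lp R_Y\rp$, and combining with the Sylvester bound,
\begin{equation*}
\sin\lp \Theta\lp \X,\Y\rp \rp = S\lp \QXP^H Y\rp = S\lp E\rp \prec_w \frac{S\lp \QXP^H R_Y\rp }{\delta} \prec_w \frac{S\lp R_Y\rp }{\delta},
\end{equation*}
using transitivity of $\prec_w$. The main obstacle is the middle step: establishing the majorization version (rather than just the norm version) of the Sylvester bound. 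The entrywise estimate after double diagonalization is immediate, but passing from an entrywise bound on matrix entries to a weak-majorization bound on singular values requires the right pinching inequality, and one must be careful that the diagonalizing unitaries $V_2$ and $W$ act on the two sides independently — so the bound is really on $S$ of the matrix $\tilde F$ whose entries are the entries of $V_2^H F W$, and one needs $S\lp \tilde F\rp = S\lp F\rp$, which holds precisely because $V_2, W$ are unitary.
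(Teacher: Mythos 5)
Your reduction is the standard one and most of it is sound: setting $E=\QXP^H Y$, using $S\lp \QXP^H Y\rp =\sin\lp \Theta\lp \X,\Y\rp \rp $, deriving the Sylvester equation $L_2E-EH_Y=\QXP^H R_Y$ from the block-diagonal structure, and the final step $S\lp \QXP^H R_Y\rp \prec_w S\lp R_Y\rp $ are all correct. (The paper itself does not prove this theorem; it quotes it from Davis--Kahan.) The genuine gap is exactly the step you flag as the "main obstacle": an entrywise bound $|\tilde E_{ij}|\leq |\tilde F_{ij}|/\delta$ does \emph{not} imply $S\lp \tilde E\rp \prec_w S\lp \tilde F\rp /\delta$, and no pinching inequality repairs this. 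For example, with $\tilde F=\bigl[\begin{smallmatrix}1&1\\1&1\end{smallmatrix}\bigr]$ and $\tilde E=\bigl[\begin{smallmatrix}1&-1\\1&1\end{smallmatrix}\bigr]$ one has $|\tilde E_{ij}|=|\tilde F_{ij}|$ entrywise, yet $S\lp \tilde E\rp =[\sqrt{2},\sqrt{2}]$ and $S\lp \tilde F\rp =[2,0]$, so $s_1\lp\tilde E\rp+s_2\lp\tilde E\rp=2\sqrt{2}>2$ and weak majorization fails. Moreover, your intermediate claim that pairwise spectral distance $\geq\delta$ alone yields the constant $1/\delta$ is false: for merely $\delta$-separated spectra the optimal constant in unitarily invariant norms is $\pi/\lp 2\delta\rp $. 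The constant $1/\delta$ is available only because of the interval structure of the hypothesis, and the classical proof exploits it via a Schur-multiplier (integral-representation) bound on the Cauchy kernel $1/\lp \lambda-\mu\rp $, not via entrywise domination, which sees only pairwise distances.

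There is an elementary repair that stays within the paper's Appendix toolbox. Replace $A$ by $A-\mu I$ with $\mu=\lp a+b\rp /2$ and set $c=\lp b-a\rp /2$; this changes neither $R_Y$ nor the Sylvester equation, and now $s_{\max}\lp H_Y\rp \leq c$ while $s_{\min}\lp L_2\rp \geq c+\delta$, so $L_2$ is invertible and $E=L_2^{-1}\lp \QXP^H R_Y+EH_Y\rp $. Theorems \ref{thm:fan} and \ref{thm:general} then give, for every $k$,
\begin{equation*}
\sum_{i=1}^{k}s_i\lp E\rp \leq \frac{1}{c+\delta}\sum_{i=1}^{k}s_i\lp \QXP^H R_Y\rp +\frac{c}{c+\delta}\sum_{i=1}^{k}s_i\lp E\rp ,
\end{equation*}
and rearranging yields $\sum_{i=1}^{k}s_i\lp E\rp \leq \delta^{-1}\sum_{i=1}^{k}s_i\lp \QXP^H R_Y\rp $, i.e., $S\lp E\rp \prec_w S\lp \QXP^H R_Y\rp /\delta$, which completes your argument.
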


The $ \tan\lp\theta\rp$ theorem in \cite{kahan} is valid only for the
Ritz values of $A$ with respect to $Y$ above or below the eigenvalues of $L_2$.
However, in \cite[Theorem 1]{Nakatsukasa12} the conditions of
the $ \tan\lp\theta\rp$ theorem are relaxed, as quoted below.
\begin{thm}\label{thm:residual}
In the notation of Theorem \ref{thm:residualsin}, let
\begin{enumerate}
	\item\cite{kahan} $\Lambda\lp H_Y \rp\subset [a, b]$, while $\Lambda\lp L_2\rp\subset (-\infty,a-\delta]$ or $\Lambda\lp L_2\rp\subset [b+\delta, \infty)$, or 
	\item\cite{Nakatsukasa12} $\Lambda\lp L_2\rp\subset [a, b]$, 
while $\Lambda\lp H_Y\rp$ lies in the union of $(-\infty,a-\delta]$ and $ [b+\delta, \infty)$.
\end{enumerate} 
  Then, we have
 \[
 \tan\lp  \Theta\lp \X,\Y\rp \rp \prec_w \frac{S\lp R_Y\rp}{\delta}.
 \]
\end{thm}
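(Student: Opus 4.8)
The plan is to follow the same template used for Theorem~\ref{thm:residualsin} and \cite{kahan}, reducing the $\tan$ bound to a Sylvester-equation estimate. First I would set up coordinates: write $[\QX\,\QXP]^HA[\QX\,\QXP]=\diag(L_1,L_2)$ and represent the approximating subspace $\Y$ relative to the orthogonal decomposition $\X\oplus\XP$ by a block matrix, so that an orthonormal basis $Y$ has the form $\QX C + \QXP S$ where, after an orthogonal change of basis within $\Y$ and within $\X$, the blocks can be taken as $C=\cos\Theta(\X,\Y)$ and $S$ captures the sines; indeed $S(\QXP^H Y)=\sin\Theta(\X,\Y)$ and $S(\QX^H Y)=\cos\Theta(\X,\Y)$, so $\tan\Theta(\X,\Y)$ is the vector of singular values of the matrix $W$ defined by $\QXP^H Y = W\,\QX^H Y$ (valid since $\Theta(\X,\Y)<\pi/2$ makes $\QX^H Y$ invertible). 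The goal is then to bound $S(W)$ by $S(R_Y)/\delta$.

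Next I would extract a Sylvester equation from the residual. Writing $R_Y = AY - Y H_Y$ and left-multiplying by $\QXP^H$ gives $\QXP^H R_Y = L_2 (\QXP^H Y) - (\QXP^H Y) H_Y$, while $\QX^H R_Y = L_1(\QX^H Y) - (\QX^H Y)H_Y$. Post-multiplying the first identity by $(\QX^H Y)^{-1}$ and substituting $\QXP^H Y = W \QX^H Y$, then using the second identity to replace $H_Y$-terms, yields a relation of the Sylvester form $L_2 W - W L_1 = E$, where $E$ is built from $\QXP^H R_Y$, $W$, and $\QX^H R_Y (\QX^H Y)^{-1}$. Under hypothesis~(1) the spectra of $L_1$ (which contains $\Lambda(H_Y)$-side information through the construction, or more precisely one uses $\Lambda(H_Y)\subset[a,b]$ and $\Lambda(L_2)$ outside the $\delta$-enlargement) and $L_2$ are separated by the gap $\delta$, so the Sylvester operator $W\mapsto L_2W-WL_1$ is invertible with norm of its inverse at most $1/\delta$ in any unitarily invariant norm; the same holds under hypothesis~(2) of \cite{Nakatsukasa12} with the roles of $L_2$ and $H_Y$ interchanged, which is exactly the relaxation that makes the argument symmetric. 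This gives the majorization $S(W)\prec_w S(E)/\delta$, and one checks that the ``cross'' terms in $E$ contribute only higher-order corrections or are absorbed, leaving $S(E)\prec_w S(R_Y)$ up to the desired order — more cleanly, one shows directly $S(W)\prec_w S(\QXP^H R_Y)/\delta \le S(R_Y)/\delta$ by isolating $\QXP^H R_Y$ as the inhomogeneous term and treating the $H_Y$ contribution via the spectral separation.

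The main obstacle I expect is the careful bookkeeping that turns the residual identity into a clean Sylvester equation with the \emph{right} inhomogeneous term, so that no factor of $\kappa(\QX^H Y)$ or extra $\cos\theta_{\max}$ denominators survive — this is precisely where the $\tan$ (as opposed to $\sin$) bound is delicate, because one is dividing by $\QX^H Y$ and must verify the resulting perturbation of $L_1$ does not spoil the gap condition, which is what forces the two alternative hypotheses. Once the Sylvester estimate is in place, I would invoke the standard majorization fact that for a Sylvester operator with spectral gap $\delta$ one has the unitarily-invariant-norm bound $|||W|||\le|||E|||/\delta$, upgraded to the weak-majorization statement $S(W)\prec_w S(E)/\delta$ via the Ky Fan characterization (Theorem~\ref{thm:fan}), and conclude $\tan\Theta(\X,\Y)=S(W)\prec_w S(R_Y)/\delta$. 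Since this theorem is quoted from \cite{kahan,Nakatsukasa12}, I would in practice cite those sources for the details and only sketch this reduction, noting that case~(1) is the classical $\tan\Theta$ theorem and case~(2) is the relaxation of~\cite{Nakatsukasa12}, both covered by the single Sylvester-gap argument above.
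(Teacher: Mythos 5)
This theorem is not proved in the paper at all: it is quoted verbatim from \cite{kahan} (case 1) and \cite{Nakatsukasa12} (case 2), so your closing remark that one would simply cite those sources is exactly what the paper does, and for the purposes of this manuscript that is sufficient.

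Your accompanying sketch, however, contains a genuine gap precisely at the point you yourself flag as "the main obstacle.'' Writing $Y_1=\QX^HY$, $Y_2=\QXP^HY$, $T=Y_2Y_1^{-1}$ (so $S\lp T\rp=\tan\Theta\lp\X,\Y\rp$), the residual identity $\QXP^HR_Y=L_2Y_2-Y_2H_Y$ post-multiplied by $Y_1^{-1}$ gives
\begin{equation*}
L_2T-T\widetilde H=\lp\QXP^HR_Y\rp Y_1^{-1},\qquad \widetilde H=Y_1H_YY_1^{-1}.
\end{equation*}
Two things go wrong with the "standard Sylvester-gap'' conclusion from here: the right-hand side carries the factor $Y_1^{-1}$, whose norm is $1/\cos\lp\theta_{\max}\lp\X,\Y\rp\rp$, so at best one gets a bound with that extra denominator (a weaker statement, comparable to \eqref{eqn:ch6_2}-type bounds, not the clean $S\lp R_Y\rp/\delta$); and $\widetilde H$ is similar to, but not equal to, the Hermitian matrix $H_Y$, so the $1/\delta$ bound on the inverse Sylvester operator --- which in unitarily invariant norms requires normality of both coefficient matrices (or one spectrum in an interval and the other outside its $\delta$-enlargement, \emph{with both matrices normal}) --- does not apply. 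Your claim that "one shows directly $S(W)\prec_w S(\QXP^HR_Y)/\delta$ by isolating $\QXP^HR_Y$ as the inhomogeneous term'' is exactly the theorem to be proved, not a step toward it. The tell is in the hypotheses: if the naive Sylvester argument worked, the generic $\sin\Theta$-type separation (spectrum of $H_Y$ in $[a,b]$, spectrum of $L_2$ anywhere outside $[a-\delta,b+\delta]$) would already yield the tangent bound, and neither the one-sidedness restriction of Davis--Kahan's case (1) nor Nakatsukasa's forty-years-later relaxation in case (2) would have been needed. The actual proofs exploit that one-sidedness through arguments that are genuinely different from the $\sin\Theta$ template, which is why the paper quotes rather than reproves them.
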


Results of \cite[Section 6]{zhu2013} suggest that Theorem \ref{thm:residual}, but not \ref{thm:residualsin}, 
might actually be simply improved by substituting $S\lp P_{\X+\Y}R_Y\rp$ for $S\lp R_Y\rp$, but such an investigation is beyond the goals of this work. 

Theorems \ref{thm:residualsin} and \ref{thm:residual} combined with mixed majorization results in Corollary~\ref{thm:mixsincos1} lead to various
second order a posteriori error bounds, e.g.,
\begin{cor}\label{cor:squareresidul1}
We have
 \begin{eqnarray}\label{eqn:secondordersin}
\hspace{1cm}
 \left|\Lambda\lp X^HAX\rp -\Lambda\lp Y^HAY\rp \right|  \prec_w  
\frac{S\lp P_{\X+\Y}R_Y\rp S\lp R_Y\rp }{\cos\lp \theta_{\max}\lp \X,\Y\rp \rp\delta } \leq 
\frac{S^2\lp R_Y \rp}{\cos\lp \theta_{\max}\lp \X,\Y\rp \rp\delta },
 \end{eqnarray}
under the assumptions of Theorem \ref{thm:residualsin}, and, under the assumptions of Theorem \ref{thm:residual},
 \begin{eqnarray}\label{eqn:secondtanbound}
\left|\Lambda\lp X^HAX\rp -\Lambda\lp Y^HAY\rp\right|^2  \prec_w 
\frac{ S^2\lp P_{\X+\Y} R_Y\rp S^2\lp R_Y\rp }{\delta^2}\leq
\frac{ S^4\lp R_Y\rp}{\delta^2}.
 \end{eqnarray}
\end{cor}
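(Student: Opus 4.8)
The plan is to obtain Corollary \ref{cor:squareresidul1} by directly chaining the mixed bounds from Corollary \ref{thm:mixsincos1} with the angle bounds of Theorems \ref{thm:residualsin} and \ref{thm:residual}, using the fact that the relevant vectors are nonnegative and decreasing so that weak majorization is preserved under entry-wise multiplication. Since both theorems assume that $\X$ is $A$-invariant (it is the exact invariant subspace with $[\QX\,\QXP]^HA[\QX\,\QXP]=\diag(L_1,L_2)$), the residual $R_X$ vanishes and the relevant inequalities from Corollary \ref{thm:mixsincos1} are \eqref{e:c1} and \eqref{e:c2}, both of which only involve $S\lp P_{\X+\Y}R_Y\rp$.

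For the first inequality \eqref{eqn:secondordersin}, I would start from \eqref{e:c1}, namely
\[
\left|\Lambda\lp X^HAX\rp -\Lambda\lp Y^HAY\rp \right| \prec_w
S\lp P_{\X+\Y}R_Y\rp \frac{\sin\lp \Theta\lp \X,\Y\rp \rp }{\cos\lp \theta_{\max}\lp \X,\Y\rp \rp }.
\]
Theorem \ref{thm:residualsin} gives $\sin\lp \Theta\lp \X,\Y\rp \rp \prec_w S\lp R_Y\rp /\delta$. Since $S\lp P_{\X+\Y}R_Y\rp /\cos\lp \theta_{\max}\rp $ is nonnegative and decreasing, and $\sin\lp \Theta\lp \X,\Y\rp \rp $ and $S\lp R_Y\rp /\delta$ are nonnegative and decreasing, the multiplicative property of weak majorization recalled in Section \ref{sec:majorization} (if $u\prec_w v$ then $uw\prec_w vw$ for nonnegative decreasing $w$) lets me multiply through, yielding the first bound in \eqref{eqn:secondordersin}. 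The final inequality $S\lp P_{\X+\Y}R_Y\rp \leq S\lp R_Y\rp $ is exactly \eqref{eqn:RY}, i.e. $s_\max\lp P_{\X+\Y}\rp =1$, which gives the stated upper estimate $S^2\lp R_Y\rp /(\cos\lp \theta_{\max}\rp \delta)$; here I should be slightly careful to note that replacing one factor in the product by a larger one (entry-wise) only enlarges the majorizing side, which is compatible with $\prec_w$.

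For the second inequality \eqref{eqn:secondtanbound}, I would analogously start from \eqref{e:c2},
\[
\left|\Lambda\lp X^HAX\rp -\Lambda\lp Y^HAY\rp \right|^2 \prec_w
S^2\lp P_{\X+\Y} R_Y\rp \tan^2\lp \Theta\lp \X,\Y\rp \rp ,
\]
and apply Theorem \ref{thm:residual}, which gives $\tan\lp \Theta\lp \X,\Y\rp \rp \prec_w S\lp R_Y\rp /\delta$; squaring (an increasing convex function on nonnegatives, by Theorem \ref{thm:convex}) gives $\tan^2\lp \Theta\lp \X,\Y\rp \rp \prec_w S^2\lp R_Y\rp /\delta^2$. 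Multiplying by the nonnegative decreasing vector $S^2\lp P_{\X+\Y}R_Y\rp $ and using \eqref{eqn:RY} once more yields \eqref{eqn:secondtanbound}. The only subtlety — and the one place requiring a moment's thought rather than routine bookkeeping — is confirming that the multiplicand $S^2\lp P_{\X+\Y}R_Y\rp$ is genuinely decreasing (true, since $S(\cdot)$ is decreasing by convention and $t\mapsto t^2$ is increasing on $[0,\infty)$) and that the vectors being multiplied share the same length $p$, so that the entry-wise operations and the majorization are all consistent; with $\dim\X=\dim\Y=p$ and $Y\in\C^{n\times p}$ these dimensions all match. No genuine obstacle arises; the corollary is essentially a packaging of earlier results.
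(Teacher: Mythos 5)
Your proposal is correct and follows exactly the route the paper intends: the corollary is stated there without a written proof, as an immediate combination of \eqref{e:c1} and \eqref{e:c2} from Corollary \ref{thm:mixsincos1} with Theorems \ref{thm:residualsin} and \ref{thm:residual} via the multiplicative property of weak majorization for nonnegative decreasing vectors, plus \eqref{eqn:RY} for the final entry-wise inequality. Your attention to the decreasing-ordering and dimension-matching hypotheses needed for the entry-wise product step is exactly the right bookkeeping.
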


Bound \eqref{eqn:secondordersin} implies \cite[Bound (21) in Theorem 2]{ovtchinnikov62}.

If we take $\Y$ being spanned by the first coordinate vectors $e_1,\ldots, e_p$ and if the eigenvalues of $X^HAX$ correspond to the extreme eigenvalues of $A$, then bound \eqref{eqn:secondtanbound}  
is stronger than \cite[Inequality (13) in Theorem 4]{Mathias98}.
Moreover, the alternative assumption (2) in  Theorem~\ref{thm:residual} is not discussed in \cite{Mathias98}.
 
\subsection{Bounds for matrix eigenvalues after discarding off-diagonal blocks}
Bounding the change of matrix eigenvalues after discarding off-diagonal blocks in the matrix is a traditional, and still active, topic of research, even for the simplest 2-by-2 block Hermitian case, starting with the classical results of Fan and Thompson; see, e.g.,\ \cite{marshall79}. We note a trivial, but important, fact that discarding off-diagonal blocks can be viewed as an application of the Rayleigh-Ritz method. Indeed, let $A$ be a Hermitian 2-by-2 block matrix,
\begin{eqnarray*} 
	A&=&\left[\begin{array}{lr}
         A_{11}  & A_{12}\\
         A_{21} & A_{22}
\end{array}\right],
\end{eqnarray*} 
where $A_{11} \in \C^{k\times k}$and $A_{22} \in \C^{\lp n-k\rp  \times \lp n-k\rp }$. Let the subspace $\X$ be spanned by any $k$ eigenvectors of $A$ and the subspace $\Y$ be spanned by $e_1,\ldots, e_k$ and $\YP$ be spanned by 
$e_{n-k+1},\ldots, e_{n}$, where $e_i$ is the coordinate vector, i.e.
\[ 
	X=\left[\begin{array}{c}
         X_{1}\\
         X_{2} 
\end{array}\right]
\text{ and }
Y=\left[\begin{array}{c}
         I\\
         0 
\end{array}\right], 
\text{ therefore, } 
\tan\lp \Theta\lp \X,\Y\rp \rp =S\lp  X_2 X_1^{-1}\rp 
\]
by \cite[pp.~231--232]{sunste}, assuming for simplicity that the matrix $X_1$ is invertible; for a general case, see \cite[Remark 3.1]{ZhuK13}.
Clearly, $\rho\lp Y\rp =Y^HAY=A_{11}$
and our bounds for $\left|\Lambda\lp  X^HAX\rp -\Lambda\lp  Y^HAY\rp \right|$ turn into bounds for the $k$ eigenvalues $\Lambda\lp  X^HAX\rp $ of $A$ after discarding off-diagonal blocks and looking only at the upper left block $Y^HAY=A_{11}$. 
For example, bound \eqref{eqn:eigen_conjecture_tan} has the right-hand side $S\lp P_{\X+\Y} R_Y\rp \, \tan\lp \Theta\lp \X,\Y\rp \rp $. 
We now use \eqref{eqn:RY} and also take into account that  
\[ 
	R_Y= AY-Y\rho\lp Y\rp =
	\left[\begin{array}{c}
         0\\
         A_{12} 
\end{array}\right],
\]
thus, 
$S\lp  R_Y\rp =S\lp  AY-Y\rho\lp Y\rp \rp =S\lp  A_{12}\rp $.  We conclude that bound \eqref{eqn:eigen_conjecture_tan} in this case implies  
\[\left|\Lambda\lp  X^HAX\rp -\Lambda\lp  Y^HAY\rp \right|\prec_w S\lp  A_{12}\rp S\lp  X_2 X_1^{-1}\rp .\]

Similarly, we can apply our bounds to the lower right block $A_{22}$ by appropriately redefining $Y$, leaving this application as an exercise for the reader. 

Our results appear to be novel in this context as well, relying on both $S\lp  A_{12}\rp $ and $S\lp  X_2 X_1^{-1}\rp $ and allowing one to choose the $k$-dimensional $A$-invariant subspace~$\X$. 
In contrast, traditional bounds compare all, rather than selected, eigenvalues, using $S\lp  A_{12}\rp $ that may be squared, if also the spectral gap between the spectra of $A_{11}$ and $A_{22}$ is involved; see, e.g., \cite{Mathias98} and references there. Some authors bound only the maximal change, without majorization, e.g., \cite{lili05}, using $s^2_\max\lp  A_{12}\rp $. 

We concur with an anonymous referee that an interesting task for future research could be investigating possible connections between and combinations of the traditional and our results, in addition to already 
performed in the previous subsection comparison with \cite[Inequality (13) in Theorem 4]{Mathias98}.

\subsection{Bounds for eigenvalues after matrix additive perturbations}\label{ss:ap}
Discarding off-diagonal blocks in the matrix is one specific instance of a matrix additive perturbation. 
Generally, one is interested in bounding $|\Lambda\lp F\rp -\Lambda\lp G\rp |$ in terms of $S\lp F-G\rp $, where $F$ and $G$ are Hermitian matrices, e.g., as in the classical Weyl's Theorem \ref{thm:bhatia1}. %; see, e.g.,\ one of the most recent papers \cite{wlp15} and references there. 
As we now demonstrate, bounds for the Rayleigh-Ritz method
are surprisingly still applicable, with only one extra assumption that both $F$ and $G$ are shifted and pre-scaled to have their spectra in $[0,1]$. We adopt below a technique from~\cite{ka06}, where a connection is discovered between the Ritz values and PABS. The~technique uses a dilation of an operator on a Hilbert space, defined as an operator on a larger Hilbert space, whose composition with the orthogonal projection onto the original space gives the original operator, after being restricted to the original space.

It is shown in \cite{h50,rn90}, and used in \cite{ka06}, that any $n\times n$ Hermitian matrix $F$ with the spectrum in $[0,1]$ can be extended in a space of the double dimension $2n$ to an orthogonal projector, specifically,
\begin{eqnarray} \label{eqn:CS}
	P\lp F\rp &=&\left[\begin{array}{lr}
         F  & \sqrt{F\lp I-F\rp }\\
         \sqrt{\lp I-F\rp F} & I-F
\end{array}\right]
=
\left[\begin{array}{lr}
         \sqrt{F} \\
         \sqrt{I-F} 
\end{array}\right]
\left[\begin{array}{lr}
         \sqrt{F}  & \sqrt{I-F}
\end{array}\right].
\end{eqnarray} 
It is actually the CS decomposition, see, e.g.,\ \cite{bhatia_book,sunste},
but we do not use this fact. Instead, 
introducing the orthogonal projector
\begin{eqnarray*} 
	P_\Z&=&\left[\begin{array}{lr}
         I  & 0\\
         0 & 0
\end{array}\right],
\end{eqnarray*} 
we observe from \eqref{eqn:CS} that the orthogonal projector $P\lp F\rp $ is the dilation of $F$, since evidently 
$F=\left.\lp  P_\Z P\lp F\rp \rp \right|_\Z$, where the subspace $\Z$ is spanned by the first $n$ coordinate vectors in the space of the double dimension. 

Let the subspace $\F$ denote the range of $P\lp F\rp $ in the space of the double dimension.
By an analog of Definition \ref{thm:2.1def} for restricted products of orthogonal projectors, see, e.g.,\ \cite[Lemma 2.8]{ka06}, we have
$\cos^2\lp  \Theta^{\uparrow}\lp \F,\Z\rp \rp =\Lambda\lp F\rp $.  Alternatively, setting $A=P_\Z$, 
we notice that $F$ can now be interpreted as a result of application of the Rayleigh-Ritz method 
for the Hermitian matrix $A=P_\Z$ on the test/trial subspace $\F$, where the Ritz values are thus given by 
$\Lambda\lp F\rp $!

Let $G$ be another $n\times n$ Hermitian matrix with the spectrum in $[0,1]$. Defining similarly 
the orthogonal projector $P\lp G\rp $ as the dilation of $G$ in the $2$-by-$2$ block form and its range $\G$,
we come to the required conclusion that $\Lambda\lp F\rp -\Lambda\lp G\rp $ is nothing but a change in the Ritz values 
where the test/trial subspace $\F$ turns into $\G$. Substituting $\F$ and $\G$ for $\X$ and $\Y$, 
our bounds for Ritz values are immediately applicable,
and give apparently new bounds for eigenvalues after matrix additive perturbations. 

For example, let us consider \eqref{eqn:conjecture_tan} with $\X=\F$ and $\Y=\G$, where 
for simplicity we drop, using the same arguments as in \eqref{eqn:RY}, the extra projector $P_{\F+\G}$, thus obtaining 
\begin{eqnarray}\label{eqn:conjecture_tan_simp}
\left|\Lambda\lp F\rp -\Lambda\lp G\rp \right|&\prec_{w}& 
 \left\{ S\lp R_\F\rp +S\lp R_\G\rp \right\}\, \tan\lp \Theta\lp \F,\G\rp \rp,
\end{eqnarray}
assuming $\dim\, \F=\dim\, \G$ for \eqref{eqn:conjecture_tan} to hold.
The matrix $A=P_\Z$ is the orthogonal projector. By Lemma \ref{thm:sin2theta}, up to zero values,  
\begin{eqnarray} \nonumber
S\lp R_\F\rp &=&S\lp  \lp I-P\lp F\rp \rp AP\lp F\rp \rp \\ \nonumber
%&=&\frac12\lp  \sin\lp  2\, \Theta\lp \F,\Z\rp \rp \rp ^\downarrow\\ \nonumber
&=&\lp  \sin\lp  \Theta\lp \F,\Z\rp \rp \cos\lp  \Theta\lp \F,\Z\rp \rp \rp ^\downarrow\\
&=& \lp  \sqrt{1-\Lambda\lp F\rp }\sqrt{\Lambda\lp F\rp }\rp ^\downarrow, \label{eqn:sl} 
\end{eqnarray}
and, similarly, $S\lp R_\G\rp =\lp  \sqrt{1-\Lambda\lp G\rp }\sqrt{\Lambda\lp G\rp }\rp ^\downarrow$.
Clearly, $S\lp R_\F\rp $ thus vanishes if $F$ is an orthogonal projector, i.e. with the spectrum 
$\Lambda\lp F\rp $ consisting of zeros and ones. 

Since the subspaces $\F$ and $\G$ have their orthonormal  bases
$[ \sqrt{F} \; \sqrt{I-F} ]^H$ and $[ \sqrt{G} \; \sqrt{I-G} ]^H$, correspondingly,
we obtain by Definition \ref{thm:2.1def} that
\begin{equation}\label{eqn:cos}
 \cos \Theta^\uparrow\lp \F,\G\rp  = S\lp   \sqrt{F}\sqrt{G} + \sqrt{I-F}\sqrt{I-G} \rp .
\end{equation}
We notice that, if the matrices $F$ and $G$ are close to each other, then 
the $\cos \Theta^\uparrow\lp \F,\G\rp $ vector \eqref{eqn:cos} is made of nearly ones, 
so that the $\tan \Theta\lp \F,\G\rp $ multiplier in \eqref{eqn:conjecture_tan_simp} almost vanishes,
i.e. plays the same role as the term $S\lp F-G\rp $ in Weyl's Theorem \ref{thm:bhatia1}.

Thus, we finally have all the values in the right-hand side of bound \eqref{eqn:conjecture_tan_simp} 
expressed only in terms of the original matrices $F$ and $G$, leading to the new bound
of sensitivity of eigenvalues of Hermitian matrices with respect to additive perturbation.
Since the bound is a bit bulky, we let the reader to put all the pieces of the puzzle together.

The most benefiting for our bound case is
where both matrices $F$ and $G$ are approximate orthogonal projectors, so that 
both multipliers in the right-hand side of bound~\eqref{eqn:conjecture_tan_simp} can be small.
Indeed, let $\left\|F-\bar{F}\right\|=\epsilon_F$ and $\left\|G-\bar{G}\right\|=\epsilon_G$, 
where $\bar{F}$ and $\bar{G}$ are orthogonal projectors. 
Then the spectrum $\Lambda(F)$ ($\Lambda(G)$) consists of only zeros and ones, with
$\epsilon_F$ ($\epsilon_G$ correspondingly) accuracy. 
Thus, 
$S\lp R_\F\rp =O\lp\sqrt{\epsilon_F}\rp$ and $S\lp R_\G\rp =O\lp\sqrt{\epsilon_G}\rp$ 
by \eqref{eqn:sl}.
We can also simplify the right-hand side of \eqref{eqn:cos},  
since 
\begin{eqnarray*}
\sqrt{F}\sqrt{G} + \sqrt{I-F}\sqrt{I-G} &\approx& \sqrt{\bar{F}}\sqrt{\bar{G}} + \sqrt{I-\bar{F}}\sqrt{I-\bar{G}}\\
&=& \bar{F}\bar{G} +(I-\bar{F})(I-\bar{G}) \\
&=& (I-\bar{F}) -\bar{G} 
\end{eqnarray*}
with $O\lp\epsilon_F\rp + O\lp\epsilon_G\rp$ accuracy.
By \cite[Theorem 2.17]{kja10} on eigenvalues of a difference of
orthogonal projectors, $\Lambda\lp (I-\bar{F}) -\bar{G} \rp = \pm\sin \Theta \lp \bar{\F}^\perp, \bar{\G} \rp$, 
excluding the values $\pm1$ and zeros. Also by \cite[Theorem 2.17]{kja10}, the multiplicity of zero in 
the spectrum $\Lambda\lp (I-\bar{F}) -\bar{G} \rp$ is 
$\dim\lp\bar{\F}^\perp\cap\bar{\G}\rp+\dim\lp\bar{\F}\cap\bar{\G}^\perp\rp$.
Finally, using \cite[Theorem 2.7]{kja10}, we have that 
$\sin \Theta \lp \bar{\F}^\perp, \bar{\G} \rp = \cos \Theta \lp \bar{\F}, \bar{\G} \rp$,
excluding zeros and ones. 

Collecting everything together, we obtain the following asymptotic bound,
\[
\left|\Lambda\lp F\rp -\Lambda\lp G\rp \right|\prec_{w} 
 \left\{ O\lp\sqrt{\epsilon_F}\rp + O\lp\sqrt{\epsilon_G}\rp \right\}\, \tan\lp \Theta\lp \bar{\F},\bar{\G}\rp \rp ,
\]
assuming that   $\dim\,\bar{\F}=\dim\,\bar{\G}$ and $\tan\lp \Theta\lp \bar{\F},\bar{\G}\rp\rp<\infty$.

In contrast, Weyl's Theorem \ref{thm:bhatia1} in this case gives 
\begin{eqnarray*}
 \left|\Lambda\lp F\rp -\Lambda\lp G\rp \right|&\prec_{w}& S(F-G)\\ 
 &\prec_{w}&
 \epsilon_F + \epsilon_G + S\lp\bar{F}-\bar{G}\rp,
\end{eqnarray*}
where  
$ S\lp\bar{F}-\bar{G}\rp = \left[ \sin\Theta\lp\bar{\F},\bar{\G}\rp ,\, \sin\Theta\lp\bar{\F},\bar{\G}\rp \right]^\downarrow$
by \cite[Theorem 2.17]{kja10}, up to zeros if $\dim\,\bar{\F}=\dim\,\bar{\G}$ and $\tan\lp \Theta\lp \bar{\F},\bar{\G}\rp\rp<\infty$.
We have $\Lambda\lp F\rp -\Lambda\lp G\rp \to 0$, which our bound exactly captures, having the right-hand side vanishing, as $\epsilon_F\to0$ and $\epsilon_G\to0$, while the right-hand side in 
Weyl's Theorem \ref{thm:bhatia1} does not vanish. 

\begin{figure}
\centering
\includegraphics[width=0.5\textwidth]{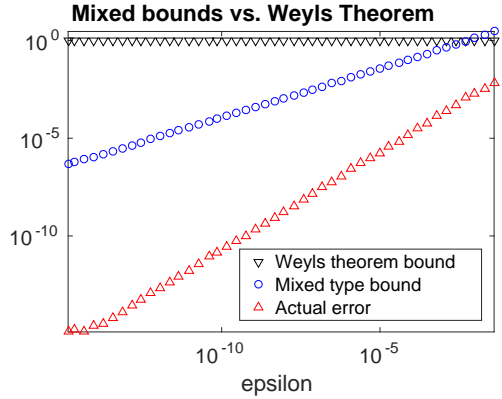}
\caption{Additive perturbed matrix eigenvalues bounded by \eqref{eqn:conjecture_tan_simp} vs.  Weyl's Theorem.}
\label{fig}
\end{figure}
To verify numerically our asymptotic arguments above, we test additive random perturbations of two $2$-by-$2$ orthogonal projectors.
The maximal changes in the eigenvalues and their bounds by the maximal bound  \eqref{eqn:conjecture_tan_simp} and Weyl's Theorem are 
plotted in Figure \ref{fig} as functions of $\epsilon=\epsilon_F=\epsilon_G$ in the log-log scale. We observe in Figure~\ref{fig} the predicted 
dependence of the right-hand side of  \eqref{eqn:conjecture_tan_simp} on the square root~$\sqrt{\epsilon}$. The~bound of Weyl's Theorem 
remains practically constant at the top of Figure \ref{fig}, 
also as predicted, and gets outperformed by \eqref{eqn:conjecture_tan_simp} if $\epsilon<10^{-1}$. 
Finally, both our \eqref{eqn:conjecture_tan_simp} and Weyl's Theorem bound above the actual changes in the eigenvalues, which surprisingly behave 
like ${\epsilon}$ showing that our $\sqrt{\epsilon}$ bound is not sharp in this situation.

\section{Conclusions}
We formulate a conjecture and derive several majorization-type mixed bounds for the
absolute changes of eigenvalues of the matrix RQ for Hermitian matrices in terms of PABS and singular values of residual matrices. Our~results improve and generalize known bounds.
We apply our mixed bounds involving the matrix RQ to classical problems of bounding 
changes in eigenvalues of Hermitian matrices under additive perturbations, and surprisingly obtain new and better results,
e.g.,\ outperforming in some cases the classical Weyl's Theorem. 

\section*{Acknowledgments} 
We are thankful to the members of the Ph.D. committee, Professors 
Julien Langou, Ilse Ipsen, Dianne O'Leary, and Beresford Parlett,
who have read and commented on the preliminary version of some of these 
results as appeared in 2012 in the Ph.D. thesis \cite{zhu2012thesis} of the first author, being advised by the second author. 
Two~anonymous referees have made numerous useful and motivating suggestions to improve the paper. We thank the referees and the Associate Editor Prof. Zlatko~Drmac for fast and careful handling of our manuscript. 

\appendix
\section*{Appendix}
%We briefly review some existing majorization inequalities based on singular values and 
%eigenvalues of matrices. 

The following theorem links unitarily invariant norm inequalities and weak majorization inequalities.
                           
\begin{thm}[{\cite[Corollary 7.4.47]{horn91} and \cite[p.~368]{marshall79}}]
Let $A$ and $B$ be square matrices. Then $S\lp  A\rp \prec_w S\lp  B\rp  $ if and only if $|||A|||\leq |||B|||$ 
for every unitarily invariant norm.
\end{thm}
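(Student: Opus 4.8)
The statement to prove is the theorem in the Appendix: $S(A)\prec_w S(B)$ if and only if $|||A||| \le |||B|||$ for every unitarily invariant norm. The plan is to use the characterization of unitarily invariant norms as symmetric gauge functions of the singular value vector, combined with the fact that Ky Fan $k$-norms dominate in the partial order of unitarily invariant norms.

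\medskip

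\textbf{Forward direction.} Assume $S(A)\prec_w S(B)$. By von Neumann's theorem, every unitarily invariant norm $|||\cdot|||$ has the form $|||A||| = \Phi(S(A))$ for a symmetric gauge function $\Phi$ on $\R^n$, where $\Phi$ is, in particular, monotone and convex. The key classical fact I would invoke is that if $x, y$ are nonnegative decreasing vectors with $x \prec_w y$, then $\Phi(x) \le \Phi(y)$ for every symmetric gauge function $\Phi$; this follows because $x \prec_w y$ means $x$ lies in the convex hull of vectors obtained from $y$ by permutations and coordinate-wise shrinking toward zero, and $\Phi$ is convex, monotone, and permutation-invariant. Since $S(A)$ and $S(B)$ are exactly nonnegative decreasing vectors, this gives $|||A||| = \Phi(S(A)) \le \Phi(S(B)) = |||B|||$.

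\medskip

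\textbf{Reverse direction.} Assume $|||A||| \le |||B|||$ for every unitarily invariant norm. Apply this to the Ky Fan $k$-norms $\|\cdot\|_{(k)}$, defined by $\|A\|_{(k)} = \sum_{i=1}^k s_i(A)$, which are genuine unitarily invariant norms for each $k = 1, \dots, n$. Then $\sum_{i=1}^k s_i(A) = \|A\|_{(k)} \le \|B\|_{(k)} = \sum_{i=1}^k s_i(B)$ for all $k$, which is precisely the definition of $S(A) \prec_w S(B)$.

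\medskip

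\textbf{Main obstacle.} The reverse direction is essentially immediate once one recalls that Ky Fan norms are unitarily invariant. The substantive content is the forward direction, and the crux there is the lemma that symmetric gauge functions are monotone with respect to weak majorization of nonnegative vectors. Since the excerpt positions this as a quoted result from Horn--Johnson and Marshall--Olkin, I would simply cite the standard proof: reduce weak majorization to strong majorization by padding with extra coordinates (or use Hardy--Littlewood--P\'olya to write $x$ as a doubly substochastic image of $y$), then use convexity and symmetry of $\Phi$ together with its monotonicity to handle the substochastic (as opposed to doubly stochastic) part. Given that the theorem is stated with a citation, the expected "proof" in the paper is likely just the citation itself or a one-paragraph sketch along these lines; I would present the Ky Fan argument explicitly for the converse and defer the gauge-function monotonicity to the cited references for the direct implication.
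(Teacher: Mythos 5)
Your proof is correct. The paper offers no proof of this statement at all---it is quoted verbatim with citations to Horn--Johnson and Marshall--Olkin---and your two-part argument (Ky Fan norms for the converse, and the Fan dominance principle via monotonicity of symmetric gauge functions under weak majorization for the forward direction) is exactly the standard proof found in those references.
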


Next, we provide some well known majorization results as they serve as  foundations for our
new majorization inequalities for singular values and eigenvalues in Lemma \ref{lemma:commutor2}.
\begin{thm}[{\cite[Fan Theorem p.~330]{marshall79}}]\label{thm:fan}
Let $A$ and $B$ be square matrices. Then 
\[S\lp A+B\rp \prec_w S\lp A\rp +S\lp B\rp .\]
\end{thm}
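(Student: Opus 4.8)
The plan is to reduce the theorem to Ky Fan's variational characterization of partial sums of singular values: for every $M\in\C^{n\times n}$ and every $1\le k\le n$,
\[
\sum_{i=1}^{k}s_i(M)=\max\bigl\{\,\operatorname{Re}\operatorname{tr}(U^{H}MV)\ :\ U,V\in\C^{n\times k},\ U^{H}U=V^{H}V=I_k\,\bigr\}.
\]
Granting this, the theorem is essentially immediate, since by definition the weak majorization $S(A+B)\prec_w S(A)+S(B)$ is precisely the family of scalar inequalities $\sum_{i=1}^{k}s_i(A+B)\le\sum_{i=1}^{k}\bigl(s_i(A)+s_i(B)\bigr)$, $1\le k\le n$ (the right-hand vector is already in decreasing order, being a sum of two decreasing vectors, so its partial sums are the relevant ones).

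First I would establish the variational principle. For ``$\ge$'', write a singular value decomposition $M=\sum_i s_i(M)\,\hat u_i\hat v_i^{H}$ with $\{\hat u_i\}$, $\{\hat v_i\}$ orthonormal, and take $U=[\hat u_1\cdots\hat u_k]$, $V=[\hat v_1\cdots\hat v_k]$; then $\operatorname{Re}\operatorname{tr}(U^{H}MV)=\sum_{i=1}^{k}s_i(M)$. For the reverse inequality, given any admissible $U,V$, set $W=VU^{H}$, a contraction of rank at most $k$; by the cyclic property $\operatorname{Re}\operatorname{tr}(U^{H}MV)=\operatorname{Re}\operatorname{tr}(MW)\le|\operatorname{tr}(MW)|\le\sum_{i=1}^{k}s_i(M)$, the last step by von Neumann's trace inequality $|\operatorname{tr}(MW)|\le\sum_i s_i(M)s_i(W)$ together with $s_i(W)\le1$ for $i\le k$ and $s_i(W)=0$ otherwise. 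An alternative, fully self-contained route bypasses singular-value calculus: pass to the Hermitian Jordan--Wielandt dilation $\widetilde M=\begin{bmatrix}0&M\\ M^{H}&0\end{bmatrix}$, whose eigenvalues are $\pm s_i(M)$, so that $\sum_{i=1}^{k}s_i(M)=\sum_{i=1}^{k}\lambda_i(\widetilde M)$ for $k\le n$, and then invoke Ky Fan's eigenvalue inequality $\sum_{i=1}^{k}\lambda_i(H+K)\le\sum_{i=1}^{k}\lambda_i(H)+\sum_{i=1}^{k}\lambda_i(K)$, which follows from the Courant--Fischer trace maximization $\sum_{i=1}^{k}\lambda_i(H)=\max\{\operatorname{tr}(PH):P=P^{H}=P^{2},\ \operatorname{rank}P=k\}$.

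With the variational principle available, I would finish as follows: pick $U,V\in\C^{n\times k}$ with orthonormal columns attaining the maximum for $M=A+B$. Then
\[
\sum_{i=1}^{k}s_i(A+B)=\operatorname{Re}\operatorname{tr}\bigl(U^{H}(A+B)V\bigr)=\operatorname{Re}\operatorname{tr}(U^{H}AV)+\operatorname{Re}\operatorname{tr}(U^{H}BV)\le\sum_{i=1}^{k}s_i(A)+\sum_{i=1}^{k}s_i(B),
\]
where the last inequality applies the ``$\le$'' half of the variational principle separately to $A$ and to $B$ with this same (admissible but not necessarily optimal) pair $U,V$. Since $1\le k\le n$ was arbitrary, this is exactly $S(A+B)\prec_w S(A)+S(B)$. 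The one genuine obstacle is the ``$\le$'' direction of Ky Fan's variational principle --- equivalently, the bound $|\operatorname{tr}(MW)|\le\sum_{i=1}^{k}s_i(M)$ for rank-$\le k$ contractions $W$, or, in the Hermitian route, the Courant--Fischer trace maximization; the ``$\ge$'' direction is witnessed by the SVD and subadditivity of the trace does the rest. I would most likely present the Jordan--Wielandt version, since it isolates that single nontrivial ingredient and keeps the argument short.
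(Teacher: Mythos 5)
Your proof is correct. Note that the paper does not prove this statement at all: it is quoted in the Appendix as the classical Fan theorem, with a citation to Marshall--Olkin--Arnold, and is used as a black box. What you have written is the standard textbook derivation --- Ky Fan's variational characterization $\sum_{i=1}^{k}s_i(M)=\max\operatorname{Re}\operatorname{tr}(U^{H}MV)$ over isometries $U,V\in\C^{n\times k}$, with the supremum attained via the SVD and bounded via von Neumann's trace inequality (or, equivalently, the Jordan--Wielandt dilation reducing everything to Ky Fan's eigenvalue inequality for Hermitian matrices) --- and the subadditivity argument that concludes from it is sound; both routes you sketch are complete modulo the one classical ingredient you correctly isolate.
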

\begin{thm}[{\cite{ka10}}]\label{thm:general}
For general matrices $A$ and $B$, we have 
\[S\lp AB\rp  \prec_w S\lp A\rp \,  S\lp B\rp,\] 
$S\lp AB\rp  \leq s_\max\lp A\rp  S\lp B\rp $ and $S\lp AB\rp  \leq S\lp A\rp \, s_\max\lp B\rp.$
If needed, we  add zeros to match the sizes of vectors on either side.
\end{thm}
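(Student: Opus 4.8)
The plan is to handle the three assertions of Theorem~\ref{thm:general} separately. For the main relation $S\lp AB\rp \prec_w S\lp A\rp\,S\lp B\rp$, I would first record Horn's multiplicative majorization of singular values: for every admissible index $k$,
\[
\prod_{i=1}^{k} s_i\lp AB\rp \;\le\;\prod_{i=1}^{k} s_i\lp A\rp \prod_{i=1}^{k} s_i\lp B\rp .
\]
Because $S\lp A\rp$ and $S\lp B\rp$ are nonnegative and decreasing, their entry-wise product $S\lp A\rp S\lp B\rp$ is again decreasing, so the right-hand side above equals $\prod_{i=1}^{k}\lp s_i\lp A\rp s_i\lp B\rp\rp$, and the whole family of inequalities says precisely that $\log S\lp AB\rp$ is weakly majorized by $\log\lp S\lp A\rp S\lp B\rp\rp$ on the strictly positive entries. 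Applying the increasing convex function $t\mapsto e^{t}$ and invoking Theorem~\ref{thm:convex} then converts this into $S\lp AB\rp \prec_w S\lp A\rp S\lp B\rp$.

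To prove Horn's inequality I would use $k$-th compound (exterior-power) matrices: the identity $C_k\lp AB\rp =C_k\lp A\rp C_k\lp B\rp$ (Cauchy--Binet), the fact that the largest singular value of $C_k\lp M\rp$ equals $\prod_{i=1}^{k}s_i\lp M\rp$, and submultiplicativity of the spectral norm give the displayed bound at once. Vanishing singular values I would deal with by setting $r=\mathrm{rank}\lp AB\rp \le\min\{\mathrm{rank}\lp A\rp ,\mathrm{rank}\lp B\rp\}$: the log-majorization argument applies to the positive leading blocks of length $r$, and for $k>r$ the partial-sum inequality $\sum_{i\le k}s_i\lp AB\rp \le\sum_{i\le k}s_i\lp A\rp s_i\lp B\rp$ is automatic since $s_i\lp AB\rp =0$ for $i>r$; alternatively one perturbs $A$ and $B$ to full rank and passes to the limit using continuity of singular values.

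The two spectral-norm bounds $S\lp AB\rp \le s_{\max}\lp A\rp S\lp B\rp$ and $S\lp AB\rp \le S\lp A\rp s_{\max}\lp B\rp$ are entry-wise inequalities between nonnegative decreasing vectors, hence already imply the corresponding weak majorizations, so it suffices to prove them coordinate by coordinate. For the first, I would invoke the Courant--Fischer min-max characterization of singular values: choosing a subspace $\mathcal V$ of the appropriate dimension on which $\max\{\|Bv\|:v\in\mathcal V,\,\|v\|=1\}$ equals $s_i\lp B\rp$, the estimate $\|ABv\|\le\|A\|\,\|Bv\|\le s_{\max}\lp A\rp s_i\lp B\rp$ for every such $v$ gives $s_i\lp AB\rp \le s_{\max}\lp A\rp s_i\lp B\rp$. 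The second bound follows by applying the first to $\lp AB\rp^{H}=B^{H}A^{H}$ and using $s_i\lp M^{H}\rp =s_i\lp M\rp$; zero padding, as in the statement, covers the indices beyond the smallest of the relevant dimensions.

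I expect the only real difficulty to be bookkeeping rather than anything conceptual: keeping the three vectors $S\lp AB\rp$, $S\lp A\rp$, $S\lp B\rp$ consistently zero-padded to a common length, coping with the $-\infty$ entries of $\log$ at the zero singular values, and being careful that the passage through $t\mapsto e^{t}$ uses that this function is \emph{increasing} as well as convex --- which is exactly why the conclusion is weak majorization and cannot in general be upgraded to strong majorization.
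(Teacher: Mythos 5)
The paper does not prove this statement at all: Theorem~\ref{thm:general} is quoted in the Appendix as a known result with a citation to \cite{ka10} (where it is likewise taken from the standard literature, e.g.\ Horn--Johnson or Marshall--Olkin). So there is no in-paper argument to compare against, and your proposal should be judged on its own. It is correct, and it is the classical route: Horn's multiplicative inequality $\prod_{i\le k}s_i\lp AB\rp\le\prod_{i\le k}s_i\lp A\rp s_i\lp B\rp$ via compound matrices and submultiplicativity of the spectral norm, then conversion of weak log-majorization to weak majorization by the increasing convex function $t\mapsto e^t$ --- which is exactly the P\'olya lemma the paper records as Theorem~\ref{thm:convex}, so your argument even reuses the paper's own toolbox. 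Your handling of the two entry-wise bounds by Courant--Fischer (and the transpose trick for the second) is also right, and these indeed need no majorization machinery since entry-wise inequalities between decreasing nonnegative vectors are stronger than $\prec_w$. The only points requiring care are the ones you already flag: the $-\infty$ entries of the logarithm at zero singular values (handled by truncating at the rank or by a perturbation-and-limit argument) and the observation that $S\lp A\rp S\lp B\rp$ is again decreasing so that the partial sums in the weak majorization run over its largest entries. Nothing is missing.
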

\begin{thm}[{\cite[p.~71, Theorem III.4.4]{bhatia_book} and \cite{ka06}}]\label{thm:bhatia1}
%\cite{sunste}
%$$|||\Lambda\lp A\rp -\Lambda\lp B\rp |||\leq|||A-B|||$$ for every unitarily invariant norm.
For Hermitian matrices $A$ and $B,$ we have
\[
|\Lambda\lp A\rp -\Lambda\lp B\rp | \prec_w S\lp A-B\rp .
\]
\end{thm}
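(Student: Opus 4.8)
The plan is to reduce the statement to the Lidskii--Wielandt inequalities, which I treat as the one substantive ingredient, and then pass from eigenvalue differences to their absolute values by an elementary sorting argument. Set $C=A-B$, which is again Hermitian, so that $S(A-B)$ is precisely the vector of numbers $|\lambda_i(C)|$ arranged in decreasing order. Write $\Lambda(A)=(\alpha_1,\dots,\alpha_n)$, $\Lambda(B)=(\beta_1,\dots,\beta_n)$, and $\Lambda(C)=(\gamma_1,\dots,\gamma_n)$, all decreasing. The Lidskii--Wielandt theorem gives, for every $1\le i_1<\dots<i_k\le n$,
\[
\sum_{j=1}^k(\alpha_{i_j}-\beta_{i_j})\ \le\ \sum_{j=1}^k\gamma_j ,
\]
and applying the same bound to the pair $(B,A)$, together with $\lambda_j(B-A)=-\gamma_{n+1-j}$, yields the companion lower estimate $\sum_{j=1}^k(\alpha_{i_j}-\beta_{i_j})\ge\sum_{j=1}^k\gamma_{n+1-j}$.

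Next I would derive the weak majorization directly from these two-sided bounds. Fix $k$ and choose $k$ indices at which $|\alpha_i-\beta_i|$ attains its $k$ largest values; split them into $I_+=\{i:\alpha_i-\beta_i\ge 0\}$ and $I_-$ (the rest), with $|I_+|=p$, $|I_-|=q$, $p+q=k$. Using the upper Lidskii--Wielandt bound on $I_+$ and the lower bound on $I_-$,
\[
\sum_{i\in I_+}(\alpha_i-\beta_i)+\sum_{i\in I_-}(\beta_i-\alpha_i)\ \le\ \sum_{j=1}^p\gamma_j-\sum_{j=1}^q\gamma_{n+1-j}.
\]
The index sets $\{1,\dots,p\}$ and $\{n,n-1,\dots,n-q+1\}$ are disjoint because $p+q\le n$, so the right-hand side is a sum over $k$ distinct indices $m$ of terms each equal to $\pm\gamma_m$ and hence bounded by $|\gamma_m|$; therefore it is at most the sum of the $k$ largest among $|\gamma_1|,\dots,|\gamma_n|$, that is, at most $\sum_{j=1}^k s_j(A-B)$. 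Since $k$ was arbitrary and the left-hand side is the sum of the $k$ largest entries of $|\Lambda(A)-\Lambda(B)|$, this is exactly $|\Lambda(A)-\Lambda(B)|\prec_w S(A-B)$. Equivalently, one may phrase the first step as the strong majorization $(\alpha_i-\beta_i)_i\prec\Lambda(C)$ and invoke the standard fact that a convex function---here $t\mapsto|t|$---carries a strong majorization into a weak majorization of the images, the ordering of entries being irrelevant for $\prec_w$.

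The real work sits in the Lidskii--Wielandt inequalities themselves, and this is where I expect the main obstacle: they are genuinely stronger than Weyl's perturbation bound and do not follow from the subadditivity $S(\cdot+\cdot)\prec_w S(\cdot)+S(\cdot)$ of Theorem~\ref{thm:fan} alone. I would establish them by the Ky Fan maximum-principle argument: express $\sum_{j=1}^k\lambda^{\downarrow}_{i_j}(\cdot)$ as an extremum of the trace against $k$-dimensional orthogonal projectors (or partial isometries) adapted to the chosen index pattern, and observe that a projector nearly optimal for $B$ is an admissible competitor for $A=B+C$, the resulting discrepancy being controlled by $\sum_{j=1}^k\gamma_j$. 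Since this is classical---it is \cite[Theorem III.4.4]{bhatia_book}, see also \cite{marshall79}---it suffices to cite it in the paper, the reduction above making clear how the absolute-value majorization form follows.
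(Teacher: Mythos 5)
Your argument is correct: the paper does not prove this statement but simply cites it (Bhatia, Theorem III.4.4), and your derivation --- Lidskii--Wielandt giving the signed majorization $\Lambda\lp A\rp -\Lambda\lp B\rp \prec \Lambda\lp A-B\rp$, followed by the sign-splitting of the $k$ extremal indices into $I_+$ and $I_-$ and the disjointness of $\{1,\dots,p\}$ and $\{n-q+1,\dots,n\}$ --- is exactly the standard textbook route to the absolute-value/weak-majorization form. You also correctly identify that Fan's subadditivity alone would not suffice and that the genuine content lies in Lidskii's theorem, which, like the paper, you are entitled to take from the literature.
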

\begin{thm}[{\cite[p.~236, Theorem VIII.3.9]{bhatia_book}}]\label{thm:condition}
Let $A$ and $B$ be square matrices such that $A=JD_1J^{-1}$ and $B=TD_2T^{-1},$ 
where $J$ and $T$ are invertible matrices, and $D_1$ and $D_2$ are real diagonal matrices. Then 
%\[
%|||\diag\lp \Lambda\lp A\rp \rp -\diag\lp \Lambda\lp B\rp \rp |||\leq\left[\kappa\lp J\rp \kappa\lp T\rp \right]^{1/2}|||A-B|||,
%\]
%for every unitarily invariant norm, or equivalently,
$
|\Lambda\lp A\rp -\Lambda\lp B\rp |\prec_w \left[\kappa\lp J\rp \kappa\lp T\rp \right]^{1/2}S\lp A-B\rp,
$
where $\diag\lp \Lambda\lp A\rp \rp $denotes a diagonal matrix whose diagonal entries arranged in decreasing order are the eigenvalues of A, and  $\kappa\lp \cdot\rp $ denotes the condition number.
%%%%%%
\end{thm}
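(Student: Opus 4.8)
The plan is to reduce Theorem~\ref{thm:condition} to the Hermitian case, where it is the Lidskii--Wielandt inequality already recorded as Theorem~\ref{thm:bhatia1}. The first step is a \emph{Hermitianization}: since $D_1$ is real, the positive definite matrix $M:=(JJ^{*})^{-1}$ intertwines $A$ and $A^{*}$, i.e.\ $MA=A^{*}M$, so that $H_1:=M^{1/2}AM^{-1/2}=(JJ^{*})^{-1/2}A(JJ^{*})^{1/2}$ is Hermitian with $\Lambda(H_1)=\Lambda(A)$, and the conjugating matrix $\hat P:=(JJ^{*})^{1/2}$ is positive definite with $\|\hat P\|\,\|\hat P^{-1}\|=\kappa(J)$, because the singular values of $(JJ^{*})^{1/2}$ are exactly those of $J$. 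Doing the same for $B$ produces a positive definite $\hat Q$ with $\|\hat Q\|\,\|\hat Q^{-1}\|=\kappa(T)$ and a Hermitian $H_2$ with $\Lambda(H_2)=\Lambda(B)$. By Theorem~\ref{thm:bhatia1},
\[
\bigl|\Lambda(A)-\Lambda(B)\bigr|=\bigl|\Lambda(H_1)-\Lambda(H_2)\bigr|\prec_w S(H_1-H_2),
\]
so the remaining task is to dominate the right-hand side by $[\kappa(J)\kappa(T)]^{1/2}\,S(A-B)$, bearing in mind that $H_1-H_2=\hat P^{-1}A\hat P-\hat Q^{-1}B\hat Q$ is assembled from $A$ and $B$ conjugated by \emph{different} positive definite matrices, so that Fan's subadditivity (Theorem~\ref{thm:fan}) and the multiplicative singular-value inequalities (Theorem~\ref{thm:general}) must be applied to a symmetric combination rather than to a one-sided conjugate of $A-B$.

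The step I expect to be the genuine obstacle is exactly this last estimate. Naive bookkeeping --- writing $H_1-H_2=\hat P^{-1}(A-B)\hat P+(\hat P^{-1}B\hat P-\hat Q^{-1}B\hat Q)$ and conjugating --- costs the full \emph{product} $\kappa(J)\kappa(T)$; indeed $S(H_1-H_2)$ need not even be small when $A=B$, since the two Hermitianizations of a single matrix generally differ. Recovering the \emph{geometric mean} $[\kappa(J)\kappa(T)]^{1/2}$ requires a symmetric argument that never conjugates $A-B$ entirely by $\hat P$ or entirely by $\hat Q$, and that uses crucially that \emph{both} $A$ and $B$ have real spectra, so that Theorem~\ref{thm:bhatia1} can be brought to bear on a suitably balanced pair (for instance by splitting an intervening positive definite factor as $P=P^{1/2}\cdot P^{1/2}$ and distributing the two halves between the two matrices). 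This balancing is the heart of the theorem; for it I would follow the treatment in \cite[Ch.~VIII]{bhatia_book}, from which the statement is quoted.

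Finally, since Theorem~\ref{thm:condition} is phrased with weak majorization of singular values, it is equivalent --- via the characterization recorded as the opening theorem of this Appendix, namely that $S(\cdot)\prec_w S(\cdot)$ is the same as domination in every unitarily invariant norm --- to the norm inequality $|||\,\Lambda^{\downarrow}(A)-\Lambda^{\downarrow}(B)\,|||\le[\kappa(J)\kappa(T)]^{1/2}\,|||A-B|||$, which is the form in which the result is customarily stated; one would carry out the argument above in that norm form and then translate back to the majorization statement.
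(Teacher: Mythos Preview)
The paper does not prove Theorem~\ref{thm:condition}; it is quoted from \cite[p.~236, Theorem~VIII.3.9]{bhatia_book} as a known tool and is invoked, without argument, in the proof of Theorem~\ref{thm:conditiontan2}. There is thus no in-paper proof to compare your proposal against.

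For what it is worth, your outline --- Hermitianize $A$ and $B$ by conjugating with the positive-definite polar factors $\hat P=(JJ^{*})^{1/2}$ and $\hat Q=(TT^{*})^{1/2}$, so that $H_1=\hat P^{-1}A\hat P$ and $H_2=\hat Q^{-1}B\hat Q$ are Hermitian with the same spectra as $A$ and $B$, and then apply Theorem~\ref{thm:bhatia1} --- is exactly Bhatia's strategy, and your diagnosis of the real difficulty (recovering the \emph{geometric mean} $[\kappa(J)\kappa(T)]^{1/2}$ rather than the product $\kappa(J)\kappa(T)$ when passing from $S(H_1-H_2)$ back to $S(A-B)$) is accurate. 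However, you explicitly defer that balancing step to \cite{bhatia_book}, so what you have written is a correct roadmap rather than a self-contained proof: the one step that carries the content of the theorem is precisely the one left unproved. If you wish to complete it, the device in Bhatia is to split the intervening positive-definite factor symmetrically (your parenthetical ``$P=P^{1/2}\cdot P^{1/2}$'' remark points in the right direction) and to exploit that both $H_1$ and $H_2$ are Hermitian so that a second application of the Hermitian perturbation bound, not merely submultiplicativity, is available.
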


\begin{thm}[{\cite{bhatia97}}]\label{thm:half}
Let $A$ and $B$ be square matrices and $AB$ be a normal matrix. Then
%\[
%|||\lp |AB|_{\pol}^{1/2}\rp |||\leq |||\lp |BA|_{\pol}^{1/2}\rp |||,
%\]
%for every unitarily invariant norm, or equivalently,
$
S\lp  |AB|_{\pol}^{1/2}\rp  \prec_w S\lp  |BA|_{\pol}^{1/2}\rp,
$
where $|A|_{\pol}=\lp A^HA\rp ^{1/2}$. 
\end{thm}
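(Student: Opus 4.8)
The plan is to strip the theorem down to a weak majorization between vectors of square roots of singular values, and then to combine the normality of $AB$ with Weyl's classical eigenvalue--singular-value majorant inequality (see, e.g., \cite{bhatia_book,marshall79}) and Theorem \ref{thm:convex}. First I would record the elementary reduction: for any square matrix $M$ the matrix $|M|_{\pol}^{1/2}=(M^HM)^{1/4}$ is positive semidefinite, hence its singular values coincide with its eigenvalues, which are exactly the fourth roots of the eigenvalues of $M^HM$, i.e. the square roots of the singular values of $M$. Writing $S(M)^{1/2}$ for the entrywise square root of the decreasing vector $S(M)$, this says $S(|M|_{\pol}^{1/2})=S(M)^{1/2}$, so the assertion is equivalent to $S(AB)^{1/2}\prec_w S(BA)^{1/2}$.

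Next I would bring in the hypothesis that $AB$ is normal. A normal matrix $N=U\Lambda U^H$ satisfies $(N^HN)^{1/2}=U|\Lambda|U^H$, so its singular values are the moduli of its eigenvalues; thus $S(AB)$ is the decreasingly rearranged vector of the numbers $|\lambda_i(AB)|$. Since $A$ and $B$ are both square (necessarily of the same size, because $AB$ and $BA$ must both be defined), the matrices $AB$ and $BA$ share the same characteristic polynomial, so that rearranged vector of moduli is unchanged when $AB$ is replaced by $BA$. Consequently it suffices to show that for an arbitrary square matrix $M=BA$ the decreasingly sorted vector $|\lambda(M)|^{1/2}$ of square roots of moduli of eigenvalues is weakly majorized by $S(M)^{1/2}$.

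For this last, purely linear-algebraic statement the key ingredient is Weyl's majorant inequality $\prod_{i=1}^k|\lambda_i(M)|^{\downarrow}\le\prod_{i=1}^k s_i(M)$, valid for all $k$. On the range of indices where $|\lambda_i(M)|^{\downarrow}>0$ the corresponding values $s_i(M)$ are automatically positive as well, since their partial product dominates a positive one; there, taking logarithms and multiplying by $1/2$ converts the inequality into $\log(|\lambda(M)|^{1/2})\prec_w\log(S(M)^{1/2})$, and applying Theorem \ref{thm:convex} with the increasing convex function $\exp$ then gives $|\lambda(M)|^{1/2}\prec_w S(M)^{1/2}$. Chaining the identities collected above yields $S(|AB|_{\pol}^{1/2})=S(AB)^{1/2}=|\lambda(AB)|^{1/2}=|\lambda(BA)|^{1/2}\prec_w S(BA)^{1/2}=S(|BA|_{\pol}^{1/2})$. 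The step I expect to require the most care is the bookkeeping around vanishing eigenvalues and singular values, where the logarithm is unavailable: I would run the $\log$/$\exp$ argument only on the positive block of $|\lambda(M)|^{\downarrow}$, obtaining the weak majorization for the truncated vectors, and then observe that appending the trailing zeros on the left can never break a partial-sum inequality whose right-hand side is nonnegative. The only other thing to monitor is that the sorting conventions stay consistent, which is automatic since $x\mapsto\sqrt{x}$, $\log$ and $\exp$ are all increasing on the ranges in play and hence send decreasing vectors to decreasing vectors.
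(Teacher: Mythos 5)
The paper states this result only as a quotation from Bhatia--Kittaneh--Li and gives no proof of its own, so there is nothing internal to compare against; your argument stands or falls on its own, and it stands. The chain $S\lp |AB|_{\pol}^{1/2}\rp = S^{1/2}\lp AB\rp = |\Lambda\lp AB\rp|^{1/2} = |\Lambda\lp BA\rp|^{1/2} \prec_w S^{1/2}\lp BA\rp = S\lp |BA|_{\pol}^{1/2}\rp$ is exactly right: the first equality is the positive-semidefiniteness of $(M^HM)^{1/4}$, the second is normality of $AB$, the third is the coincidence of spectra of $AB$ and $BA$ for square factors, and the weak majorization step is Weyl's majorant inequality upgraded from log-majorization to $\prec_w$ via Theorem \ref{thm:convex} applied to $\exp$. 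Your bookkeeping for the zero eigenvalues is also sound, since for indices past the last positive modulus the partial sums on the left stop growing while those on the right do not decrease. This is essentially the standard proof from the cited reference, so no further comment is needed.
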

\begin{thm}[{\cite[p.~94, Theorem IV.2.5]{bhatia_book}}]\label{thm:bhatia2}
Let $A$ and $B$ be square matrices. Then, for any $t>0$ we have
$S^t\lp AB\rp  \prec_w S^t\lp A\rp \,  S^t\lp B\rp,$
where $S^t\lp A\rp =[s^t_1\lp A\rp ,\ldots,s^t_n\lp A\rp ].$
\end{thm}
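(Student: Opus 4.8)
The plan is to prove first the multiplicative (log-majorization) form of the inequality and only then pass to the additive weak majorization of the $t$-th powers. The point is that, unlike additive weak majorization, the multiplicative statement is stable under raising both sides to an arbitrary positive power, so it is the natural object to work with for all $t>0$ at once.

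First I would establish the Horn inequality
\[
\prod_{i=1}^{k} s_i(AB)\;\le\;\prod_{i=1}^{k} s_i(A)\,s_i(B),\qquad 1\le k\le n,
\]
via $k$-th compound matrices. Writing $C_k(M)$ for the $k$-th compound of an $n\times n$ matrix $M$, one has $\|C_k(M)\|=s_1(M)\cdots s_k(M)$, and $C_k$ is multiplicative, $C_k(AB)=C_k(A)C_k(B)$; submultiplicativity of the operator norm, $\|C_k(A)C_k(B)\|\le\|C_k(A)\|\,\|C_k(B)\|$, then yields exactly the displayed inequality. One could instead try to cite Theorem \ref{thm:general} directly, but that only gives the additive statement $S(AB)\prec_w S(A)\,S(B)$, which together with Theorem \ref{thm:convex} applied to the increasing convex map $x\mapsto x^t$ settles only the range $t\ge 1$; the compound-matrix argument is what is genuinely needed to reach $0<t<1$.

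Next I would raise each side of the Horn inequality to the power $t>0$, obtaining $\prod_{i=1}^{k} s_i^{\,t}(AB)\le\prod_{i=1}^{k} s_i^{\,t}(A)\,s_i^{\,t}(B)$ for every $k$. Since $s_i^{\,t}(AB)$ is decreasing in $i$ and the entrywise product $S^t(A)\,S^t(B)$ of two nonnegative decreasing vectors is again decreasing, this says precisely that $S^t(AB)$ is weakly log-majorized by $S^t(A)\,S^t(B)$. Passing to logarithms converts this into ordinary weak majorization of the corresponding log-vectors, and then applying the increasing convex function $\exp$ together with Theorem \ref{thm:convex} returns $S^t(AB)\prec_w S^t(A)\,S^t(B)$, which is the claim.

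The main nuisance, and the step I would be most careful about, is the presence of zero singular values (when $AB$, $A$, or $B$ is singular), where $\log$ takes the value $-\infty$. This is handled in the standard way: either by a continuity argument, perturbing $A$ and $B$ to invertible matrices and letting the perturbation tend to zero, or by restricting the index range to the rank of $AB$ — for larger $k$ the left-hand partial sums no longer increase, so those inequalities follow from the one at $k$ equal to the rank of $AB$ — after matching the lengths of the vectors with zeros as in Theorem \ref{thm:general} if needed.
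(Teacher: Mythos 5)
The paper does not prove this statement at all: Theorem~\ref{thm:bhatia2} is quoted verbatim from Bhatia's book (Theorem~IV.2.5 there) as one of the known results collected in the Appendix, so there is no in-paper argument to compare against. Your proof is correct and is essentially the standard (Bhatia/Horn) argument: the compound-matrix derivation of $\prod_{i\le k}s_i(AB)\le\prod_{i\le k}s_i(A)s_i(B)$ is right, raising to the power $t$ preserves the product inequalities, and the passage from weak log-majorization to weak majorization via $\exp$ and Theorem~\ref{thm:convex} is the classical Weyl--P\'olya step. Your two side remarks are also apt: citing Theorem~\ref{thm:general} together with Theorem~\ref{thm:convex} really does cover only $t\ge1$ (since $x\mapsto x^t$ is concave for $0<t<1$), and the zero-singular-value issue is genuine but correctly disposed of --- for $k\le\mathrm{rank}(AB)$ Horn's inequality forces $s_i(A)s_i(B)>0$ for $i\le k$ so the logarithms are finite, and for larger $k$ the left-hand partial sums are constant while the right-hand ones are nondecreasing. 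Nothing is missing.
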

\begin{lemma}\label{lemma:half} 
Let $A$ and $B$ be square matrices. Then
\[S\lp  |AB|_{\pol}^{1/2}\rp  \prec_w S^{1/2}\lp A\rp \,  S^{1/2}\lp B\rp.\]
\end{lemma}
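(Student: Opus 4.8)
The plan is to reduce the statement to an already-quoted result, namely Theorem \ref{thm:bhatia2}, after a routine identification of singular values. First I would unwind the definition $|AB|_{\pol}=\lp (AB)^H(AB)\rp ^{1/2}$: this is a positive semidefinite matrix, so its singular values coincide with its eigenvalues, which are exactly the nonnegative square roots of the eigenvalues of $(AB)^H(AB)$, i.e.\ the singular values $s_i\lp AB\rp $. Hence $S\lp |AB|_{\pol}\rp =S\lp AB\rp $.

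Next I would observe that $|AB|_{\pol}^{1/2}$ is the (positive semidefinite) matrix square root of a positive semidefinite matrix, so it too has singular values equal to its eigenvalues, which are the square roots of those of $|AB|_{\pol}$. Combining with the previous step gives the key identity
\[
S\lp |AB|_{\pol}^{1/2}\rp =\left[s_1^{1/2}\lp AB\rp ,\ldots,s_n^{1/2}\lp AB\rp \right]=S^{1/2}\lp AB\rp .
\]
At this point the lemma follows directly: applying Theorem \ref{thm:bhatia2} with $t=1/2$ yields $S^{1/2}\lp AB\rp \prec_w S^{1/2}\lp A\rp \, S^{1/2}\lp B\rp $, and substituting the identity above on the left-hand side finishes the proof.

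There is essentially no hard step here; the only point requiring a modicum of care is the bookkeeping of vector lengths, padding with zeros where needed so that the entry-wise product $S^{1/2}\lp A\rp \, S^{1/2}\lp B\rp $ and all three singular-value vectors have matching sizes, exactly as in the statements of Theorems \ref{thm:general} and \ref{thm:bhatia2}. One could alternatively route through Theorem \ref{thm:half} together with Theorem \ref{thm:bhatia2}, but the direct argument above is shorter and does not need the normality hypothesis of Theorem \ref{thm:half}, so I would present the direct version.
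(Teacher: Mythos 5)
Your proposal is correct and follows essentially the same route as the paper's proof: both establish the identity $S\lp |AB|_{\pol}^{1/2}\rp =S^{1/2}\lp AB\rp$ via positive semidefiniteness of $|AB|_{\pol}$ and its square root, and then invoke Theorem \ref{thm:bhatia2} with $t=1/2$.
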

\begin{proof}
First, we show that $S\lp  |T|_{\pol}^{1/2}\rp =S^{1/2}\lp T\rp $ for all $T\in \C^{n\times n}.$
Since $|T|_{\pol}$ is semi-positive definite and Hermitian, it follows that
$\Lambda^{1/2}\lp  |T|_{\pol}\rp =S^{1/2}\lp  |T|_{\pol}\rp $.  
Also, it follows that $|T|_{\pol}^{1/2}$ is semi-positive definite and Hermitian.
Therefore, 
\[
S\lp  |T|_{\pol}^{1/2}\rp =\Lambda\lp  |T|_{\pol}^{1/2}\rp =\Lambda^{1/2}\lp  |T|_{\pol}\rp .
\]
Moreover, it is easy to check that $S\lp |T|_{\pol}\rp =S\lp T\rp .$
So, we have 
\[
S\lp  |T|_{\pol}^{1/2}\rp =\Lambda^{1/2}\lp  |T|_{\pol}\rp =S^{1/2}\lp T\rp .
\]
Setting $T=AB,$ we obtain 
$S\lp  |AB|_{\pol}^{1/2}\rp =S^{1/2}\lp AB\rp.$
Applying Theorem \ref{thm:bhatia2} for $S^{1/2}\lp AB\rp $ concludes the proof.
% $S\lp  |AB|^{\frac{1}{2}}\rp  \prec_w S^{\frac{1}{2}}\lp A\rp S^{\frac{1}{2}}\lp B\rp ,$
\end{proof}
\begin{thm}[{\cite[p.~254, Proposition IX.1.2]{bhatia_book}}]\label{thm:real}
Let $A$ and $B$ be square matrices and $AB$ be a Hermitian matrix. Then
%\[ |||AB|||\leq |||\re\lp BA\rp |||, \]
%for every unitarily invariant norm, or equivalently,
$S\lp AB\rp  \prec_w S\lp  \re\lp BA\rp \rp,$
where $\re\lp A\rp $ denotes $\lp A+A^H\rp /2.$
\end{thm}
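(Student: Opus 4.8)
The plan is to route through eigenvalues, exploiting that both $AB$ and $\re(BA)$ are Hermitian in this situation. First I would record two structural facts. Since $A$ and $B$ are square of the same size, $AB$ and $BA$ have the same characteristic polynomial, hence the same eigenvalues with multiplicities, so $\Lambda(AB)=\Lambda(BA)$; and since $AB$ is Hermitian by hypothesis, these common eigenvalues are all real, so $BA$ has real spectrum. Consequently $S(AB)=|\Lambda(AB)|$ up to reordering (a Hermitian matrix has singular values equal to the absolute values of its eigenvalues), and likewise $S(\re(BA))=|\Lambda(\re(BA))|$ because $\re(BA)$ is Hermitian by construction. Thus the asserted $S(AB)\prec_w S(\re(BA))$ is equivalent to $|\Lambda(BA)|\prec_w|\Lambda(\re(BA))|$.

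The heart of the argument is the classical fact that the real parts of the eigenvalues of any square matrix are \emph{strongly} majorized by the eigenvalues of its Hermitian part. I would prove this by Schur triangularization: write $BA=UTU^H$ with $U$ unitary and $T$ upper triangular, so that $\re(BA)=U\,\re(T)\,U^H$ and hence $\Lambda(\re(BA))=\Lambda(\re(T))$; the Hermitian matrix $\re(T)$ has diagonal entries $\mathrm{Re}(T_{ii})$, which are exactly the real parts of the eigenvalues of $BA$. Schur's majorization theorem (the diagonal of a Hermitian matrix is majorized by its spectrum) then yields $\mathrm{Re}(\Lambda(BA))\prec\Lambda(\re(BA))$. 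Because the eigenvalues of $BA$ are real here, $\mathrm{Re}(\Lambda(BA))=\Lambda(BA)=\Lambda(AB)$, so in fact $\Lambda(AB)\prec\Lambda(\re(BA))$.

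Finally I would apply the componentwise absolute value. For real vectors $x\prec y$ and any convex $\phi$, one has $\phi(x)\prec_w\phi(y)$: writing $x=Dy$ with $D$ doubly stochastic, Jensen gives $\phi(x)\le D\,\phi(y)$ entrywise, while $D\,\phi(y)\prec\phi(y)$, and entrywise domination implies weak majorization, so transitivity of $\prec_w$ finishes it. Taking $\phi(t)=|t|$ turns $\Lambda(AB)\prec\Lambda(\re(BA))$ into $|\Lambda(AB)|\prec_w|\Lambda(\re(BA))|$, which by the first paragraph is precisely $S(AB)\prec_w S(\re(BA))$. If $A$ and $B$ are not the same size one pads the shorter singular-value vector with zeros, as in Theorem~\ref{thm:general}, without changing either side.

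The main obstacle is subtle rather than computational: one must ensure the Schur step delivers strong majorization and not merely weak, because $|\cdot|$ is convex but not monotone, so P\'olya's Theorem~\ref{thm:convex} does not apply and a weak-majorization input would be insufficient. Strong majorization is available only because $\mathrm{tr}(AB)=\mathrm{tr}(BA)=\mathrm{tr}(\re(BA))$, the last equality using that $\mathrm{tr}(AB)$ is real when $AB$ is Hermitian; dropping the Hermitian hypothesis severs this link, and indeed the statement itself fails without it.
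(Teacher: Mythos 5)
Your argument is correct. The paper does not prove Theorem~\ref{thm:real} at all---it is quoted verbatim from Bhatia's book---so there is no in-paper proof to compare against; your route (Schur triangularization to get $\re(\Lambda(BA))\prec\Lambda(\re(BA))$, the Hermitian hypothesis to identify $\re(\Lambda(BA))$ with $\Lambda(AB)$ and $S(AB)$ with $|\Lambda(AB)|$, then the doubly-stochastic/Jensen step to pass through the convex but non-monotone $|\cdot|$) is essentially the standard textbook derivation, and your flag that strong rather than weak majorization is needed before applying $|\cdot|$ is exactly the right point of care. The only quibble is the closing remark: the strong majorization $\re(\Lambda(M))\prec\Lambda(\re(M))$ holds for every square $M$ regardless of any Hermitian hypothesis (the trace identity $\sum\re\lambda_i(M)=\mathrm{tr}(\re(M))$ is automatic); what the hypothesis actually buys is the reality of $\Lambda(BA)$ and the identification of $S(AB)$ with $|\Lambda(BA)|$---but this does not affect the validity of the proof.
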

\begin{lemma}\label{lemma:realpart}
For a square matrix $A$, we have 
$S\lp  \re\lp A\rp \rp \prec_w S\lp A\rp.$
\end{lemma}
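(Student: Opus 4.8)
The plan is to reduce the claim directly to the Fan Theorem (Theorem \ref{thm:fan}) together with two elementary facts about singular values and weak majorization. First I would write $\re(A) = \tfrac12\lp A + A^H\rp$ and observe that the singular values of a scalar multiple scale accordingly, so $S\lp \re(A)\rp = \tfrac12\, S\lp A + A^H\rp$. Next I would apply Theorem \ref{thm:fan} with the two summands $A$ and $A^H$, obtaining $S\lp A + A^H\rp \prec_w S\lp A\rp + S\lp A^H\rp$. Since singular values are invariant under the conjugate transpose, $S\lp A^H\rp = S\lp A\rp$, hence $S\lp A + A^H\rp \prec_w 2\,S\lp A\rp$.

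Finally I would use the fact that multiplying both sides of a weak majorization relation by a positive scalar preserves it (immediate from the definition via partial sums), so $\tfrac12\, S\lp A + A^H\rp \prec_w S\lp A\rp$, which is exactly $S\lp \re(A)\rp \prec_w S\lp A\rp$. I do not anticipate a genuine obstacle here; the only point requiring a moment of care is making sure the Fan Theorem is being invoked in the additive (not multiplicative) form and that the entrywise sum $S(A)+S(A^H)$ collapses to $2S(A)$ because both vectors are already arranged in decreasing order — but this is routine given the conventions fixed in Section \ref{sec:majorization}.
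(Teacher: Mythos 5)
Your proposal is correct and matches the paper's own argument: the paper likewise applies Theorem \ref{thm:fan} to the decomposition $\re\lp A\rp=\frac{A}{2}+\frac{A^H}{2}$ and uses $S\lp A\rp=S\lp A^H\rp$, the only cosmetic difference being that it keeps the factor $\frac12$ inside the summands rather than scaling the majorization at the end.
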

\begin{proof}
From Theorem \ref{thm:fan} we have
\[
S\lp \re\lp A\rp \rp =S\lp  \frac{A+A^H}{2}\rp \prec_w S\lp  \frac{A}{2}\rp +S\lp  \frac{A^H}{2}\rp .
\]
Since $S\lp A\rp =S\lp A^H\rp $, we have $S\lp  \re\lp A\rp \rp \prec_w S\lp A\rp .$
\end{proof}
\begin{thm}[{\cite{bhatia91}}]\label{thm:s2}
Let $A$ and $B$  be Hermitian  and $T$ be positive definite. Then 
%\[s_{\min}\lp T\rp |||A-B|||\leq|||AT-TB|||,\]
%for every unitarily invariant norm, or equivalently,
\[s_{\min}\lp T\rp S\lp A-B\rp \prec_w S\lp AT-TB\rp.\]
\end{thm}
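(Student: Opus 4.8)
The plan is to reduce the inequality to a bound for the Lyapunov (anticommutator) operator $D\mapsto TD+DT$ and then to exploit an integral representation of its inverse. First I would put $D:=A-B$, a Hermitian matrix, and extract the Hermitian part of $AT-TB$. Since $\lp AT-TB\rp^H=TA-BT$ and $A=B+D$, a one-line computation gives
\[
\re\lp AT-TB\rp=\tfrac12\left[\lp AT-TB\rp+\lp AT-TB\rp^H\right]=\tfrac12\lp TD+DT\rp=:H ,
\]
which is Hermitian. By Lemma~\ref{lemma:realpart} we have $S\lp H\rp\prec_w S\lp AT-TB\rp$, so it suffices to establish the stronger relation $s_{\min}\lp T\rp S\lp D\rp\prec_w S\lp H\rp$.

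By the norm--majorization equivalence (the first theorem of the Appendix), this last relation is the same as proving $s_{\min}\lp T\rp\,|||D|||\le|||H|||$ for every unitarily invariant norm, and this is where the actual work lies. Because $T$ is positive definite, $e^{-tT}\to0$ as $t\to\infty$; differentiating $t\mapsto e^{-tT}De^{-tT}$ and using that $e^{-tT}$ commutes with $T$ yields the derivative $-e^{-tT}\lp TD+DT\rp e^{-tT}=-2\,e^{-tT}He^{-tT}$, so integrating over $[0,\infty)$ produces the representation
\[
D=2\int_0^\infty e^{-tT}He^{-tT}\,dt .
\]
Applying any unitarily invariant norm, using the triangle inequality for the integral together with $|||e^{-tT}He^{-tT}|||\le\|e^{-tT}\|^2\,|||H|||$ (Theorem~\ref{thm:general} applied twice), and recalling that $\|e^{-tT}\|=e^{-t\lmin\lp T\rp}=e^{-t s_{\min}\lp T\rp}$ since $T>0$, we obtain
\[
|||D|||\le 2\,|||H|||\int_0^\infty e^{-2t s_{\min}\lp T\rp}\,dt=\frac{|||H|||}{s_{\min}\lp T\rp} ,
\]
which is precisely $s_{\min}\lp T\rp\,|||D|||\le|||H|||$.

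Chaining the two parts gives $s_{\min}\lp T\rp S\lp A-B\rp=s_{\min}\lp T\rp S\lp D\rp\prec_w S\lp H\rp\prec_w S\lp AT-TB\rp$, the asserted bound. The main obstacle is the middle paragraph: finding the integral formula for the inverse Lyapunov operator and justifying the interchange of the norm with the integral, both of which become routine once $T>0$ guarantees exponential decay of $e^{-tT}$. If one prefers to avoid integrals, the same estimate can be obtained by a Riemann-sum approximation combined with the finite Fan inequality of Theorem~\ref{thm:fan} and Theorem~\ref{thm:general}, but the norm formulation above is the shortest route. The remaining ingredients---the computation of $\re\lp AT-TB\rp$ and the appeal to Lemma~\ref{lemma:realpart}---are immediate.
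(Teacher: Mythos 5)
Your argument is correct, but there is nothing in the paper to compare it against: Theorem \ref{thm:s2} is quoted from Bhatia, Davis, and Kittaneh \cite{bhatia91} without proof and is used as a black-box ingredient in the proofs of Theorem \ref{thm:commutor} and Lemma \ref{lemma:commutor2}. Your derivation is essentially the standard proof of that cited result, and the steps check out. The identity $\re\lp AT-TB\rp=\tfrac12\lp TD+DT\rp$ with $D=A-B$ is a correct computation (it uses that $T$, being positive definite, is Hermitian), and Lemma \ref{lemma:realpart} reduces the problem to bounding $D$ in terms of $H=\tfrac12\lp TD+DT\rp$. The integral representation $D=2\int_0^\infty e^{-tT}He^{-tT}\,dt$ is valid because $T>0$ forces $e^{-tT}De^{-tT}\to 0$, and combining the triangle inequality for the integral with $\|e^{-tT}\|=e^{-t\,s_{\min}\lp T\rp}$ gives $s_{\min}\lp T\rp\,|||D|||\le|||H|||$ for every unitarily invariant norm; by the Ky Fan dominance theorem (the first theorem of the Appendix) and the observation that $s_{\min}\lp T\rp S\lp D\rp=S\lp s_{\min}\lp T\rp D\rp$, this is exactly $s_{\min}\lp T\rp S\lp D\rp\prec_w S\lp H\rp$. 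Chaining with $S\lp H\rp\prec_w S\lp AT-TB\rp$ and transitivity of $\prec_w$ finishes the proof. The only point I would ask you to make explicit is the justification for pulling the unitarily invariant norm inside the integral (the triangle inequality for vector-valued Riemann integrals, routine in finite dimensions), which you already flag as the one nontrivial interchange.
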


The following theorem is inspired by Bhatia, Kittaneh and Li in \cite{bhatia97}. 
We  prove the majorization inequalities 
bounding the singular values of $A-B$ by the singular values of $AT-TB$ and $T^{-1}A-BT^{-1}.$
\begin{thm}\label{thm:commutor}
Let $A$ and $B$  be Hermitian and $T$ be positive definite. Then
\[
S^2\lp A-B\rp  \prec_w S\lp AT-TB\rp \,  S\lp  T^{-1}A-BT^{-1}\rp .
\]
\end{thm}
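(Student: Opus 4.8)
The plan is to reduce Theorem~\ref{thm:commutor} to a setting where the polar-decomposition majorization result of Theorem~\ref{thm:half} (equivalently Lemma~\ref{lemma:half}) applies. The starting point is the algebraic identity
\[
\lp A-B\rp T^{-1}\cdot\lp AT-TB\rp = \lp A-B\rp\lp T^{-1}A-BT^{-1}\rp T,
\]
but more useful is a symmetrized version. Since $A$ and $B$ are Hermitian and $T$ is positive definite, set $M=AT-TB$ and $N=T^{-1}A-BT^{-1}$. A direct computation gives $TN = A - TBT^{-1}$ and $MT^{-1}=AT^{-1}\cdot T\cdots$; the key observation I would pin down first is that the product $NM$ (or a suitable congruence of it) is similar to, or has the same singular values as, $\lp A-B\rp^2$ up to a positive-definite conjugating factor. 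Concretely, $N\,T\,T^{-1}M = NM$ and one checks $NM = \lp T^{-1}A-BT^{-1}\rp\lp AT-TB\rp$, whose $(1,1)$-type expansion telescopes using $A^2$, $B^2$ and the cross terms $T^{-1}ATB$, $BT^{-1}AT$ in a way that should collapse — after using Hermiticity of $A,B$ — to something conjugate to $\lp A-B\rp^2$.

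The main technical device I expect to use is the following chain. Write $C=A-B$ (Hermitian). The aim is to exhibit matrices $P,Q$ with $PQ = $ (something whose singular values are $S^2(C)$, i.e.\ $C^2$ up to unitary equivalence) while $QP$ has singular values dominated, via Theorem~\ref{thm:general} or Theorem~\ref{thm:real}, by $S(M)\,S(N)$. The cleanest route: factor $M = C_1 T$-type and $N = T^{-1}C_2$-type where $C_1,C_2$ are Hermitian "one-sided" versions of $C$, then
\[
S^2\lp C\rp = S\lp C^2\rp \prec_w S\lp \text{(product of }M,N\text{ with }T,T^{-1}\text{ inserted)}\rp \prec_w S\lp M\rp\,S\lp N\rp,
\]
the last step being Theorem~\ref{thm:general}, and the middle step being the identity together with $S(XYZ)=S(\text{congruent thing})$ when the outer factors are $T^{\pm1/2}$ (a similarity, hence same eigenvalues for the Hermitian core). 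Theorem~\ref{thm:s2} and Theorem~\ref{thm:bhatia1} are likely needed to control the congruence cleanly, and Theorem~\ref{thm:real} to pass from a product $AB$ that is Hermitian to the real part of $BA$.

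The hard part will be the bookkeeping that shows the relevant product of $M$, $N$, $T$, and $T^{-1}$ is genuinely \emph{congruent} (not merely similar by a non-Hermitian matrix) to $C^2$, so that its singular values equal $S^2(C)=S(C^2)$ exactly, rather than only being weakly majorized by it — getting the direction of the majorization right is delicate because weak majorization is not symmetric. I would handle this by inserting $T^{1/2}$ and $T^{-1/2}$ symmetrically: $T^{1/2}\lp T^{-1}A-BT^{-1}\rp\lp AT-TB\rp T^{-1/2}$ should be Hermitian (or unitarily equivalent to a Hermitian matrix) and equal to $T^{1/2}NM T^{-1/2}$, whose eigenvalues are those of $NM$; then identify $NM$ (perhaps after using $\re$ via Theorem~\ref{thm:real}, since $A,B$ Hermitian forces certain cross terms to combine) with $C^2$ plus a commutator term that majorization absorbs. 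Once the identity is secured, the two applications of Theorem~\ref{thm:general} finish the argument in two lines.
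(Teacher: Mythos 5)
There is a genuine gap: the algebraic identity your whole plan rests on is false. Writing $M=AT-TB$ and $N=T^{-1}A-BT^{-1}$, note that $N=T^{-1}MT^{-1}$, so $MN=\lp MT^{-1}\rp^2=\lp A-TBT^{-1}\rp^2$ and $NM=T^{-1}\lp A-TBT^{-1}\rp^2T$. No conjugation turns this into $\lp A-B\rp^2$: the eigenvalues of $MN$ are the squares of the eigenvalues of $T^{-1/2}MT^{-1/2}$, i.e.\ of $T^{-1}AT-B$ up to similarity, which are unrelated to those of $A-B$ in general. (Had the similarity held, your route would indeed be short, since then $S^2\lp A-B\rp=\Lambda\lp\lp A-B\rp^2\rp=|\Lambda\lp NM\rp|\prec_w S\lp NM\rp\prec_w S\lp N\rp\,S\lp M\rp$ by Weyl's majorant theorem and Theorem \ref{thm:general}; but it does not hold.) The remainder of the proposal is a list of tools that might be relevant rather than an argument; the decisive steps are flagged but not executed ("should collapse", "the hard part will be the bookkeeping"), and Theorem \ref{thm:half} and Lemma \ref{lemma:half} are not "equivalent" --- they play two distinct roles.

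The paper's proof goes in the opposite direction and never forms the product $MN$. It starts from $S\lp A-B\rp=S\lp |\lp A-B\rp T^{-1/2}\cdot T^{1/2}\lp A-B\rp |_{\pol}^{1/2}\rp$, applies Theorem \ref{thm:half} (the Bhatia--Kittaneh--Li swap for polar square roots) to pass to $S\lp |T^{1/2}\lp A-B\rp^2T^{-1/2}|_{\pol}^{1/2}\rp$, then factors $T^{1/2}\lp A-B\rp^2T^{-1/2}=\lp T^{1/2}\lp A-B\rp T^{1/2}\rp \lp T^{-1/2}\lp A-B\rp T^{-1/2}\rp$, applies Lemma \ref{lemma:half}, and squares to obtain $S^2\lp A-B\rp\prec_w S\lp T^{1/2}\lp A-B\rp T^{1/2}\rp\,S\lp T^{-1/2}\lp A-B\rp T^{-1/2}\rp$. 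The remaining --- and essential --- step, which your sketch does not reach, is converting these symmetric congruences into $S\lp M\rp$ and $S\lp N\rp$: since $T^{1/2}\lp A-B\rp T^{1/2}$ is Hermitian, Theorem \ref{thm:real} gives $S\lp T^{1/2}\lp A-B\rp T^{1/2}\rp\prec_w S\lp \re\lp \lp A-B\rp T\rp\rp$, and $\re\lp \lp A-B\rp T\rp=\re\lp AT-TB\rp$ because $BT-TB$ is skew-Hermitian, whence Lemma \ref{lemma:realpart} yields $\prec_w S\lp AT-TB\rp$; the factor $S\lp T^{-1}A-BT^{-1}\rp$ is obtained the same way. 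If you want to salvage your write-up, this is the chain you need to reconstruct; the product $NM$ is a dead end.
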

\begin{proof}
Since  $A$ and $B$ are Hermitian, we have $\left|\lp  A-B\rp ^2\right|_{\pol}=\lp A-B\rp ^2$. Thus,
\[
S\lp  A-B\rp =S\lp  \left|\lp  A-B\rp ^2\right|_{\pol}^{1/2}\rp =S\lp  \left|\lp  A-B\rp T^{-1/2}T^{1/2}\lp  A-B\rp \right|_{\pol}^{1/2}\rp .
\]
Applying first Theorem \ref{thm:half} and then Lemma \ref{lemma:half}, we obtain
\begin{eqnarray*}
S\lp A-B\rp &\prec_w & S\lp  \left|T^{1/2}\lp  A-B\rp ^2T^{-1/2}\right|_{\pol}^{1/2}\rp \label{eqn:major1}\\
&\prec_w &S^{1/2}\lp  T^{1/2}\lp  A-B\rp T^{1/2}\rp \,  S^{1/2}\lp  T^{-1/2}\lp  A-B\rp T^{-1/2}\rp .
\end{eqnarray*} 
Since the increasing convex functions preserve the weak majorization, e.g.,\ $x^2$ for $x\geq0,$ 
we square both sides to get 
\begin{eqnarray*}
S^2\lp A-B\rp &\prec_w& S\lp  T^{1/2}\lp  A-B\rp T^{1/2}\rp \,  S\lp  T^{-1/2}\lp  A-B\rp T^{-1/2}\rp .
\end{eqnarray*} 
Using Theorem \ref{thm:real},
\begin{eqnarray}\label{eqn:comm1}
S^2\lp A-B\rp &\prec_w& S\lp  \re\left[\lp  A-B\rp T\right]\rp \,  S\lp  \re\left[T^{-1}\lp  A-B\rp \right]\rp .
\end{eqnarray} 
Matrices $BT-TB$ and $BT^{-1}-T^{-1}B$
 are skew-Hermitian, i.e. $C^H=-C$, thus,
 \begin{eqnarray}\label{eqn:comm3}
\re\lp  AT-TB\rp &=&\re\left[\lp  A-B\rp T+\lp  BT-TB\rp \right]
=\re\left[\lp  A-B\rp T\right]
\end{eqnarray} 
and
\begin{eqnarray}\label{eqn:comm2}
\re\lp  T^{-1}A-BT^{-1}\rp &=&\re\left[T^{-1}\lp  A-B\rp -\lp  BT^{-1}-T^{-1}B\rp \right]\nonumber\\
&=&\re\left[T^{-1}\lp  A-B\rp \right].
\end{eqnarray} 
By Lemma \ref{lemma:realpart}, 
\begin{eqnarray}\label{eqn:comm4}
S\left[\re\lp  AT-TB\rp \right]&\prec_w& S\lp  AT-TB\rp ,
\end{eqnarray} 
and
\begin{eqnarray}\label{eqn:comm5}
S\left[\re\lp  T^{-1}A-BT^{-1}\rp \right]&\prec_w& S\lp  T^{-1}A-BT^{-1}\rp .
\end{eqnarray} 
Plugging in \eqref{eqn:comm3}, \eqref{eqn:comm2},  \eqref{eqn:comm4}, and \eqref{eqn:comm5} into \eqref{eqn:comm1} completes the proof.
\end{proof}
\begin{remark}
In \cite{bhatia97}, the authors prove the inequality for unitarily invariant norm such that $|||A-B|||^2\leq |||AT-TB|||\,|||T^{-1}A-BT^{-1}||| $, which is equivalent to, see, e.g.,\ \cite[Theorem 2.1]{bhatia97},
\[
\lp  \sum_{i=1}^ks_i\lp A-B\rp \rp ^2 \leq \lp  \sum_{i=1}^ks_i\lp AT-TB\rp \rp \lp  \sum_{i=1}^ks_i\lp  T^{-1}A-BT^{-1}\rp\rp,  k=1,2\ldots,n. 
\]
In contrast, the weak majorization inequalities in our Theorem \ref{thm:commutor} mean 
\[
\sum_{i=1}^ks_i^2\lp A-B\rp  \leq \sum_{i=1}^k\lp  s_i\lp AT-TB\rp s_i\lp  T^{-1}A-BT^{-1}\rp \rp,  k=1,2\ldots,n.
\]
\end{remark}

\begin{lemma}\label{lemma:commutors}
Under the assumptions of Theorem \ref{thm:commutor}, we have
\[S^2\lp  A-B\rp \prec_w S^2\lp  T^{-1}\rp \,  S^2\lp  AT-TB\rp.\]
\end{lemma}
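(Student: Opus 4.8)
The plan is to reduce Lemma \ref{lemma:commutors} to Theorem \ref{thm:commutor}, controlling the awkward factor $S\lp T^{-1}A-BT^{-1}\rp$ that appears there in terms of $S\lp T^{-1}\rp$ and $S\lp AT-TB\rp$.

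The starting point is the algebraic identity $T^{-1}A-BT^{-1}=T^{-1}\lp AT-TB\rp T^{-1}$, which is immediate on expanding the right-hand side and cancelling $TT^{-1}=I$ and $T^{-1}T=I$; it uses only invertibility of $T$, already guaranteed by positive definiteness. Applying Theorem \ref{thm:general} twice to this triple product---first peeling off the trailing factor $T^{-1}$, then splitting the remaining product $T^{-1}\lp AT-TB\rp$---and combining the two resulting weak majorizations via the elementary rule from Section \ref{sec:majorization} that entrywise multiplication by a nonnegative decreasing vector preserves $\prec_w$, I obtain $S\lp T^{-1}A-BT^{-1}\rp \prec_w S^2\lp T^{-1}\rp\, S\lp AT-TB\rp$. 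Since every matrix in sight is $n\times n$, no zero-padding of the singular-value vectors is needed, and the vector $S^2\lp T^{-1}\rp\, S\lp AT-TB\rp$, being a product of nonnegative decreasing vectors, is itself nonnegative and decreasing.

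It then remains to invoke Theorem \ref{thm:commutor}, which under the present hypotheses gives $S^2\lp A-B\rp \prec_w S\lp AT-TB\rp\, S\lp T^{-1}A-BT^{-1}\rp$. Multiplying the weak majorization of the previous paragraph entrywise by the nonnegative decreasing vector $S\lp AT-TB\rp$ and using transitivity of $\prec_w$ yields $S^2\lp A-B\rp \prec_w S\lp AT-TB\rp\, S^2\lp T^{-1}\rp\, S\lp AT-TB\rp = S^2\lp T^{-1}\rp\, S^2\lp AT-TB\rp$, which is the assertion. I do not anticipate a genuine obstacle here: the proof is a short assembly of already-established facts. The main point requiring care, rather than a real difficulty, is the weak-majorization bookkeeping---verifying at each step that the vectors being multiplied in or compared against are nonnegative and arranged in decreasing order, so that the multiplicativity rule of Section \ref{sec:majorization} legitimately applies---and getting the order of the factors right in the two applications of Theorem \ref{thm:general}.
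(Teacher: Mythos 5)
Your proposal is correct and is essentially the paper's own argument: the same identity $T^{-1}A-BT^{-1}=T^{-1}\lp AT-TB\rp T^{-1}$, the same use of the submultiplicativity of singular values to get $S\lp T^{-1}A-BT^{-1}\rp \prec_w S^2\lp T^{-1}\rp\, S\lp AT-TB\rp$, and the same final appeal to Theorem \ref{thm:commutor}. Your extra care with the weak-majorization multiplication rules is exactly the bookkeeping the paper leaves implicit.
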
 
\begin{proof}
Since $T^{-1}A-BT^{-1}=T^{-1}\lp  AT-TB\rp T^{-1}$, we have 
$$S\lp  T^{-1}A-BT^{-1}\rp \prec_w S\lp  T^{-1}\rp \,  S\lp  AT-TB\rp  \,  S\lp  T^{-1}\rp .$$
By Theorem \ref{thm:commutor}, this lemma is proved.
\end{proof}

\begin{lemma}\label{lemma:commutor2}
Let $A$ and $B$ be Hermitian and $T$ be invertible. We have
 \begin{enumerate}
\item $s_{\min}\lp T\rp \left|\Lambda\lp A\rp -\Lambda\lp B\rp \right|\prec_w S\lp  AT-TB\rp .$
\item	
$\left|\Lambda\lp A\rp -\Lambda\lp B\rp \right|^2\prec_w S\lp  AT-TB\rp \,  S\lp  T^{-1}A-BT^{-1}\rp .$
\item 
$\left|\Lambda\lp A\rp -\Lambda\lp B\rp \right|^2\prec_w S^2\lp  T^{-1}\rp \,  S^2\lp  AT-TB\rp .$
\end{enumerate}
\end{lemma}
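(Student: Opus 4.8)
The plan is to reduce the stated inequalities, in which $T$ is only assumed invertible, to their already-established positive-definite counterparts---Theorem~\ref{thm:s2} for item~1, Theorem~\ref{thm:commutor} for item~2, and Lemma~\ref{lemma:commutors} for item~3---and then to pass from bounds on singular values of a difference of Hermitian matrices to bounds on $\left|\Lambda(A)-\Lambda(B)\right|$ by Weyl's Theorem~\ref{thm:bhatia1}.

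First I would take the polar decomposition $T=PU$, where $U$ is unitary and $P=\left(TT^H\right)^{1/2}$ is positive definite because $T$ is invertible, and set $\widetilde B=UBU^H$. Then $\widetilde B$ is Hermitian with $\Lambda(\widetilde B)=\Lambda(B)$, and a short computation gives the factorizations
\[
AT-TB=\left(AP-P\widetilde B\right)U,\qquad T^{-1}A-BT^{-1}=U^H\left(P^{-1}A-\widetilde BP^{-1}\right).
\]
Since singular values are unchanged under multiplication by unitary matrices, this yields
\[
S(AT-TB)=S\left(AP-P\widetilde B\right),\qquad S\left(T^{-1}A-BT^{-1}\right)=S\left(P^{-1}A-\widetilde BP^{-1}\right),
\]
together with $S(T^{-1})=S(P^{-1})$ and $s_{\min}(T)=s_{\min}(P)$.

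For item~1 I would apply Theorem~\ref{thm:s2} to the Hermitian matrices $A,\widetilde B$ and the positive definite matrix $P$, obtaining $s_{\min}(P)\,S(A-\widetilde B)\prec_w S(AP-P\widetilde B)$; Weyl's Theorem~\ref{thm:bhatia1} gives $\left|\Lambda(A)-\Lambda(B)\right|=\left|\Lambda(A)-\Lambda(\widetilde B)\right|\prec_w S(A-\widetilde B)$, and multiplying this relation by the nonnegative scalar $s_{\min}(T)=s_{\min}(P)$ and invoking transitivity of $\prec_w$ finishes item~1. For items~2 and~3 I would instead apply Theorem~\ref{thm:commutor}, respectively Lemma~\ref{lemma:commutors}, to $A,\widetilde B,P$, which bounds $S^2(A-\widetilde B)$ by the desired right-hand side; then I would square the Weyl majorization $\left|\Lambda(A)-\Lambda(B)\right|\prec_w S(A-\widetilde B)$ using that the increasing convex map $x\mapsto x^2$ on $[0,\infty)$ preserves weak majorization (Theorem~\ref{thm:convex}), getting $\left|\Lambda(A)-\Lambda(B)\right|^2\prec_w S^2(A-\widetilde B)$, and conclude by transitivity.

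I do not anticipate a genuine difficulty: the argument is essentially bookkeeping that assembles the appendix results. The two points that require care are choosing the \emph{right} side for the polar factor, namely $T=PU$ rather than $UP$, so that the single unitary conjugation $B\mapsto UBU^H$ simultaneously simplifies $AT-TB$, $T^{-1}A-BT^{-1}$ and $T^{-1}$; and recognizing that the transition from $S(A-\widetilde B)$ to $\left|\Lambda(A)-\Lambda(B)\right|$ is exactly where Weyl's Theorem~\ref{thm:bhatia1} enters, with Theorem~\ref{thm:convex} needed additionally for the squared forms in items~2 and~3.
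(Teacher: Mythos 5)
Your proof is correct and follows essentially the same route as the paper: reduce the invertible $T$ to a positive definite factor by splitting off a unitary, apply Theorem~\ref{thm:s2}, Theorem~\ref{thm:commutor}, or Lemma~\ref{lemma:commutors} to the conjugated Hermitian matrices, and pass to eigenvalues via Weyl's Theorem~\ref{thm:bhatia1} (with Theorem~\ref{thm:convex} for the squared forms). The only cosmetic difference is that the paper uses the SVD $T=U\Sigma V^H$ and conjugates $A$ by $U$ and $B$ by $V$, whereas you use the polar decomposition $T=PU$ and conjugate only~$B$.
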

\begin{proof}
 Let the SVD of $T$ be $U\Sigma V^H$,
 where $U$ and $V$ are $n$ by $n$ unitary matrices and $\Sigma$ is diagonal. 
Since $T$ is invertible, it follows that $\Sigma$ is positive definite. 
\[
AT-TB=AU\Sigma V^H-U\Sigma V^HB
    =U\lp  U^HAU\Sigma-\Sigma V^HBV\rp V^H.
\]
Since the singular values are invariant under unitary transforms, we have
\begin{eqnarray}\label{eqn:com3}
S\lp  AT-TB\rp =S\lp  U^HAU\Sigma-\Sigma V^HBV\rp .
\end{eqnarray}
Taking $A:=U^HAU$, $B:=V^HBV$, and $T:=\Sigma$ in Theorem \ref{thm:s2}, we obtain
\begin{eqnarray}\label{eqn:com1}
s_{\min}\lp \Sigma\rp S\lp U^HAU-V^HBV\rp \prec_w S\lp  U^HAU\Sigma-\Sigma V^HBV\rp .
\end{eqnarray}
Applying Theorem \ref{thm:bhatia1}, we have 
\begin{eqnarray}\label{eqn:com2}
|\Lambda\lp A\rp -\Lambda\lp B\rp |=|\Lambda\lp U^HAU\rp -\Lambda\lp V^HBV\rp |\prec_w S\lp  U^HAU-V^HBV\rp .
\end{eqnarray}
Combining \eqref{eqn:com3}, \eqref{eqn:com1}, and \eqref{eqn:com2}, we obtain the first statement. 

Next, we prove the second statement. We have
\[
T^{-1}A-BT^{-1}=V\Sigma^{-1} U^HA-BV\Sigma^{-1} U^H
    =V\lp  \Sigma^{-1} U^HAU-V^HBV\Sigma^{-1}\rp U^H.
\]
Therefore, we have $S\lp  T^{-1}A-BT^{-1}\rp =S\lp  \Sigma^{-1} U^HAU-V^HBV\Sigma^{-1}\rp .$

By Theorem \ref{thm:commutor}, we have
\[
S^2\lp  U^HAU-V^HBV\rp \prec_w S\lp  U^HAU\Sigma-\Sigma V^HBV\rp \,  S\lp  \Sigma^{-1} U^HAU-V^HBV\Sigma^{-1}\rp .
\]
By using \eqref{eqn:com2}, the second statement is proved. 
Similarly,  applying Lemma~\ref{lemma:commutors}
we obtain the third statement. 
\end{proof}

\begin{lemma}\label{thm:sin2theta}
Let $P$ and $Q$ be orthogonal projectors onto subspaces $\PP$ and $\QQ$. Then, 
up to zero values,  
$S\lp\lp I-P\rp QP\rp =\lp  \sin\lp  \Theta\lp \PP,\QQ\rp \rp \cos\lp  \Theta\lp \PP,\QQ\rp \rp \rp ^\downarrow.$
\end{lemma}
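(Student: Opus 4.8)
The plan is to reduce the computation of $S\lp \lp I-P\rp QP\rp $ to an eigenvalue calculation for a Hermitian matrix, and then read off the answer by applying a polynomial to that matrix. Set $M:=\lp I-P\rp QP$. Since $P$ and $Q$ are orthogonal projectors, $P=P^H=P^2$, $Q=Q^H=Q^2$, and $I-P$ is likewise an orthogonal projector, so
\[
M^HM=PQ\lp I-P\rp ^H\lp I-P\rp QP=PQ\lp I-P\rp QP=PQP-PQPQP=N-N^2,
\]
where $N:=PQP$ is Hermitian and positive semidefinite. The squared singular values of $M$ are precisely the eigenvalues of $M^HM$, so it suffices to determine $\Lambda\lp N-N^2\rp $.

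Next I would pin down the spectrum of $N=PQP$. Let the columns of $X$ form an orthonormal basis of $\PP$, so $P=XX^H$, and let the columns of $Y$ form an orthonormal basis of $\QQ$, so $Q=YY^H$. Then $N=X\lp X^HY\rp \lp X^HY\rp ^HX^H$, so the nonzero eigenvalues of $N$ coincide with the nonzero eigenvalues of $\lp X^HY\rp \lp X^HY\rp ^H$, i.e.\ with the nonzero values among $s_i^2\lp X^HY\rp $. By Definition \ref{thm:2.1def} we have $S\lp X^HY\rp =\cos\lp \Theta^{\uparrow}\lp \PP,\QQ\rp \rp $, hence, up to zero values, the eigenvalues of $N$ are the numbers $\cos^2\lp \theta_i\rp $ as $\theta_i$ ranges over the principal angles $\Theta\lp \PP,\QQ\rp $.

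Applying the polynomial $t\mapsto t-t^2$ to the Hermitian matrix $N$, the eigenvalues of $M^HM=N-N^2$ are, up to zero values, the numbers $\cos^2\lp \theta_i\rp -\cos^4\lp \theta_i\rp =\cos^2\lp \theta_i\rp \sin^2\lp \theta_i\rp $; note that the eigenvalue $t=1$ of $N$, if present, also maps to $0$, consistently with $\sin\lp \theta_i\rp =0$ when $\theta_i=0$. Taking square roots and using that every principal angle lies in $[0,\pi/2]$, so $\cos\lp \theta_i\rp \sin\lp \theta_i\rp \geq0$, the singular values of $M$ are, up to zero values, the numbers $\sin\lp \theta_i\rp \cos\lp \theta_i\rp $; arranging them decreasingly yields $S\lp \lp I-P\rp QP\rp =\lp \sin\lp \Theta\lp \PP,\QQ\rp \rp \cos\lp \Theta\lp \PP,\QQ\rp \rp \rp ^\downarrow$.

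The only real subtlety, rather than a genuine obstacle, is bookkeeping: the vector $\sin\lp \Theta\rp \cos\lp \Theta\rp $ built from $\Theta$ in decreasing order need not itself be decreasing, since $t\mapsto\sin t\cos t$ is not monotone on $[0,\pi/2]$, which is exactly why the statement re-sorts with the explicit ``$\downarrow$''; and one must check that the padding zeros on both sides match in number, which is what the phrase ``up to zero values'' absorbs. An alternative route, bypassing the polynomial step, is to use the CS decomposition of $[X\;X_\perp]^H[Y\;Y_\perp]$, which diagonalizes $X^HY$ and $X_\perp^HY$ with a common right factor, so that $S\lp \lp I-P\rp QP\rp =S\lp X_\perp^HYY^HX\rp $ can be read off directly as $\lp \sin\Theta\cos\Theta\rp ^\downarrow$.
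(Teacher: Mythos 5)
Your proof is correct, and it takes a genuinely different route from the paper's. The paper reduces $S\lp\lp I-P\rp QP\rp$ to $S\lp \QXP^{H}\QY\QY^{H}\QX\rp$ and then invokes the CS decomposition to diagonalize $\QXP^{H}\QY$ and $\QY^{H}\QX$ simultaneously (with a common right unitary factor $V_1$), from which the product $\diag\lp 0,\sin\Theta\cos\Theta,0\rp$ is read off directly — essentially the alternative you sketch in your last sentence. Your main argument instead computes $M^HM=PQP-\lp PQP\rp^2$ and applies the spectral mapping $t\mapsto t-t^2$ to the Hermitian matrix $N=PQP$, whose nonzero eigenvalues are identified with $\cos^2\theta_i$ via the cyclic-permutation invariance of the nonzero spectrum and Definition \ref{thm:2.1def}. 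This is more elementary: it needs only the spectral theorem and the identity $M^HM=N-N^2$ (which you verify correctly using $\lp I-P\rp^2=I-P$ and $Q^2=Q$), avoiding the heavier simultaneous-diagonalization machinery of the CS decomposition. What the CS route buys in exchange is that it exhibits the singular vectors and the block structure explicitly, which is why the paper, having already set up that notation via \cite[Formula (1)]{ZhuK13}, uses it. Your bookkeeping remarks — that $\sin t\cos t$ is not monotone on $[0,\pi/2]$, hence the explicit re-sorting, and that the zero paddings on both sides are absorbed by ``up to zero values'' (including the eigenvalue $1$ of $N$ mapping to $0$) — correctly address the only delicate points.
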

\begin{proof}
Let $\left[\QX, \QXP\right] $ be a unitary matrix and columns of the matrices $\QX $ and 
$\QY $   
form  orthonormal bases for the subspaces $\PP$ and $\QQ$, 
correspondingly. 
Then, up to zeros,
$
S\lp \lp I-P\rp QP\rp=S\lp\QXP\QXP^{H}\QY\QY^{H}\QX\QX^H\rp=S\lp\QXP^{H}\QY\QY^{H}\QX\rp.
$
By CS-decomposition, e.g., in notation of \cite[Formula (1)]{ZhuK13}, there exist unitary matrices 
$U_1, U_2$ and $V_1$, such that
$\QXP^{H}\QY=U_2^{H}\diag\lp 0,\sin\lp  \Theta\rp, 1 \rp V_1$  
and $\QY^{H}\QX=V_1^H\diag\lp 1,\cos\lp \Theta\rp , 0\rp U_1.$
Thus, 
$\QXP^{H}\QY\QY^{H}\QX=U_2^{H}\diag\lp 0,\sin\lp \Theta\rp\cos\lp \Theta\rp , 0\rp U_1,$
where $0$ and $1$ denote vectors of zeros and ones, correspondingly, of matching sizes, and 
$\Theta$ is the vector of the angles  in $\lp 0,\pi/2\rp$ between the subspaces $\PP$ and $\QQ$. 
Dropping again, if needed, the zero entries, gives the statement of the lemma.
\end{proof}

\vskip12pt
\def\refname{\centerline{\footnotesize\rm REFERENCES}}


\begin{thebibliography}{99}
%\providecommand{\natexlab}[1]{#1}
%\providecommand{\url}[1]{\texttt{#1}}
%\expandafter\ifx\csname urlstyle\endcsname\relax
%  \providecommand{\doi}[1]{doi: #1}\else
%  \providecommand{\doi}{doi: \begingroup \urlstyle{rm}\Url}\fi



\bibitem{akpp08}
{\sc  Argentati, M.~E., Knyazev, A.~V., Paige, C.~C., and Panayotov, I.},
{\em Bounds on Changes in Ritz Values for a Perturbed Invariant Subspace of a Hermitian Matrix},
{SIAM. J. Matrix Anal. Appl.,}, 30 (2008), 2, pp.~548--559.
\doi{10.1137/070684628}

\bibitem{bhatia_book}
{\sc Bhatia, R.},
{\em Matrix analysis},
{169},
{Springer-Verlag},
{New York}, 1997.
 ISBN {0-387-94846-5}.
\doi{10.1007/978-1-4612-0653-8}.

\bibitem {bhatia91}
{\sc Bhatia, R. and Davis, C. and Kittaneh, F.},
 {\em Some inequalities for commutators and an application to
              spectral variation},
{Aequationes Math.},
 41(1991), pp.~70--78.
\doi{10.1007/BF02227441}.
    
\bibitem{bhatia97}
{\sc Bhatia, R. and Kittaneh, F. and Li, R.},
{\em Some inequalities for commutators and an application to
              spectral variation. {II}},
{Linear and Multilinear Algebra}, 43(1997), pp.~207--219.
\doi{10.1080/03081089708818526}.
 
  

\bibitem{bjorck73}
 {\sc Bj{\"o}rck, {\.A}. and Golub, G.},
 {\em Numerical methods for computing angles between linear subspaces},
   {Math. Comp.}, 27(1973), pp.~579--594.
  %FJOURNAL = {Mathematics of Computation},
\doi{10.1090/S0025-5718-1973-0348991-3}.

\bibitem{bosner09}
 {\sc Bosner, N. and Drma{\v{c}}, Z},
{\em Subspace gap residuals for {R}ayleigh-{R}itz approximations},
{SIAM J. Matrix Anal. Appl.},
31(2009), 1, pp.~54--67.
\doi{10.1137/070689425}.

\bibitem{bpk96}
{\sc Bramble, James H., Pasciak, Joseph E., and Knyazev, Andrew V.},
{\em A subspace preconditioning algorithm for eigenvector/eigenvalue computation},
{Advances in Computational Mathematics}, 6(1996), 2, pp.~159--189.
\doi{10.1007/BF02127702}  
  
%\bibitem{cho97}
%{\sc Cho, G.~E. and Ipsen, I.~C.~F.},
%{\em If a matrix has only a single eigenvalue, how sensitive is the eigenvalue?},
%Center for Research in Scientific Computation, Department of Mathematics, University of North Carolina, 1997.
%\url{www4.ncsu.edu/~ipsen/ps/tr97-20.ps}. %no longer there
 
 % \bibitem{Cock02}
%{\sc De Cock, K. and De Moor, B.},
%{\em Canonical Correlations Between Input and Output Processes of linear stochastic models},
%{In Proceedings of
%the Fifteenth International Symposium on the Mathematical Theory of
%Networks and Systems (MTNS 2002)},
%Aug
%12-16, 2002.

\bibitem{kahan}
{\sc Davis, C. and Kahan, W.~M.},
{\em The rotation of eigenvectors by a perturbation. {III}},
{SIAM J. Numer. Anal.)},
7(1970), pp.~1--46.
\doi{10.1137/0707001}.
  
\bibitem{george00}
{\sc George, A. and Ikramov, Kh. and Kucherov, A.~B.},
{\em Some properties of symmetric quasi-definite matrices},
{SIAM J. Matrix Anal. Appl.}, 21(2000), 4, pp.~1318--1323 (electronic). 
\doi{10.1137/S0895479897329400}.

\bibitem{h50}
{\sc Halmos, P.~R.},
{\em Normal dilations and extensions of operators}, 
Summa Brasil. Math. 2 (1950),
pp.~125--134.

\bibitem{horn91}
{\sc Horn, R.~A. and Johnson, C.~R.},
{\em Matrix analysis. Second Edition},
{Cambridge University Press},
 Cambridge, 2012.
\doi{10.1017/CBO9781139020411}

\bibitem{k85}
{\sc Knyazev, A.~V.},
{\em Sharp a priori error estimates of the {Rayleigh-Ritz} method without assumptions of fixed sign or compactness},
Mathematical notes of the Academy of Sciences of the USSR, 38(1985), 6, pp.~998--1002.
Kluwer Academic Publishers-Plenum Publishers.
\doi{10.1007/BF01157020}

\bibitem{k86}
{\sc Knyazev, A.~V.},
{\em Computation of eigenvalues and eigenvectors for mesh problems: algorithms and error estimates (In {R}ussian)},
Dept. Num. Math., USSR Ac. Sci. 188(1986), pp. 188. 
\url{http://math.ucdenver.edu/~aknyazev/research/papers/old/k.pdf}

\bibitem{k87}
{\sc Knyazev, A.~V.},
{\em Convergence rate estimates for iterative methods for a mesh symmetric eigenvalue problem},
2(1987), 5, pp.~371--396.
\doi{10.1515/rnam.1987.2.5.371}
    
\bibitem{knyazev97}
{\sc Knyazev, A.~V.},
{\em New estimates for {R}itz vectors},
              {Math. Comp.}, 66(1997), pp.~985--995.      
\doi{10.1090/S0025-5718-97-00855-7}.             

\bibitem{k2001}
{\sc Knyazev, A.~V.},
{\em Toward the Optimal Preconditioned Eigensolver: Locally Optimal Block Preconditioned Conjugate Gradient Method}, 
{SIAM Journal on Scientific Computing},
23(2001), 2, pp.~517--541.
\doi{10.1137/S1064827500366124}

\bibitem{ka02}
{\sc Knyazev, A.~V.  and  Argentati, M.~E.},
{\em Principal angles between subspaces in an {$A$}-based scalar
              product: algorithms and perturbation estimates},
              {SIAM J. Sci. Comput.}, 23(2002), 6, pp.~2008--2040.
\doi{10.1137/S1064827500377332}.     

\bibitem{ka06a}
{\sc  Knyazev, A.~V. and Argentati, M.~E.},
{\em On proximity of {R}ayleigh quotients for different vectors and {R}itz values generated by different trial subspaces},
{Linear algebra and its applications}, 415 (2006), 1, pp.~82--95.
\doi{10.1016/j.laa.2005.06.032}

\bibitem{ka06}
{\sc  Knyazev, A.~V. and Argentati, M.~E.},
{\em Majorization for changes in angles between subspaces, {R}itz values, and graph {L}aplacian spectra},
{SIAM J. Matrix Anal. Appl.}, 29(2006), 1, pp.~15--32. 
\doi{10.1137/060649070}.
             
\bibitem{ka10}
{\sc  Knyazev, A.~V. and Argentati, M.~E.},
{\em Rayleigh-{R}itz majorization error bounds with applications to
         {FEM}},
{SIAM J. Matrix Anal. Appl.}, 31(2010), 3, pp.~1521--1537. 
\doi{10.1137/08072574X}.
                                  
\bibitem{kja10}
{\sc  Knyazev, A.~V., Jujunashvili, A., and Argentati, M.~E.},
{\em Angles between infinite dimensional subspaces with applications to the Rayleigh-Ritz and alternating projectors methods},
{Journal of Functional Analysis}, 259 (2010), 6, pp.~1323--1345.
\doi{10.1016/j.jfa.2010.05.018}.

\bibitem{ko06}
{\sc  Knyazev, A.~V. and Osborn, J.~E.},
{\em New a priori {FEM} error estimates for eigenvalues},
{SIAM Journal Numerical Anal.}, 43 (2006), 6, pp.~2647--2667.
\doi{10.1137/040613044}    

\bibitem{kz16}
{\sc  Knyazev, A.~V. and Zhu,~P.},
{\em Rayleigh-Ritz majorization error bounds of the mixed type},
{eprint arXiv:1601.06146}, 2016. 
\url{https://arxiv.org/abs/1601.06146}

 \bibitem{LI1992249}
 {\sc Li, Ren-cang},
{\em On eigenvalues of a Rayleigh quotient matrix},	
{Linear Algebra and its Applications}, 169 (1992), pp.~249--255.
	\doi{10.1016/0024-3795(92)90181-9}.

\bibitem{lili05}	
{\sc Li, Chi-Kwong and Li, Ren-cang}, 
{\em A note on eigenvalues of perturbed Hermitian matrices}, 
{Linear Algebra and its Applications}, 395 (2005), 15, pp.~183--190.
\doi{10.1016/j.laa.2004.08.026}.                     
              
\bibitem{marshall79}
{\sc Marshall, A.~W. and Olkin, I. and Arnold, B.~C.},
{\em Inequalities: theory of majorization and its applications},
{Springer Series in Statistics},
{Second},
{Springer},
{New York}, 2011.
\doi{10.1007/978-0-387-68276-1}.

\bibitem{Mathias98}
{\sc Mathias, R.},
{\em Quadratic residual bounds for the {H}ermitian eigenvalue
              problem},
{SIAM J. Matrix Anal. Appl.},
19(1998),2, pp. 541--550 (electronic).
\doi{10.1137/S0895479896310536}.

\bibitem{Mol16}
{\sc Motovilov, A. K.},
{\em Alternative proof of the a priori $\tan\Theta$ theorem},
{Theoretical and Mathematical Physics},
186(2016), 1, pp.~83--92.
\doi{10.1134/S0040577916010074}.

\bibitem{Nakatsukasa12}
{\sc Nakatsukasa, Y.},
{\em The tan$\Theta$ theorem with relaxed conditions},
{Linear Algebra Appl.},
436(2012),2, pp. 1528--1534.
\doi{10.1016/j.laa.2011.08.038}.

\bibitem{ovtchinnikov62}
{\sc Ovtchinnikov, E.},
{\em Cluster robust error estimates for the {R}ayleigh-{R}itz
              approximation. {II}. {E}stimates for eigenvalues},
{Linear Algebra Appl.},
415(2006), pp.~188--209.
\doi{10.1016/j.laa.2005.06.041}.
              
\bibitem{parlett80}
{\sc Parlett, B. N.},
{\em The symmetric eigenvalue problem},
{Society for Industrial and Applied Mathematics (SIAM)},
{Philadelphia, PA}, 1998.
\doi{10.1137/1.9781611971163}.
 
\bibitem {Polya50}
{\sc Polya, G.},
{\em Remark on {W}eyl's note ``{I}nequalities between the two kinds
              of eigenvalues of a linear transformation.''},
{Proc. Nat. Acad. Sci. U. S. A.},
36(1950), pp.~49--51.
\url {http://www.ncbi.nlm.nih.gov/pmc/articles/PMC1063130/}.        
        
\bibitem {rn90}
{\sc Riesz, F. and Sz.-Nagy, B.}, 
{\em Functional Analysis}, Dover Publications, Inc., New York, NY, 1990. 
            
\bibitem{sunste}
{\sc Stewart, G. W. and Sun, J.},
{\em Matrix perturbation theory},
{Computer Science and Scientific Computing},
{Academic Press Inc.},
{Boston, MA}, 1990.
ISBN {0-12-670230-6}.

\bibitem{sun91}
{\sc Sun, J.},
{\em Eigenvalues of {R}ayleigh quotient matrices},
{Numer. Math.},
59(1991), pp.~603--614.
\doi{10.1007/BF01385798}.

% \bibitem{wlp15}
%{\sc Wu,Xianping, Li,Wen, and Peng,Xiaofei},
%{\em Some Refined Eigenvalue Perturbation Bounds for Two-by-Two Block {H}ermitian Matrices},
% East Asian Journal on Applied Mathematics, 
% 5 (2015), 2, pp.~126--137.
% \doi{10.4208/eajam.201114.150315a}.

\bibitem{Xie1997}
{\sc Xie, J.},
{\em A note on the {D}avis-{K}ahan $\sin\Theta$ theorem},
{Linear Algebra Appl.},
258(1997), pp.~129--135.
\doi{10.1016/S0024-3795(96)00207-8}.

\bibitem{xie97}
{\sc Xie, J.},
{\em Residual bounds for unitarily invariant norms on clustered
              eigenvalues},
{Linear Algebra Appl.},
252(1997), pp.~107--113.
\doi{10.1016/0024-3795(95)00677-X}.

\bibitem{zli15}
{\sc Zhang, LeiHong and Li, RenCang},
{\em Maximization of the sum of the trace ratio on the {Stiefel manifold, II: Computation}},
{Science China Mathematics},
58(2015), 7, pp.~1549--1566.
\doi{10.1007/s11425-014-4825-z}

\bibitem{zhu2013}
{\sc  Zhu, P. and Argentati, M. and  Knyazev, A.~V.},
{\em Bounds for the Rayleigh quotient and the spectrum of self-adjoint operators},
{{SIAM} J. Matrix Analysis Applications}, 34(2013), pp.~244--256.
 \doi{10.1137/120884468}.

\bibitem{ZhuK13}
 {\sc  Zhu, P. and
              Knyazev, A.~V. },
{\em Angles between subspaces and their tangents},
{J. Num. Math.},
21(2013), pp.~325--340.
  \doi{10.1515/jnum-2013-0013}.

\bibitem{zhu2012thesis}
{\sc  Zhu, P. },
{\em Angles between subspaces and the Rayleigh-Ritz method},
Ph.D thesis, University of Colorado Denver,
2012.
\url{http://math.ucdenver.edu/theses/Zhu_PhdThesis.pdf}.

\end{thebibliography}
\end{document}